
\documentclass[twocolumn,amsthm]{autart}%

\usepackage{todonotes}
\usepackage{color}
\usepackage{graphicx}
\usepackage{epsfig}
\usepackage{amsmath}
\usepackage{amssymb}
\usepackage{mathrsfs}
\usepackage{amsfonts}
\usepackage{dsfont}
\usepackage{graphicx}
\usepackage{mathrsfs}
\usepackage{wrapfig}

\usepackage[round]{natbib}%
\setcounter{MaxMatrixCols}{30}
\providecommand{\U}[1]{\protect\rule{.1in}{.1in}}
\providecommand{\U}[1]{\protect\rule{.1in}{.1in}}

\theoremstyle{definition}
\newtheorem{assumption}{Assumption}
\newtheorem{definition}{Definition}
\newtheorem{remark}{Remark}
\newtheorem{problem}{Problem}

\theoremstyle{plain}
\newtheorem{proposition}{Proposition}

\newtheorem{theorem}{Theorem}
\newtheorem{lemma}{Lemma}
\newtheorem{corollary}{Corollary}

\begin{document}
\begin{frontmatter}
\title{Optimal Time Trajectory and Coordination for Connected and Automated Vehicles\thanksref{footnoteinfo}} 
\thanks[footnoteinfo]{This research was supported in part by ARPAE's NEXTCAR program under the award number DE-AR0000796 and by the Delaware Energy Institute (DEI).}
\author[Paestum]{Andreas A. Malikopoulos}\ead{andreas@udel.edu},    
\author[Paestum]{Logan Beaver}\ead{lebeaver@udel.edu},               
\author[Paestum]{Ioannis Vasileios Chremos}\ead{ichremos@udel.edu}  
\address[Paestum]{Department of Mechanical Engineering, University of Delaware, 126 Spencer Lab, 130 Academy Street, Newark, DE, 19716, USA}  
\begin{keyword}                           
Connected and automated vehicles, cyber-physical systems, emerging mobility, decentralized optimal control, autonomous intersections, path planning.
\end{keyword}                             
\begin{abstract}                          
In this paper, we provide a decentralized theoretical framework for coordination of connected and automated vehicles (CAVs) at different traffic scenarios. The framework includes: (1) an upper-level optimization that yields for each CAV its optimal time trajectory and lane to pass through a given traffic scenario while alleviating congestion; and (2) a low-level optimization that yields for each CAV its optimal control input (acceleration/deceleration). We provide a complete, analytical solution of the low-level optimization problem that includes the rear-end, speed-dependent safety constraint. 
Furthermore, we provide a problem formulation for the upper-level optimization in which there is no duality gap. The latter implies that the optimal time trajectory for each CAV does not activate any of the state, control, and safety constraints of the low-level optimization, thus allowing for online implementation. Finally, we present a geometric duality framework with hyperplanes to derive the condition under which the optimal solution of the upper-level optimization always exists. We validate the effectiveness of the proposed theoretical framework through simulation.
\end{abstract}
\end{frontmatter}

\section{Introduction}\label{sec:1}

Emerging mobility systems, e.g., connected and automated vehicles (CAVs), shared mobility, provide the most intriguing opportunity for enabling users to better monitor transportation network conditions and make better operating decisions to improve safety and reduce pollution, energy consumption, and travel delays; see \citet{zhao2019enhanced}. Emerging mobility systems are typical cyber-physical systems where the cyber component (e.g., data and shared information through vehicle-to-vehicle and vehicle-to-infrastructure communication) can aim at optimally controlling the physical entities (e.g., CAVs, non-CAVs); see \citet{Cassandras2017}. The cyber-physical nature of such systems is associated with significant control challenges and gives rise to a new level of complexity in modeling and control; see \citet{Ferrara2018}. As we move to increasingly complex emerging mobility systems, new control approaches are needed to optimize the impact on system behavior of the interplay between vehicles at different traffic scenarios.  

\citet{Varaiya1993} provided the key features of an automated mobility system along with a control system architecture. An automated mobility system can alleviate congestion, reduce energy use and emissions, and improve safety by increasing significantly traffic flow as a result of closer packing of automatically controlled vehicles in platoons. Forming platoons of vehicles traveling at high speed was a popular system-level approach to address traffic congestion that gained momentum in the 1980s and 1990s; see \citet{Shladover1991, Rajamani2000}. Addressing string stability of platoons, see \citet{Kalle2017}, has been a technical challenge before demonstrating their significant benefit; see \citet{Kalle2015, Kalle2015a}. Ramp metering has been another common approach used to regulate the flow of vehicles merging into freeways to decrease traffic congestion; see \citet{Papageorgiou2002}. One of the very early efforts in this direction was proposed by \citet{Athans1969} for safe and efficient coordination of merging maneuvers with the intention of avoiding congestion. Assuming a given merging sequence, Athans formulated the merging problem as a linear optimal regulator, proposed by \citet{Levine1966}, to control a single string of vehicles, with the aim of minimizing the speed errors that will affect the desired headway between each consecutive pair of vehicles.  

\subsection{Related Work}\label{sec:1b}
In a typical commute, we encounter traffic scenarios that include merging at roadways and roundabouts, crossing intersections, cruising in congested traffic, passing through speed reduction zones, and lane-merging or passing maneuvers. These scenarios, along with the driver responses to various disturbances, contribute to traffic congestion. Several research efforts have been reported in the literature towards developing control algorithms for coordinating CAVs at  such  traffic scenarios to alleviate congestion. \citet{Dresner2004} proposed the use of the reservation scheme to control a single intersection of two roads with vehicles traveling with similar speed on a single direction on each road, i.e., no turns are allowed. In their approach, each vehicle is treated as a driver agent who requests the reservation of the space-time cells to cross the intersection at a particular time interval defined from the estimated arrival time to the intersection. 
Since then, numerous approaches have been reported in the literature to achieve safe and efficient control of traffic through intersections; see \citet{Dresner2008, DeLaFortelle2010}. Some efforts have proposed model predictive control that allows each vehicle to optimize its movement locally in a distributed manner; see \citet{Kim2014, Kloock:2019aa}. Other research efforts have employed scheduling theory based on which the vehicles can make a decision about the appropriate schedule of crossing an intersection; see \citet{Alonso2011, DeCampos2015a}. \citet{Colombo2014} constructed the invariant set for the control inputs that ensure lateral collision avoidance. There has been also some work focusing on multi-objective optimization problems for intersection coordination, mostly solved as a receding horizon control problem; see \citet{Kamal2013a, Kamal2014, Campos2014, Makarem2013, qian2015}. More recently, a study by \citet{Ratti2016} indicated that transitioning from intersections with traffic lights to autonomous intersections, where vehicles can coordinate and cross the intersection without the use of traffic lights, has the potential of doubling capacity and reducing delays.

In prior work, we presented a decentralized optimal control framework for coordinating online CAVs in different traffic scenarios, e.g., at merging roadways, intersections, adjacent intersections, speed reduction zones, roundabouts, and corridors; see \citet{Rios-Torres2, Malikopoulos2016a, Malikopoulos2017, Mahbub2019ACC, Malikopoulos2018c, mahbub2020decentralized}. 
The framework provides a closed-form analytical solution that exists under certain conditions, see \citet{Mahbub2020ACC-1}, and which, based on Hamiltonian analysis, yields for each CAV the optimal acceleration/deceleration at any time in the sense of minimizing fuel consumption. The solution allows the CAVs to coordinate and pass through these traffic scenarios without creating congestion and under the hard safety constraint of collision avoidance. Similar control approaches have considered passengers' comfort in addition to alleviating congestion; see \citet{Ntousakis:2016aa, Cassandras2019}.
A detailed discussion of the research efforts that have been reported in the literature to date in this area can be found in \citet{Malikopoulos2016a} and \citet{Guanetti2018}. 

\subsection{Contributions of This Paper}\label{sec:1c}
In this paper, we provide a decentralized theoretical framework for coordination of CAVs in different traffic scenarios that  include merging at roadways and roundabouts, crossing intersections, cruising in congested traffic, passing through speed reduction zones, and lane-merging or passing maneuvers. The framework includes a two-level joint optimization: (I) an upper-level optimization that yields for each CAV its optimal time trajectory and appropriate lane, to pass through a traffic scenario while alleviating congestion, and (II) a low-level optimization that yields for each CAV its optimal control input (acceleration/deceleration) subject to the state, control, and safety constraints.

The contributions of this paper are: (1) a complete, analytical solution of the low-level optimization problem that includes the rear-end safety constraint, where the safe distance is a function of speed; (2) a problem formulation for the upper-level optimization in which there is no duality gap, implying that the optimal time trajectory for  each CAV does not activate any of the state, control, and safety constraints of the low-level optimization, thus allowing for online implementation; (3) a geometric duality framework with hyperplanes to derive the condition under which the solution of the upper-level optimization always exists.
A limited-scope analysis of the low-level optimization was presented in \citet{malikopoulos2019ACC}, where we formulated the problem with the rear-end, speed-dependent safety constraint without providing the complete analysis and technical details related to the different possible activation of the constrained arcs though. A preliminary formulation of the upper-level optimization was discussed in \citet{Malikopoulos2019CDC}, where we introduced the idea of deriving the optimal time trajectories of CAVs.

The proposed framework advances the state of the art in the following ways. First, in contrast to other efforts reported in the literature, see \citet{Rios-Torres2, Ntousakis:2016aa, Malikopoulos2017, Mahbub2019ACC, Malikopoulos2018c}, where either the safety constraint was not considered,
or considered using a constant safety distance, see \citet{Cassandras2019}, in our framework, the low-level analytical solution considers the safety distance between the CAVs to be a function of speed leading to a complicated, yet very interesting, analysis. Moreover, we augment the double integrator model representing a CAV with an additional state corresponding to the distance from its preceding CAV, thus we are able to address the lateral collision constraint in the low-level optimization. Second, in several efforts reported in the literature to date, the upper-level optimization either (a) was implemented with centralized approaches; see \citet{Dresner2008, DeLaFortelle2010, Alonso2011, DeCampos2015a}; or (b) was considered known through a given protocol \citet{Ntousakis:2016aa, malikopoulos2019ACC}; or (c) was implemented using a strict first-in-first-out queueing structure; see \citet{Rios-Torres2, Malikopoulos2017, Malikopoulos2018c, Cassandras2019, Azimi:2014aa}. In our proposed framework, the upper-level optimization yields, in a decentralized fashion, the optimal time for each CAV to pass a given traffic scenario along with the appropriate lane that needs to occupy. Finally, in contrast to the research efforts reported in the literature to date, the solution of the upper-level optimization allows CAVs to change lanes.

\subsection{Organization of This Paper}\label{sec:1e}
The structure of the paper is organized as follows. In Section 2, we provide the modeling framework and our assumptions. In Section 3, we formulate the low-level optimization problem and derive the analytical solution. In Section 4, we formulate the upper-level optimization problem and prove that it imposes no duality gap. In Section 5, we validate the effectiveness of the proposed theoretical framework through simulation. Finally, we provide concluding remarks and discuss potential directions for future research in Section 6.

\section{Modeling Framework}\label{sec:2}
Although the theoretical framework presented in this paper can be applied to any  traffic scenario, e.g.,  merging at roadways and roundabouts, cruising in congested traffic, passing through speed reduction zones, and lane-merging or passing maneuvers, we use an intersection as a reference to provide the fundamental ideas and results. This is because an intersection has  unique features which makes it technically more challenging compared to other traffic scenarios. However, our analysis and results can be applied to other traffic scenarios too.

We consider CAVs at a 100\% penetration rate crossing a signal-free intersection (Fig. \ref{fig:1}). The region at the center of the intersection, called \textit{merging zone}, is the area of potential lateral collision of  CAVs. The intersection has a \textit{control zone} inside of which the CAVs can communicate with each other and with a \textit{crossing protocol}. The  crossing protocol, defined formally in the next subsection, stores the CAVs' time trajectories from the time they enter until the time they exit the control zone. The distance from the entry of  the control zone until the entry of the merging zone is $S_c$ and, although it is not restrictive, we consider to be the same for all entry points of the control zone. We also consider the merging zone to be a square of side $S_m$ (Fig. \ref{fig:1}). Note that  $S_c$ could be in the order of hundreds of meters depending on the CAVs' communication range capability, while $S_m$ is the length of a typical intersection. The CAVs crossing the intersection can also make a right turn of radius $R_r$, or a left turn of radius $R_l$ (Fig. \ref{fig:1}). The aforementioned values of the intersection's geometry are not restrictive in our modeling framework, and are used only to determine the total distance traveled by each CAV inside the control zone.

\begin{figure}
	\centering
	\includegraphics[width=3 in]{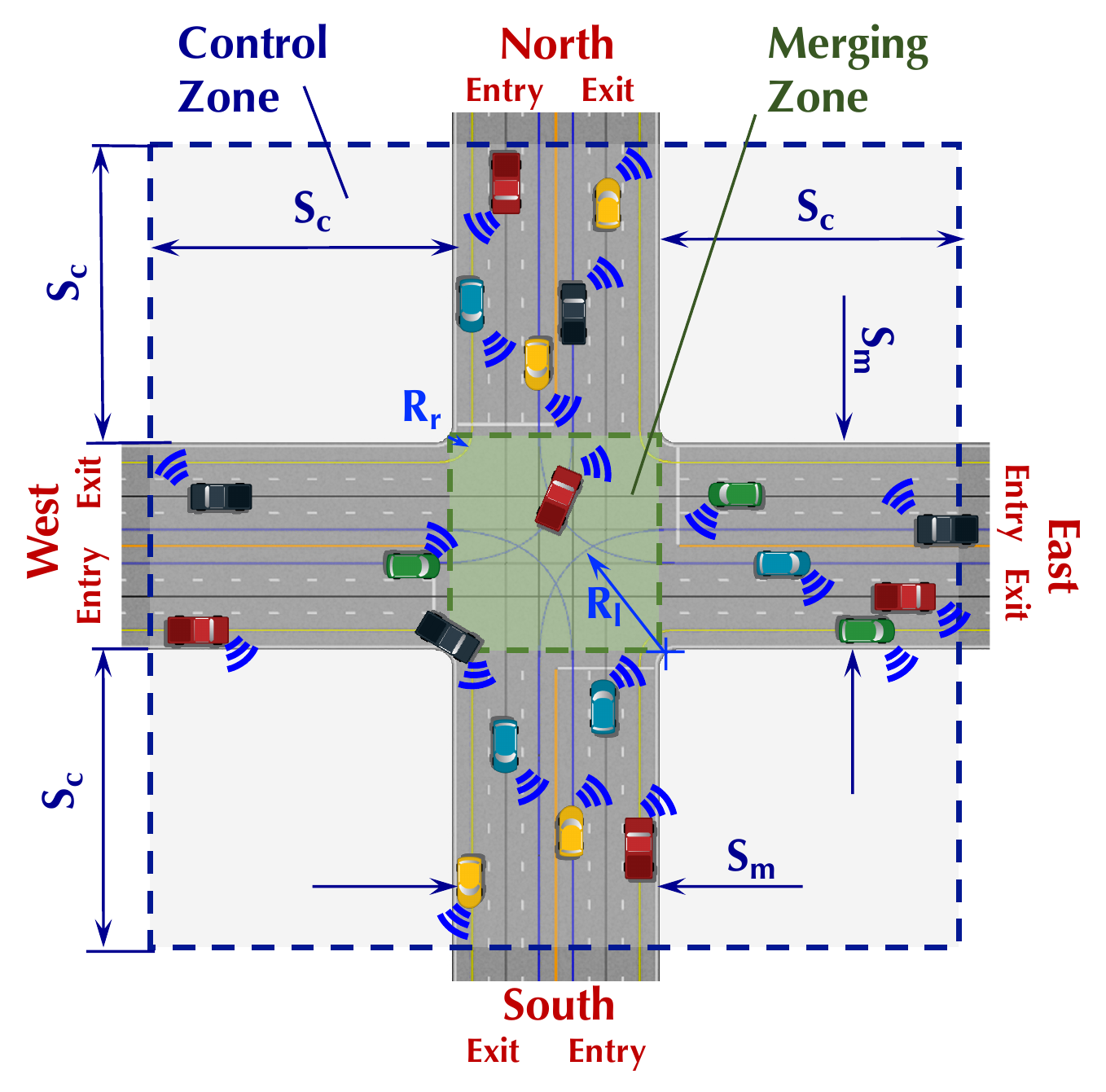} 
	\caption{A signal-free intersection with connected and automated vehicles.}%
	\label{fig:1}%
\end{figure}

Let $\mathcal{N}(t)=\{1,\ldots,N(t)\}$, $N(t)\in\mathbb{N}$, be the set of CAVs inside the control zone at time $t\in\mathbb{R}^{+}$. Let $t_{i}^{f}$ be the time for CAV $i$ to exit the control zone.
There is a number of ways to determine $t_{i}^{f}$ for each CAV $i$. For example, we may
impose a strict first-in-first-out queuing structure, see \citet{Rios-Torres2, Malikopoulos2017, Malikopoulos2018c, Cassandras2019}, where each CAV must
enter the merging zone in the same order it entered the control zone. The policy, which  determines the time $t_{i}^{f}$ that each CAV $i$ exits the control zone,  is the
result of an upper-level optimization problem which can aim at maximizing the throughput at the intersection. On the other hand, deriving the optimal control input (minimum acceleration/deceleration) for each CAV $i\in\mathcal{N}(t)$ from the time $t_{i}^{0}$ it enters the control zone until the target $t_{i}^{f}$ is the result of a low-level optimization problem that can aim at minimizing the energy of each individual CAV.

In what follows, we present a two-level, joint optimization framework: (1) an upper-level optimization that yields for each CAV $i\in\mathcal{N}(t)$, with a given origin (entry of the control zone) and desired destination (exit of the control zone), (a) the minimum time $t_{i}^{f}$ to exit the control zone and (b) optimal path including the lanes that each CAV should be occupying while traveling inside the control zone; and (2) a low-level optimization that yields, for CAV $i\in\mathcal{N}(t)$, its optimal control input (acceleration/deceleration) to achieve the optimal path and $t_{i}^{f}$ derived in (1) subject to the state, control, and safety constraints.
The two-level optimization framework is executed by each CAV $i\in\mathcal{N}(t)$ as follows. When a CAV $i$ enters the control zone at $t_{i}^{0}$, it accesses the crossing protocol that includes the time trajectories, defined formally in the next subsection, of each CAV cruising inside the control zone. If two, or more, CAVs enter the control zone simultaneously, then the crossing protocol decides arbitrarily the sequence that each of these CAVs will get access to the information.  Then, the CAV solves the upper-level optimization problem and derives the minimum time $t_{i}^{f}$ to exit the control zone along with the appropriate lanes that should occupy. The minimum time $t_{i}^{f}$ of the upper-level optimization problem is the input of the low-level optimization problem. 

The implications of the proposed optimization framework are that CAVs do not have to come to a full stop at the intersection, thereby conserving momentum and fuel while also improving travel time. Moreover, by optimizing each CAV's acceleration/deceleration, we minimize transient engine operation, and thus we have additional benefits in fuel consumption. In our analysis, we consider that each CAV $i\in\mathcal{N}(t)$ is governed by the following dynamics
\begin{align}\label{eq:model2}
\dot{p}_{i}(t) &  =v_{i}(t), \nonumber\\
\dot{v}_{i}(t) &  =u_{i}(t), \\
\dot{s}_{i}(t) &  = \xi_i \cdot (v_{k}(t)-v_{i}(t)), \quad t\in[t_{i}^{0}, t_{i}^{f}], \nonumber
\end{align}
where $t_i^0$ and $t_i^f$ correspond to the times that CAV $i$ enters and exits the control zone, respectively; $p_{i}(t)\in\mathcal{P}_{i}$ is the position of each CAV $i$ from the entry until the exit of the control zone at $t$; $v_{i}(t)\in\mathcal{V}_{i}$ and $u_{i}(t)\in\mathcal{U}_{i}$ are the speed and
acceleration/deceleration (control input) of each CAV $i$ inside the control zone at $t$; ~$s_{i}(t)\in\mathcal{S}_{i}$ denotes the distance of CAV $i$ from CAV $k$ which is physically located  ahead of $i$ (e.g., $k$ either cruising on the same lane as $i$, or crossing the merging zone and can cause lateral collision with $i$ -- in the latter we have $\dot{s}_{i}(t) = -\xi_i \cdot v_{i}(t)$), and $\xi_{i}$ is a reaction constant of CAV $i$. The sets $\mathcal{P}_{i}, \mathcal{V}_{i}, ~\mathcal{U}_{i}$, and $\mathcal{S}_{i}$, $i\in\mathcal{N}(t)$, are complete and totally bounded subsets of $\mathbb{R}$. Let $x_{i}(t)=\left[p_{i}(t) ~ v_{i}(t) ~ s_{i}(t)\right]  ^{T}$ denote the state of each CAV $i$ taking values in $\mathcal{X}_{i}%
=\mathcal{P}_{i}\times\mathcal{V}_{i}\times\mathcal{S}_{i}$, with initial value
$x_{i}(t_{i}^{0})=x_{i}^{0}=\left[p_{i}^{0} ~ v_{i}^{0} ~s_{i}^{0}\right]  ^{T}$, where $p_{i}^{0}= p_{i}(t_{i}^{0})=0$, $v_{i}^{0}= v_{i}(t_{i}^{0})$, and $s_{i}^{0}= s_{i}(t_{i}^{0})$ at the entry of the control zone.  The state space 
$\mathcal{X}_{i}$ for each CAV $i$ is
closed with respect to the induced topology on $\mathcal{P}_{i}\times
\mathcal{V}_{i}\times\mathcal{S}_{i}$ and thus, it is compact.

To ensure that the control input and CAV speed are within a
given admissible range,  we impose the following constraints
\begin{gather}%
u_{i,\min}  \leq u_{i}(t)\leq u_{i,\max}, \label{speed_accel constraints}  \quad\text{and}\\
0 < v_{\min}\leq v_{i}(t)\leq v_{\max},\label{speed}\quad\forall t\in\lbrack t_{i}%
^{0},t_{i}^{f}],
\end{gather}
where $u_{i,\min}$, $u_{i,\max}$ are the minimum  and maximum control input for each CAV $i\in\mathcal{N}(t)$, and $v_{\min}$, $v_{\max}$ are the minimum and maximum speed limits, respectively.
To ensure the absence of rear-end collision of two CAVs traveling on the same lane,  the position of the preceding CAV should be greater than, or equal to the position of the following CAV plus a minimum safe distance $\delta_i(t)$, which is a function of speed $v_i(t)$, i.e., $\delta_i(t)=\bar{\delta} + \rho_i \cdot v_i(t)$, where $\bar{\delta}$ is the standstill distance and $\rho_i$ is the minimum headway that CAV $i$  maintains while following the preceding CAV. 
The rear-end safety constraint is
\begin{equation}
\begin{split}
s_{i}(t)=\xi_i \cdot (p_{k}(t)-p_{i}(t)) \ge \delta_i(t) ,~ t\in [t_i^0, t_i^f],
\label{eq:rearend}
\end{split}
\end{equation}
where $k\in\mathcal{N}(t)$ is some CAV which is physically ahead of $i$. 
Similarly, a lateral collision inside the merging zone can occur between CAV $i$ and a CAV $k\in\mathcal{N}(t)$ which crosses the zone from a different direction than $i$. In this case, \eqref{eq:rearend} becomes 
\begin{equation}
\begin{split}
s_{i}(t)=\xi_i \cdot (p_{k,i}-p_{i}(t)) \ge \delta_i(t),~  t\in [t_i^0, t_i^e],
\label{eq:lateral}
\end{split}
\end{equation}
where $p_{k,i}$ is the (constant) distance of CAV $k$ from the entry point that CAV $i$ entered the control zone, and $t_i^e$ is the time that  CAV $i$ exits the merging zone.

\begin{definition}	\label{def:lanes}
	The set of all lanes within the control zone is 
	 $\mathcal{L}:=\{1,\dots,M\}, M\in\mathbb{N}$.
\end{definition}
Note that the length of each lane $\theta\in\mathcal{L}$ is $2S_c+S_m$  (Fig. \ref{fig:1}).

\begin{definition} \label{def:lane_time}
	For each CAV $i\in\mathcal{N}(t)$, we define the function $l_i(t): [t_i^0, t_i^f]\to\mathcal{L}$ which yields the lane $\theta\in\mathcal{L}$ that $i$ occupies at time $t$.
\end{definition}


\begin{definition} \label{def:cardinal}
	The cardinal points that each CAV $i\in\mathcal{N}(t)$ enters and exits the control zone is denoted by $o_i$.
\end{definition}
For instance, based on Definition \ref{def:cardinal}, for a CAV $i$ that enters the control zone from the West entry  (Fig. \ref{fig:1}) and exits the control zone from the South, we have $o_i=(W,S)$.

\begin{definition} \label{def:conflict}
	For each CAV $i\in\mathcal{N}(t)$, the set $C_{o_i}$ includes the lanes in $\mathcal{L}$ that can be used on a given $o_i$, i.e.,
		\begin{gather}\label{eq:protocol}
			C_{o_i}:=\Big\{\theta\in\mathcal{L} ~|~ o_i ~\text{is feasible,}~ \forall i\in\mathcal{N}(t) \Big\}.
	   \end{gather}
\end{definition}

\begin{definition}\label{def:occup}
	The \textit{occupancy set}, $O_{\theta}$, of each lane $\theta\in\mathcal{L}$ includes the time intervals that $\theta$ is occupied, i.e.,
	\begin{gather}\label{eq:protocol}
	O_{\theta}:=\Big\{[t_i^{n_1}, t_i^{n_2}]\subset\mathbb{R}_{\ge 0}, \forall i\in\mathcal{N}(t), ~ t_i^{n_1}, t_i^{n_2}\in [t_i^0, t_i^f], \nonumber\\ n_1, n_2\in\mathbb{N}, n_2>n_1~|~ l_i(t)=\theta, \forall t\in [t_i^{n_1}, t_i^{n_2}]\Big\}.
	\end{gather}
\end{definition}

\begin{definition}\label{def:path}
	For each CAV $i\in\mathcal{N}(t)$, the function $t_{p_i}\big(p_i\big):\mathcal{P}_i\to[t_i^0, t_i^f]$, is the \textit{time trajectory} of $i$, which yields the time that $i$ is at the position $p_i$ inside the control zone. 
\end{definition}

\begin{definition}\label{def:protocol}
    The \textit{crossing protocol} is denoted by $\mathcal{I}(t)$ and includes the following information
	\begin{gather}\label{eq:protocol}
		\mathcal{I}(t):=\{t_{p_i}(p_i),o_i, C_{o_i}, O_{\theta}, t_i^0, t_i^f\}, \nonumber\\
		\forall i\in\mathcal{N}(t), ~\forall l\in\mathcal{L},~ t\in[t_i^0, t_i^f].
	\end{gather}
\end{definition}


\begin{remark} \label{ass:feas}
	 The CAVs traveling inside the control zone can change lanes either (1) in the lateral direction (e.g., move to a neighbor lane), or (2) when making a right (or a left) turn inside the merging zone. In the former case, when a CAV changes lane, it travels along the hypotenuse $dy$ of a triangle created by the width of the lane and the longitudinal displacement $dp$ if it had not changed lane. Thus, in this case, the CAV travels an additional distance which is equal to the difference between the hypotenuse $dy$ and the longitudinal displacement $dp$, i.e., $dy-dp$.
\end{remark}


\begin{remark} \label{ass:feas}
	 When a CAV is about to make a right turn it must occupy the right lane of the road before it enters the merging zone. Similarly, when a CAV is about to make a left turn it must occupy the left lane before it enters the merging zone.
\end{remark}

In the modeling framework presented above, we impose the following assumptions:


\begin{assumption} \label{ass:lane} 
	The CAV's additional distance traveled when it changes neighbor lanes is neglected. 
\end{assumption}


\begin{assumption} \label{ass:noise}
	 Each CAV $i\in\mathcal{N}(t)$ has proximity sensors and can communicate with other CAVs and the crossing protocol without any errors or delays.
\end{assumption}


\begin{assumption} \label{ass:active}
	For each CAV $i$, none of the constraints  \eqref{speed}-\eqref{eq:lateral} is active at $t_i^0$.
\end{assumption}

The first assumption can be justified by the general observation that the additional distance traveled by a CAV when it changes neighbor lanes is very small compared to the total distance traveled within the control zone. However, by including a two-dimensional vehicle model in our analysis, this additional distance could be taken into account, and thus we believe that this assumption does not provide any restrictions in our exposition. The second assumption may be strong, but it is relatively straightforward to relax as long as the noise in the communication, measurements, and/or delays is bounded. In this case, we can determine upper bounds on the state uncertainties as a result of sensing or communication errors and delays, and incorporate these into more conservative safety constraints. Finally, the last assumption  ensures that the initial state is feasible. This is a reasonable assumption since  CAVs are automated, and so there is no compelling reason for them to activate any of the constraints by the time they enter the control zone.

When each CAV $i$, with a given $o_i$, i.e., a cardinal entry of the control zone and a desired cardinal destination (exit of the control zone), enters the control zone, it accesses the  crossing protocol and solves two optimization problems: (1) an upper-level optimization problem, the solution of which yields its time trajectory $t_{p_i}(p_i)$ and the minimum time $t_{i}^{f}$ to exit the control zone; and (2) a low-level optimization problem the solution of which yields its optimal control input (acceleration/deceleration) to achieve the optimal path and $t_{i}^{f}$ derived in (1) subject to the state, control, and safety constraints.

We start our exposition with the low-level optimization problem and then we discuss the upper-level problem.

\section{Low-Level Optimization}\label{sec:3}
In this section, we consider that the solution of the upper-level optimization problem is given, and thus, the minimum time $t_{i}^{f}$ for each CAV $i\in\mathcal{N}(t)$ is known. We focus on a low-level optimization problem that yields for each CAV $i$ the optimal control input (acceleration/deceleration) to achieve the minimum time $t_{i}^{f}$ subject to the state, control, and safety constraints.

\begin{problem} \label{problem1}
If $t_{i}^{f}$ is determined, the low-level optimal control problem for each CAV $i\in\mathcal{N}(t)$ is to minimize the cost functional $J_{i}(u(t))$, which is the $L^2$-norm of the control input in $[t_i^0, t_i^f]$, i.e.,
\begin{gather}\label{eq:decentral}
\min_{u_i(t)\in \mathcal{U}_i} J_{i}(u_i(t)) = \frac{1}{2} \int_{t^0_i}^{t^f_i} u^2_i(t)~dt,\\ 
\text{subject to:} ~\eqref{eq:model2},\eqref{speed_accel constraints},\eqref{speed}, \eqref{eq:rearend}, \eqref{eq:lateral},\nonumber\\
\text{and given }t_{i}^{0}\text{, }v_{i}^{0}\text{, } p_{i}(t_{i}^{0})\text{, }t_{i}^{f}\text{,
}p_{i}(t_{i}^{f}),\nonumber
\end{gather}
where $p_{i}(t_{i}^{0})=0$, while $p_{i}(t_{i}^{f})=p_i^f$, depends on $o_i$ and, based on Assumption \ref{ass:lane}, can take the following values (Fig. \ref{fig:1}): (1) $p_i^f=2 S_c + S_m$, if CAV $i$ crosses the merging zone, (2) $p_i^f=2 S_c + \frac{\pi R_r}{2}$, if  CAV $i$ makes a right turn at the merging zone, and (3) $p_i^f=2 S_c + \frac{\pi R_l}{2}$, if  CAV $i$ makes a left turn at the merging zone. By minimizing the $L^2$-norm of the control input (acceleration/deceleration), essentially we minimize transient engine, if the CAV is a conventional vehicle, and thus we have direct benefits in fuel consumption; see \cite{Malikopoulos2008b}.
\end{problem}

Let $S_i(t,x_i(t))$ be the vector of the constraints in Problem 1 which does not explicitly depend on $u_i(t)$, see \citet{Bryson:1963}, i.e.,
\begin{gather} 
S_{i}\big(t,x_i(t)\big) = 
\begin{bmatrix}
v_{i}(t) - v_{\max} \\
v_{\min} - v_{i}(t) \\
\bar{\delta} +\rho_i\cdot v_i(t) - \xi_i\cdot\big(p_k(t) - p_i(t)\Big)
\end{bmatrix}. \label{eq:gNoU}
\end{gather}
We take successive total time derivatives of \eqref{eq:gNoU} until we obtain an expression that is explicitly dependent on $u_i(t)$. If $n\in\mathbb{N}$ time derivatives are required, then the $n$th total time derivative of $S_i(t,x_i(t))$ becomes the arc constraint in our analysis in $t\in [t_i^0, t_i^f]$, while the remaining $n-1$ components of $S_i(t,x_i(t))$ constitute a boundary condition  at the entry (or exit) of the constrained arc. Since $\dot{S}_i(t,x_i(t))$ is an explicit function of $u_i(t)$,  the Hamiltonian for Problem \ref{problem1} is

\begin{gather}
H_{i}\big(t, p_{i}(t), v_{i}(t), s_{i}(t), u_{i}(t)\big)  \nonumber \\
=\frac{1}{2} u_i(t)^{2} + \lambda^{p}_{i} \cdot v_{i}(t) + \lambda^{v}_{i} \cdot u_{i}(t) +\lambda^{s}_{i} \cdot \xi_i \cdot (v_{k}(t) - v_{i}(t)) \nonumber\\
+ \mu^{a}_{i} \cdot(u_{i}(t) - u_{\max})+ \mu^{b}_{i} \cdot(u_{\min} 
- u_{i}(t)) + \mu^{c}_{i} \cdot  u_{i}(t) \nonumber\\ - \mu^{d}_{i} \cdot u_{i}(t)  
+ \mu^{s}_{i} \cdot \big(\rho_i \cdot u_i(t) - \xi_i\big(v_{k}(t) - v_i(t)\big)\big) ,\label{eq:16b}
\end{gather}
with $S_i(t_1,x_i(t_1))=0$, at the entry $t_1 \in [t_i^0, t_i^f]$ of the constrained arc; $\lambda^{p}_{i}$, $\lambda^{v}_{i}$, and $\lambda^{s}_{i}$ are the influence functions, see \citet{Bryson:1963}, and $\mu_i=[\mu^{a}_{i}~\mu^{b}_{i}~\mu^{c}_{i}~\mu^{d}_{i}~\mu^{s}_{i}] ^ T$ is the vector of the Lagrange multipliers with

\begin{equation}\label{eq:17a}
\mu^{a}_{i} = \left\{
\begin{array}
[c]{ll}%
>0, & \text{if}~\mbox{$u_{i}(t) - u_{\max} =0$},\\
=0, & \text{if}~\mbox{$u_{i}(t) - u_{\max} <0$}, 
\end{array}
\right.
\end{equation}
\begin{equation}\label{eq:17b}
\mu^{b}_{i} = \left\{
\begin{array}
[c]{ll}%
>0, & \text{if}~ \mbox{$u_{\min} - u_{i}(t) =0$},\\
=0, & \text{if}~\mbox{$u_{\min} - u_{i}(t)<0$},
\end{array}
\right.
\end{equation}
\begin{equation}\label{eq:17c}
\mu^{c}_{i} = \left\{
\begin{array}
[c]{ll}%
>0, & \text{if}~\mbox{$u_{i}(t) = 0$},\\
=0, & \text{if}~\mbox{$u_{i}(t) < 0$},
\end{array}
\right.
\end{equation}
\begin{equation}\label{eq:17d}
\mu^{d}_{i} = \left\{
\begin{array}
[c]{ll}%
>0, & \text{if}~\mbox{$-u_{i}(t)=0$},\\
=0, & \text{if}~\mbox{$-u_{i}(t)<0$},
\end{array}
\right.
\end{equation}
\begin{equation}\label{eq:17e}
\mu^{s}_{i} = \left\{
\begin{array}
[c]{ll}%
>0, & \text{if}~\mbox{$\rho_i\cdot u_i(t) - \xi_i\big(v_k(t)-v_i(t)\big) =0$},\\
=0, & \text{if}~\mbox{$\rho_i\cdot u_i(t) - \xi_i\big(v_k(t)-v_i(t)\big)<0$}.
\end{array}
\right.
\end{equation}
For each $i\in\mathcal{N}(t)$, the Euler-Lagrange equations are
\begin{gather}\label{eq:EL1}
\dot\lambda^{p}_{i}(t) = - \frac{\partial H_i}{\partial p_{i}} = 0, \\
\dot\lambda^{v}_{i}(t) = - \frac{\partial H_i}{\partial v_{i}} = - (\lambda^{p}_{i} - \lambda^{s}_{i} \cdot \xi_i + \mu^s_i \cdot \xi_i),  \label{eq:EL2}\\
\dot\lambda^{s}_{i}(t) = - \frac{\partial H_i}{\partial s_{i}} = 0, \label{eq:EL3}%
\end{gather}
\begin{equation}
\label{eq:KKT1}
\frac{\partial H_i}{\partial u_{i}} = u_{i}(t) + \lambda
^{v}_{i} + \mu^{a}_{i} - \mu^{b}_{i} + \mu^c_i - \mu^d_i + \mu^s_i \rho_i = 0,
\end{equation}
with boundary conditions 
\begin{gather}
p_i(t_i^0) = p_i^0, ~ v_i(t_i^0) = v_i^0, ~ s_i(t_i^0) = s_i^0,\notag \\
p_i(t_i^f) = p_i^f, ~ \lambda^v_i(t_i^f) = 0, ~ \lambda^s_i(t_i^f) = 0,
\label{eq:bound}
\end{gather}
where $\lambda^v_i(t_i^f)=\lambda^s_i(t_i^f) = 0$ since the states $v_i(t_i^f)$ and $s_i(t_i^f)$ are not prescribed at $t_i^f$, see \citet{Bryson:1975}.
From \eqref{eq:EL1} and \eqref{eq:EL3}, we have $\lambda^p_i(t) = \alpha_i$ and $\lambda^s_i(t) = \beta_i$, where $\alpha_i$ and $\beta_i$ are constants of integration. 

To address this problem, constrained and unconstrained arcs are pieced together to satisfy the Euler-Lagrange equations. The optimal solution is the result of different combinations of the following possible arcs.


\subsection{State and Control Constraints are not Active}
\label{sec:3a}
In this case, we have 
$\mu^{a}_{i} = \mu^{b}_{i}= \mu^{c}_{i}=\mu^{d}_{i}=\mu^{s}_{i}=0$.
From \eqref{eq:KKT1}, the optimal control is 
\begin{equation}
u_{i}^*(t) + \lambda^{v}_{i}= 0, \quad i \in\mathcal{N}(t). \label{eq:17}
\end{equation}
Since $\lambda^{p}_{i}(t) = \alpha_{i}$,   
$\lambda^{s}_{i}(t)= \beta_{i}$, setting $a_i=\alpha_i - \beta_i\xi_i$, from \eqref{eq:EL2} we have
\begin{equation} \label{eq:LviU}
\lambda^{v}_{i}(t) = -\big(a_i\cdot t + c_{i}\big),
\end{equation} 
where $c_{i}$ is a constant of integration for each $i\in\mathcal{N}(t)$. Thus, from \eqref{eq:17}
the optimal control input (acceleration/deceleration) is given by
\begin{equation}
u^{*}_{i}(t) = a_i\cdot t + c_{i}, ~ t \in [t^{0}_{i}, t_i^f]. \label{eq:20}
\end{equation}
Substituting the last equation into \eqref{eq:model2}, we derive the optimal speed and position for each $i\in\mathcal{N}(t)$,
namely
\begin{gather}
v^{*}_{i}(t) = \frac{1}{2} a_i \cdot t^2 + c_{i} \cdot t +d_{i}, ~ t \in [t^{0}_{i}, t_i^f], \label{eq:21}\\
p^{*}_{i}(t) = \frac{1}{6} a_i \cdot t^3 +\frac{1}{2} c_{i} \cdot t^2 + d_{i}\cdot t +e_{i}, ~ t \in [t^{0}_{i}, t_i^f], \label{eq:22}%
\end{gather}
where $d_{i}$ and $e_{i}$ are constants of integration. The constants of integration $a_i$, $c_{i}$, $d_{i}$, and $e_{i}$ can be computed using the boundary conditions \eqref{eq:bound}.


\subsection{The State $s_{i}(t)$ Constraint  Becomes Active}
\label{sec:3b}
Suppose CAV $i\in\mathcal{N}(t)$ starts from a feasible state and control at $t=t_{i}^0$, and at some time $t=t_{1}\le t_i^f$, $s_{i}(t_{1})=\delta(t_{1})$, while $v_{\min} < v_{i}(t_{1}) < v_{\max}$ and $u_{i,\min} < u_{i}(t_{1}) < u_{i,\max}$. In this case, $\mu_{i}^{s} \neq 0$. 
Let $N_i(t,x_i(t))= \bar{\delta} + \rho_i v_i^*(t) - \xi_ip_k^*(t)+\xi_ip_i^*(t)$. Then, we have
\begin{equation} \label{eq:delta1}
N_i(t_1,x_i(t_1))=  \bar{\delta} + \rho_i v_i^*(t_1) - \xi_i p_k^*(t_1) + \xi_i p_i^*(t_1) = 0,
\end{equation}
which represents a tangency constraint for the state $s_i(t)$ in $t\in [t_1, t_2]$, where $t_2$ is the exit point of the constrained arc $s_{i}(t)-\delta(t)\le 0$.
Since $N_i(t_1,x_i(t_1))=0$, then $\dot{N_i}(t_1,x_i(t_1))=0$,
hence, the value of the optimal control at $t=t_1^+$ is given by
\begin{equation} \label{eq:delta3}
u_i^*(t_1^+) = \frac{ \xi_i (v_k^*(t_1^+) - v_i^*(t_1^+))}{\rho_i}.
\end{equation}

From \eqref{eq:delta3}, we note that the optimal control input may not be continuous at $t_1$, hence the junction point at $t_1$ may be a corner; see \citet{Bryson:1975}. The interior boundary conditions at $t_1$ for the influence functions are

\begin{align}
\lambda_i^p(t_1^-) &= \lambda_i^p(t_1^+) + \pi_i \frac{\partial{N_i(t_1,x_i(t_1))}}{\partial p_i} = \lambda_i^p(t_1^+) + \pi_i  \xi_i, \label{eq:L1relate} \\
\lambda_i^v(t_1^-) &= \lambda_i^v(t_1^+) + \pi_i \frac{\partial{N_i(t_1,x_i(t_1))}}{\partial v_i} = \lambda_i^v(t_1^+) + \pi_i  \rho_i, \label{eq:L2relate} \\
\lambda_i^s(t_1^-) &= \lambda_i^s(t_1^+) + \pi_i  \frac{\partial{N_i(t_1,x_i(t_1))}}{\partial s_i}  = \lambda_i^s(t_1^+)-\pi_i. \label{eq:L3relate}
\end{align}
The Hamiltonian at $t_1$ is
\begin{equation}
H_i(t_1^-)=H_i(t_1^+)- \pi_i \frac{\partial{N_i(t_1,x_i(t_1))}}{\partial t_1},
\end{equation}
or 
\begin{gather}
\frac{1}{2}u_i^2(t_1^-)+ \lambda_i^p(t_1^-) v_i(t_1^-) + \lambda_i^v(t_1^-) u_i(t_1^-) \nonumber\\
+ \lambda_i^s(t_1^-) \xi_i (v_k(t_1^-)-v_i(t_1^-)) 
= \frac{1}{2}u_i^2(t_1^+)+ \lambda_i^p(t_1^+) v_i(t_1^+) \nonumber\\
+ \lambda_i^v(t_1^+) u_i(t_1^+) + \lambda_i^s(t_1^+) \xi_i (v_k(t_1^+)-v_i(t_1^+)) 
+ \pi_i \xi_i v_k(t_1), 
\label{eq:trans}
\end{gather}
where $\pi_i$ is a Lagrange multiplier constant. The influence functions, $\boldsymbol\lambda_i^T(t_1^+)=[\lambda_i^p(t_1^+) ~\lambda_i^v(t_1^+) ~\lambda_i^s(t_1^+)]^T$ at $t_1^+$, the time  $t_1$, and the Lagrange multiplier $\pi_i$ constitute $3+1+1$ quantities that are determined so as to satisfy \eqref{eq:delta1}, \eqref{eq:L1relate} - \eqref{eq:L3relate} and \eqref{eq:trans}. The unconstrained and constrained arcs are pieced together to determine the $3+1+1$ quantities above along with the constants of integration in \eqref{eq:20}-\eqref{eq:22}.

Since \eqref{eq:delta1} holds for all $t\in[t_1,t_2]$, where $t_2\le t_i^f$ is the exit point of the  constrained arc $\delta_i(t)-s_i(t) \le 0$, the optimal control of CAV $i\in\mathcal{N}(t)$ is
\begin{equation} \label{eq:delta4}
u_i^*(t^+) = \frac{ \xi_i (v_k^*(t^-) - v_i^*(t^-))}{\rho_i}, ~t\in[t_1,t_2].
\end{equation}
\begin{remark} \label{rem:delta_vk1}
	The exit point $t_2$ of the constrained arc, $\delta_i(t)-s_i(t) \le 0$, can either lead to the unconstrained arc or to other constrained arcs.
\end{remark}
If the exit point $t_2$ leads to the unconstrained arc, then for all  $t\in[t_2, t_i^f]$, we have a set of equations as in \eqref{eq:20} - \eqref{eq:22} for the optimal control, speed, and position of CAV $i$, i.e.,  $u^{*}_{i}(t) = a_i'\cdot t + c_{i}'$, $v^{*}_{i}(t) = \frac{1}{2} a_i' \cdot t^2 + c_{i}' \cdot t +d_{i}'$, and $p^{*}_{i}(t) = \frac{1}{6} a_i' \cdot t^3 +\frac{1}{2} c_{i}' \cdot t^2 + d_{i}'\cdot t +e_{i}'$, where $a_i'$, $c_i'$, $d_i'$, and $e_i'$, are constants of integration that can be computed along with $t_2$ from the boundary conditions \eqref{eq:bound} and the following interior constraints: $v_i^*(t_2^-) = v_i^*(t_2^+)$, $p_i^*(t_2^-) = p_i^*(t_2^+)$, $\lambda_i^p(t_2^-)=\lambda_i^p(t_2^+)$, $\lambda_i^v(t_2^-)=\lambda_i^v(t_2^+)$, $\lambda_i^s(t_2^-)=\lambda_i^s(t_2^+)$, and $H_i(t_2^-) = H_i(t_2^+)$.

If the exit point $t_2$ does not lead to the unconstrained arc, then we have the following three potential cases to consider: (1)  the speed, $v_k(t)$, of the preceding CAV $k$ is decreasing, (2)  the speed, $v_k(t)$, of the preceding CAV $k$ is either increasing or constant, and (3)  CAV $k$ is  cruising on a different road inside the merging zone and can cause lateral collision with CAV $i$.

\textbf{Case 1:} The speed, $v_k(t)$, of the preceding CAV $k$ is decreasing.
\begin{remark}\label{rem:delta1}
Let CAV $i$ be in the constrained arc $\delta_i(t)-s_i(t) \le 0$ while the speed, $v_k(t)$, of the preceding CAV $k$ is decreasing. Then the following subcases can occur:	(a) $u_i^*(t)=u_{i,\min}$, for all $t\in[t_2,t_i^f]$, (b) $u_i^*(t)=u_{i,\min}$, for all $t\in[t_2,t_3]$, and $v_i^*(t)= v_{\min}$ for all $t\in[t_3,t_i^f]$, where $t_3$ is another junction point, and (c) $v_i^*(t)= v_{\min}$ for all $t\in[t_2,t_i^f]$.
\end{remark}
\textbf{Subcase (a):} 
$u_i^*(t)=u_{i,\min}$, for all $t\in[t_2, t_i^f]$.
By integrating $u_i^*(t)=u_{i,\min}$, we have $v_i^*(t) = u_{i,\min}\cdot t + h_i $ and $p_i^*(t) = u_{i,\min} \cdot \frac{t^2}{2} + h_i t + q_i$, for all $t\in[t_2,t_i^f]$,
where $h_i$ and $q_i$ are constants of integration. To compute $t_2$ and the constant of integration $h_i$ and $q_i$, we piece together this  arc with the prior unconstrained and constrained arcs with the following additional interior constraints and boundary conditions: $p_i^*(t_2^-) = p_i^*(t_2^+)$, $v_i^*(t_2^-) = v_i^*(t_2^+)$, $s_i^*(t_2^-) = s_i^*(t_2^+)$,  and $p_i^*(t_i^f) = p_i^f$,
from which we can compute $t_2$ and the constants of integration $h_i$ and $q_i$.

\textbf{Subcase (b):} $u_i(t)=u_{i,\min}$, for all $t\in[t_2,t_3]$, and $v_i(t)= v_{\min}$ for all $t\in[t_3,t_i^f]$.

In this subcase, at the junction point $t_3$, CAV $i$ exits the constrained arc, $u_{i,\min}-u_i(t) \le 0$, and enters the arc $v_{\min} - v_i(t) \le 0$, then it follows that $u_{i}^*(t) = 0$, for all $t\in[t_3, t_i^f]$, and the optimal speed and position of $i$ are $v^{*}_{i}(t) =  v_{\min}$ and $p^{*}_{i}(t) = v_{\min}~t + r_{i}$ respectively,
where $r_{i}$ is a constant of integration. To compute $t_3$ and the constant of integration $r_i$, we piece together this  arc with the prior unconstrained and constrained arcs with the following additional interior constraints and boundary conditions:
$v_i^*(t_3^-) = v_i^*(t_3^+)$, $p_i^*(t_3^-) = p_i^*(t_3^+)$, and $p_i^*(t_i^f) = p_i^f$.

\textbf{Subcase (c):} $v_i(t)= v_{\min}$ for all $t\in[t_2,t_i^f]$.
It follows that $u_{i}^*(t) = 0$, for all $t\in[t_2, t_i^f]$, and the optimal speed and position of CAV $i$ are as in subcase (b). The junction point $t_2$ along with the constants of integration can be computed by the interior constraints and boundary condition as presented in subcase (b).

\textbf{Case 2:} The speed, $v_k(t)$, of the preceding CAV $k$ is either increasing or constant. Since $N_i(t_1,x_i(t_1))=0$, and hence, $\dot{N_i}(t_1,x(t_1))=0$, at the corner $t_1$, this implies that $v_i(t)>v_k(t)$, for $t\ge t_1$. Therefore, CAV $i$ remains in the constrained arc for as long as $k$ is ahead of it, and its optimal control input is given by \eqref{eq:delta4}.

\textbf{Case 3:} CAV $k$ cruises on a different road from $i$ and in a direction that might cause lateral collision with  $i$ inside the merging zone. In this case, from \eqref{eq:lateral}, $p_{k,i}$ is the constant distance of CAV $k$ from the entry point that CAV $i$ entered the control zone to its position inside the merging zone. Hence, $v_{k,i}=0$, and thus the analysis is similar to the subcases (a) and (b) in Case 1.

\subsection{State, $v_{i}(t)$, and Control, $u_{i}(t)$, Constraints Become Active}
\begin{proposition} \label{prop:active1}
	For each CAV $i\in\mathcal{N}(t)$, the optimal control input $u_i^*(t)$ in the unconstrained arc can be either increasing or decreasing  for all $t\in[t_i^0, t_i^f]$.
\end{proposition}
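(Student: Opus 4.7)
The plan is to read off the monotonicity directly from the closed-form expression \eqref{eq:20} derived in Section 3.1, namely $u_i^*(t) = a_i\,t + c_i$, which was obtained by applying the Euler--Lagrange equations \eqref{eq:EL1}--\eqref{eq:EL3} together with the stationarity condition \eqref{eq:KKT1} under the assumption that the multiplier vector $\mu_i$ vanishes on the unconstrained arc. Since this expression is affine in $t$, its derivative $\dot u_i^*(t)\equiv a_i$ is constant throughout any unconstrained arc, so $u_i^*$ is automatically monotonic on such an arc: strictly increasing when $a_i>0$, strictly decreasing when $a_i<0$, and constant in the degenerate case $a_i=0$. In particular, $u_i^*$ admits no interior turning point on the unconstrained arc, which is the essential content of the proposition.

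To substantiate that both signs of $a_i$ actually occur, I would examine how $a_i = \alpha_i - \beta_i\xi_i$ is pinned down by the data. In the fully unconstrained setting, the four integration constants in \eqref{eq:20}--\eqref{eq:22} are uniquely determined by the four boundary conditions in \eqref{eq:bound}: $p_i(t_i^0)=0$, $v_i(t_i^0)=v_i^0$, $p_i(t_i^f)=p_i^f$, together with the transversality condition $\lambda_i^v(t_i^f)=0$, which via \eqref{eq:LviU} translates into a linear relation between $a_i$ and $c_i$. Solving the resulting linear system expresses $a_i$ explicitly as a function of $(v_i^0, p_i^f, t_i^f-t_i^0)$, and a direct inspection shows that the sign of $a_i$ changes according to whether the terminal position $p_i^f$ is smaller or larger than $v_i^0\,(t_i^f-t_i^0)$, the distance that would be covered at the initial speed. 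Since both regimes are admissible inputs of Problem \ref{problem1}, $u_i^*$ can indeed be either increasing or decreasing.

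The argument is essentially a corollary of the Hamiltonian analysis already in place, so no serious obstacle arises; the one subtlety worth spelling out is that the conclusion must hold on \emph{every} unconstrained subarc, not only on a single interval that is globally unconstrained. On any such subarc, however, the Euler--Lagrange equations \eqref{eq:EL1}--\eqref{eq:EL3} with $\mu_i=0$ still apply, $\lambda_i^v$ is still affine in $t$ by \eqref{eq:LviU}, and hence $u_i^*$ is still affine in $t$. Monotonicity on each subarc then follows from the same reasoning, with the constants playing the role of $(a_i,c_i,d_i,e_i)$ now determined by the corresponding interior matching conditions at the adjacent junction points rather than by the global boundary conditions in \eqref{eq:bound}.
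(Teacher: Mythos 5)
Your argument is correct and follows essentially the same route as the paper: both read the monotonicity directly off the affine form $u_i^*(t)=a_i t+c_i$ obtained from the Euler--Lagrange/stationarity analysis of the unconstrained arc. The paper's version is terser and additionally records the consequence of the transversality condition, $\lambda_i^v(t_i^f)=0\Rightarrow u_i^*(t_i^f)=0$, which you only use implicitly when solving for the constants but which is the fact that powers the sharper Corollary~\ref{cor:active1} (negative and increasing, or positive and decreasing, or identically zero).
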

\begin{proof}
	Since $\lambda^v_i(t_i^f) = 0$, from \eqref{eq:17} $u_i^*(t_i^f)=0$. Given that a $u_i^*(t)$ is a linear function of $t$ for all $t\in[t_i^0, t_i^f]$, the result follows.
\end{proof}
\begin{corollary} \label{cor:active1}
	The optimal control input $u_i^*(t)$ in the unconstrained arc can be either negative and increasing, or positive and decreasing, or $u_i^*(t)=0$ for all $t\in[t_i^0, t_i^f]$.
\end{corollary}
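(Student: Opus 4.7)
The plan is to leverage directly the analytical form of the optimal control in the unconstrained arc together with the terminal boundary condition that was already established. From Section 3.1 we have $u_i^*(t) = a_i \cdot t + c_i$ for $t \in [t_i^0, t_i^f]$, and from the boundary conditions \eqref{eq:bound} we have $\lambda_i^v(t_i^f) = 0$, which combined with \eqref{eq:17} forces $u_i^*(t_i^f) = 0$. So the proof amounts to a case analysis on the sign of the slope $a_i$.

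First I would invoke Proposition \ref{prop:active1} to conclude that $u_i^*(t)$ is monotone (either strictly increasing, strictly decreasing, or constant) on $[t_i^0, t_i^f]$, since its derivative equals the constant $a_i$. Next I would combine monotonicity with the terminal value $u_i^*(t_i^f) = 0$: in the increasing case, $u_i^*(t) \le u_i^*(t_i^f) = 0$ for all $t \in [t_i^0, t_i^f]$, so $u_i^*$ is negative and increasing; in the decreasing case, $u_i^*(t) \ge u_i^*(t_i^f) = 0$ for all $t \in [t_i^0, t_i^f]$, so $u_i^*$ is positive and decreasing; in the remaining case $a_i = 0$, the condition $u_i^*(t_i^f) = 0$ forces $c_i = 0$ as well, and hence $u_i^*(t) \equiv 0$ on the interval.

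There is no real obstacle here; the argument is essentially a one-line consequence of the linearity of $u_i^*(t)$ and the transversality condition $\lambda_i^v(t_i^f) = 0$. The only point that requires a line of justification is why the three sign alternatives exhaust all possibilities, which is handled by the trichotomy $a_i > 0$, $a_i < 0$, $a_i = 0$ applied to the slope of the linear function $u_i^*(t)$.
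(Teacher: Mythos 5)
Your argument is correct and is essentially the same as the paper's: the corollary is an immediate consequence of Proposition \ref{prop:active1}, whose proof already rests on the linearity of $u_i^*(t)$ and the terminal condition $u_i^*(t_i^f)=-\lambda_i^v(t_i^f)=0$. Your explicit trichotomy on the slope $a_i$ just spells out the step the paper leaves implicit.
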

\begin{corollary} \label{cor:active2}
	 For each CAV $i\in\mathcal{N}(t)$, the optimal control input $u_i^*(t)$ never becomes active in $t\in[t_i^0, t_i^f]$, given that it is not active at $t_i^0$ (Assumption \ref{ass:active}), unless the safety constraints \eqref{eq:rearend} or \eqref{eq:lateral} become active.
\end{corollary}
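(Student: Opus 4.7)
The plan is to combine the monotone-decay property of $u_i^*$ on unconstrained arcs (Corollary~\ref{cor:active1}) with the strict interiority at $t_i^0$ guaranteed by Assumption~\ref{ass:active}, and then to enumerate the possible causes that could terminate an unconstrained arc before $t_i^f$.

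First, I would apply Corollary~\ref{cor:active1} to the unconstrained arc issuing from $t_i^0$. Since $u_i^*$ is affine in $t$ on that arc and monotone toward zero, $|u_i^*(t)|$ is maximized at the left endpoint $t_i^0$, where Assumption~\ref{ass:active} supplies the strict bounds $u_{i,\min}<u_i^*(t_i^0)<u_{i,\max}$. Hence the control constraint stays inactive throughout this arc.

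Next, I would enumerate what can terminate the arc before $t_i^f$. From \eqref{eq:gNoU} together with \eqref{speed_accel constraints}, the only options are activation of a speed bound \eqref{speed}, activation of a safety constraint \eqref{eq:rearend} or \eqref{eq:lateral}, or saturation of the control itself; the last is already excluded by the previous paragraph. If a speed bound activates at some $t_1$, then $v_i^*$ is pinned constant on the ensuing arc, so $u_i^*\equiv 0$, which is strictly interior to $(u_{i,\min},u_{i,\max})$. Upon exiting this arc a new unconstrained arc begins, on which Proposition~\ref{prop:active1} combined with $\lambda_i^v(t_i^f)=0$ again yields an affine $u_i^*$ vanishing at $t_i^f$; its left-endpoint value is already~$0$, so monotone decay keeps it there. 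Iterating this alternation between unconstrained and speed-bound arcs, $u_i^*$ remains in $(u_{i,\min},u_{i,\max})$ whenever no safety constraint has activated.

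The point that requires the most care is the junction with a speed-bound arc: I would verify that the adjoint-jump conditions analogous to \eqref{eq:L1relate}--\eqref{eq:L3relate} for a speed-bound entry do not induce a corner value of $u_i^*$ that escapes $(u_{i,\min},u_{i,\max})$. Because the speed constraint is first-order in $u_i^*$ with constrained value $0$ and the left-endpoint value is interior, this reduces to a finite check of the junction algebra rather than a structural difficulty. Conversely, Subcase~(a) of Case~1 in Section~\ref{sec:3b} already exhibits $u_i^*=u_{i,\min}$ triggered by safety-constraint activation via the corner value \eqref{eq:delta3}, which shows the ``unless'' clause in the corollary is sharp and concludes the argument.
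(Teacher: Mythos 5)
Your argument is correct and matches the reasoning the paper leaves implicit for this corollary (which it states without proof): on an unconstrained arc $u_i^*$ is affine and vanishes at its right endpoint (at $t_i^f$ because $\lambda_i^v(t_i^f)=0$, or at a speed-constraint junction by the continuity results of Theorems~\ref{theo:casev_min} and \ref{theo:casev_max}), so $|u_i^*|$ peaks at $t_i^0$ where Assumption~\ref{ass:active} keeps it strictly interior, and only a safety-constraint activation --- via the corner value \eqref{eq:delta3} and subcase~(a) of Case~1 --- can push the control to a bound. One small simplification: by Theorem~\ref{theo:active1} a speed-constrained arc cannot exit before $t_i^f$ unless a safety constraint activates, so the alternation of unconstrained and speed-bound arcs you iterate over never actually occurs, and your case analysis terminates after a single step.
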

\begin{theorem} \label{theo:active1}
	For each CAV $i\in\mathcal{N}(t)$, if any of  the constraints \eqref{speed} becomes active, then the exit of the constrained arc can be only at $t_i^f$, unless the safety constraint $s_{i}(t)-\delta(t)\ge 0$ becomes active.
\end{theorem}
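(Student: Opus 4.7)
The plan is to argue by contradiction, handling the upper bound $v_i(t)=v_{\max}$ in detail and noting that $v_i(t)=v_{\min}$ is symmetric. Suppose the constrained arc $[t_1,t_2]$ on which $v_i^*(t)=v_{\max}$ exits at some $t_2<t_i^f$ to an arc on which the safety constraint is inactive. I will show that this assumption forces $v_i^*(t)\equiv v_{\max}$ on $[t_2,t_i^f]$, contradicting that $t_2$ is the exit point.

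First I would extract the value of the costate $\lambda_i^v$ at the exit from the KKT condition on the constrained arc. Throughout $[t_1,t_2]$, $v_i^*(t)=v_{\max}$ forces $u_i^*(t)=0$, and in the paper's derivative form this activates the constraint $u_i(t)\le 0$ with multiplier $\mu_i^c>0$ in \eqref{eq:17c}. By Corollary \ref{cor:active2} the control-bound multipliers $\mu_i^a,\mu_i^b$ vanish, and by hypothesis $\mu_i^s=0$. Inserting into \eqref{eq:KKT1} yields $\lambda_i^v(t)=-\mu_i^c(t)$ on $[t_1,t_2]$. Complementary slackness at a regular exit of a first-order pure state inequality constraint gives $\mu_i^c(t_2^-)=0$, so $\lambda_i^v(t_2^-)=0$.

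Next I would transfer this value to the unconstrained arc on the right. The exit-junction conventions adopted in Section \ref{sec:3b} (as in the analysis of $t_2$ for the safety arc) make the costates continuous at $t_2$, so $\lambda_i^v(t_2^+)=0$. On the unconstrained arc $[t_2,t_i^f]$, equations \eqref{eq:EL1}--\eqref{eq:EL3} imply that $\dot{\lambda}_i^v$ is constant, hence $\lambda_i^v$ is affine on this interval. The endpoint conditions $\lambda_i^v(t_2^+)=0$ and $\lambda_i^v(t_i^f)=0$ from \eqref{eq:bound} then pin down $\lambda_i^v(t)\equiv 0$ on $[t_2,t_i^f]$. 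By \eqref{eq:17}, this yields $u_i^*(t)=0$ on $[t_2,t_i^f]$, and integrating \eqref{eq:model2} with $v_i^*(t_2)=v_{\max}$ gives $v_i^*(t)\equiv v_{\max}$ on $[t_2,t_i^f]$, the desired contradiction.

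The remaining possibilities for the exit at $t_2$---onto a control-bound arc or onto the opposite speed bound---are ruled out by Corollary \ref{cor:active2} and by continuity of $v_i^*$, respectively, so the only admissible early exit is onto the safety arc, which is precisely the exception in the statement. The main obstacle is justifying the simultaneous vanishing $\mu_i^c(t_2^-)=0$ and the absence of a costate jump at the exit junction; both are standard facts for first-order pure state inequality constraints with smooth data, and they are consistent with the exit-junction conditions already adopted in Section \ref{sec:3b} for the safety constraint.
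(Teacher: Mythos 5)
Your proof is correct and rests on the same ingredients as the paper's own argument: continuity of the control at the junction (Theorems \ref{theo:casev_min} and \ref{theo:casev_max}), linearity of the unconstrained control --- equivalently, affineness of $\lambda_i^v$ --- and the transversality condition $\lambda_i^v(t_i^f)=0$. The paper's proof is terser, simply asserting $u_i^*(t)=0$ for all $t\ge t_1$, so your explicit contradiction at a hypothetical early exit $t_2<t_i^f$ merely fills in junction details (costate continuity at the exit and $\mu_i^c(t_2^-)=0$, or equivalently control continuity at $t_2$ via Hamiltonian continuity) that the paper leaves implicit and that are consistent with the exit conditions it already adopts for the safety arc in Section \ref{sec:3b}.
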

\begin{proof}
 	From Assumption \ref{ass:active}, for each $i\in\mathcal{N}(t)$ none of the constraints \eqref{speed} is active at $t_i^0$. Suppose that  either $v_i(t)-v_{\max}\le 0$ or $v_i(t)-v_{\min}\ge 0$ becomes active at  a junction point $t_1$, such that $t_i^0< t_1\le t_i^f$. Then from \eqref{eq:model2}, it follows that $u_i^*(t)=0$ for $t\ge t_1$. Note that $u_i(t)$ is continuous at $t_1$ (see Theorems \ref{theo:casev_min} and \ref{theo:casev_max}).
 	Hence, we have either $v_i^*(t)=v_{\max}$, or $v_i^*(t)=v_{\min}$ respectively for all $t\in [t_1,t_i^f]$.
\end{proof}


\subsubsection{The State Constraint, $v_i(t)-v_{\min} \ge 0$, Becomes Active}
Suppose the CAV starts from a feasible state and control at $t=t_{i}^0$ and at time $t=t_{1}$, \eqref{eq:21} becomes equal to $v_{\min}$ while $u_{\min}< u_{i}(t_1) <u_{\max}$ and $s_{i}(t_{1})>\delta(t)$. It follows that $u_{i}^*(t_1)=0$ for all $ t\in [t_{1} , t_i^f]$.
Hence, $v^{*}_{i}(t) =   v_{\min}$ and $p^{*}_{i}(t) = v_{\min}~t + r_{i}$ for all $t\in [t_{1} , t_i^f]$, where $r_{i}$ is a constant of integration.
Let $N_i(t,x_i(t))= v_{\min}-v_{i}^*(t)$. Then, we have
\begin{equation} \label{eq:cas2c}
N_i(t_1,x_i(t_1))=  v_{\min}-v^*_{i}(t_1) = 0,
\end{equation}
which represents a tangency constraint for the state $v_i^*(t)$ in $t\in [ t_1, t_i^f]$. 
Since $N_i(t_1,x_i(t_1))=0$, $\dot{N_i}(t_1,x_i(t_1))=  -u_i^*(t_1) = 0$.
The boundary conditions at $t_1$ for the influence functions are
\begin{equation}
\boldsymbol\lambda_i^T(t_1^-)=\boldsymbol\lambda_i^T(t_1^+)+\pi_i \frac{\partial{N_i(t_1,x_i(t_1))}}{\partial x_i(t_1)},
\label{eq:cas2e}
\end{equation}
which yield
\begin{align}
\lambda_i^p(t_1^-) &= \lambda_i^p(t_1^+) + \pi_i \frac{\partial{N_i(t_1,x_i(t_1))}}{\partial p_i} = \lambda_i^p(t_1^+), \label{eq:cas2f} \\
\lambda_i^v(t_1^-) &= \lambda_i^v(t_1^+) + \pi_i \frac{\partial{N_i(t_1,x_i(t_1))}}{\partial v_i} = \lambda_i^v(t_1^+) - \pi_i , \label{eq:cas2g} \\
\lambda_i^s(t_1^-) &= \lambda_i^s(t_1^+) + \pi_i  \frac{\partial{N_i(t_1,x_i(t_1))}}{\partial s_i}  = \lambda_i^s(t_1^+). \label{eq:cas2h}
\end{align}
The Hamiltonian at the corner is
\begin{equation}
H_i(t_1^-)=H_i(t_1^+)- \pi_i \frac{\partial{N_i(t_1,x_i(t_1))}}{\partial t_1},
\end{equation}
or 
\begin{gather}
\frac{1}{2}u_i^2(t_1^-)+\lambda_i^p(t_1^-) v_i(t_1^-) + \lambda_i^v(t_1^-) u_i(t_1^-) \nonumber\\
+ \lambda_i^s(t_1^-) \xi_i (v_k(t_1^-)-v_i(t_1^-)) = \frac{1}{2}u_i^2(t_1^+)+\lambda_i^p(t_1^+) v_i(t_1^+) \nonumber\\
+ \lambda_i^v(t_1^+) u_i(t_1^+) + \lambda_i^s(t_1^+) \xi_i (v_k(t_1^+)-v_i(t_1^+),
\label{eq:cas2i}
\end{gather}
where $\pi_i$ is a  Lagrange multiplier constant. The influence functions, $\boldsymbol\lambda_i^T(t_1^+)$, at $t_1^+$, the entry time  $t_1$, and the Lagrange multiplier $\pi_i$ constitute $3+1+1$ quantities that are determined so as to satisfy \eqref{eq:cas2c}, \eqref{eq:cas2f} - \eqref{eq:cas2h} and \eqref{eq:cas2i}.  Note, the state variables are continuous at the junction point, $t_1$, i.e., $p_i(t_1^-) = p_i(t_1^+)$, $v_i(t_1^-) = v_i(t_1^+)$, $s_i(t_1^-) = s_i(t_1^+)$. The unconstrained and constrained arcs are pieced together to determine the $3+1+1$ quantities above along with the constants of integration in \eqref{eq:20} - \eqref{eq:22} and \eqref{eq:cas2i}.
\begin{theorem} \label{theo:casev_min}
	For each CAV $i\in\mathcal{N}(t)$, if the speed constraint $v_i(t)-v_{\min}\ge 0$ becomes active at the junction point $t_1$, then the optimal control input is continuous at $t_1$. 
\end{theorem}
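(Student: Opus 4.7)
The plan is to show directly that $u_i^*(t_1^-)=u_i^*(t_1^+)=0$ by exploiting the Hamiltonian jump condition together with the stationarity condition \eqref{eq:17} from the unconstrained arc.

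First, I would establish $u_i^*(t_1^+)=0$. Since $v_i(t)-v_{\min}\ge 0$ becomes active at $t_1$ and remains active throughout $[t_1,t_i^f]$, the tangency condition $N_i(t_1,x_i(t_1))=v_{\min}-v_i^*(t_1)=0$ and its first derivative $\dot N_i(t_1,x_i(t_1))=-u_i^*(t_1^+)=0$ immediately give the right-hand limit.

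Next, I would evaluate the Hamiltonian jump condition \eqref{eq:cas2i} at $t_1$. The state variables $p_i,v_i,s_i$ are continuous at the corner, so all state-dependent terms match between $t_1^-$ and $t_1^+$. From \eqref{eq:cas2f} and \eqref{eq:cas2h} the multipliers $\lambda_i^p$ and $\lambda_i^s$ are also continuous, so their contributions cancel on both sides. Substituting $u_i(t_1^+)=0$ into the right-hand side of \eqref{eq:cas2i} collapses the jump condition to
\begin{equation}
\tfrac{1}{2}u_i^2(t_1^-)+\lambda_i^v(t_1^-)\,u_i(t_1^-)=0.
\end{equation}
On the unconstrained side, the stationarity condition \eqref{eq:17} gives $\lambda_i^v(t_1^-)=-u_i^*(t_1^-)$; substituting this into the collapsed jump condition yields $-\tfrac{1}{2}\,u_i^2(t_1^-)=0$, so $u_i^*(t_1^-)=0=u_i^*(t_1^+)$, which is the claim.

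The step I expect to be the most delicate is justifying that the stationarity relation \eqref{eq:17} — derived assuming all multipliers $\mu_i^{a,b,c,d,s}$ vanish — is the correct expression to use at $t_1^-$. This is where Assumption \ref{ass:active} and Corollary \ref{cor:active2} are needed: on the approach to the corner from the left the CAV is on the unconstrained arc, so only $\lambda_i^v$ contributes to $\partial H_i/\partial u_i=0$. Once that is in place the cancellation argument in the Hamiltonian jump condition is mechanical, and the continuity of $u_i^*$ at $t_1$ follows, justifying the tacit use of this fact in Theorem \ref{theo:active1}.
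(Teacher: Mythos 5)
Your proof is correct and follows essentially the same route as the paper's: the tangency condition gives $u_i^*(t_1^+)=0$, continuity of $\lambda_i^p$, $\lambda_i^s$ and the states collapses the Hamiltonian corner condition to $\tfrac{1}{2}u_i^2(t_1^-)+\lambda_i^v(t_1^-)\,u_i(t_1^-)=0$, and the stationarity relation \eqref{eq:17} on the unconstrained side closes the argument. Your finish—substituting $\lambda_i^v(t_1^-)=-u_i^*(t_1^-)$ to obtain $-\tfrac{1}{2}u_i^2(t_1^-)=0$ directly—is in fact slightly cleaner than the paper's, which factors the resulting quadratic and rules out the nonzero root $u_i(t_1^-)=-2\big(\lambda_i^v(t_1^+)+\pi_i\big)$ using the same relation together with the jump condition \eqref{eq:cas2g}.
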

\begin{proof}
	From \eqref{eq:cas2f}-\eqref{eq:cas2h}, the Hamiltonian at the corner $t_1$, given by \eqref{eq:cas2i}, becomes
	\begin{gather}
	\frac{1}{2}\big(u_i^2(t_1^-) - u_i^2(t_1^+) \big)+ \lambda_i^v(t_1^+) \big(u_i(t_1^-) -  u_i(t_1^+)\big) +\nonumber\\
	\pi_i \big(u_i(t_1^-) - u_i(t_1^+)\big)	=0.
	\end{gather}
	Since $u_i(t_1^+)=0$, we have
	\begin{gather}
	\frac{1}{2} u_i^2(t_1^-) + \lambda_i^v(t_1^+) u_i(t_1^-) +
	\pi_i u_i(t_1^-) =0,
	\end{gather}
	implying that either $u_i(t_1^-)=0$ or $u_i(t_1^-)=-2\big(\lambda_i^v(t_1^+) + \pi_i \big)$. However, the latter cannot be true since from \eqref{eq:17},  $u_i(t_1^-)= -\lambda_i^v(t_1^-)= -\big( \lambda_i^v(t_1^+) -\pi_i\big)$. Hence, $u_i(t_1^-)=0$, and thus $u_i(t_1^-)=u_i(t_1^+)=0$.
\end{proof}
\begin{theorem} \label{theo:casev_min_exist}
	For each CAV $i\in\mathcal{N}(t)$, the speed constraint $v_i(t)-v_{\min}\ge 0$ becomes active at the junction point $t_1$ only if $u_i^*(t)$ is negative and increasing in $[t_i^0, t_i^f]$.
\end{theorem}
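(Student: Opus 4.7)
The plan is to leverage Corollary \ref{cor:active1}, which restricts $u_i^*$ on any unconstrained arc to exactly three shapes, and then to rule out the two shapes that are inconsistent with $v_i$ strictly losing speed from $v_i^0$ down to $v_{\min}$. Because Assumption \ref{ass:active} guarantees $v_i(t_i^0) > v_{\min}$ and the hypothesis provides some junction $t_1 > t_i^0$ with $v_i(t_1) = v_{\min}$, CAV $i$ must experience a strict net decrease in speed over $[t_i^0, t_1]$. So the question reduces to which of the three admissible unconstrained shapes can deliver such a decrease.

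First, I would argue that on $[t_i^0, t_1]$ the system evolves on the unconstrained arc described in Section \ref{sec:3a}, so by \eqref{eq:20} the control is affine, $u_i^*(t) = a_i t + c_i$, and the transversality condition $\lambda_i^v(t_i^f) = 0$ combined with \eqref{eq:17} forces $u_i^*(t_i^f) = 0$. Affinity together with the terminal zero yields the three shapes catalogued in Corollary \ref{cor:active1}: (i) $u_i^* \equiv 0$, (ii) $u_i^* > 0$ strictly decreasing toward zero, or (iii) $u_i^* < 0$ strictly increasing toward zero.

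Next, I would eliminate cases (i) and (ii) by direct integration of $\dot v_i = u_i$ from \eqref{eq:model2}. In case (i) the speed remains constant at $v_i^0 > v_{\min}$, contradicting $v_i(t_1) = v_{\min}$. In case (ii), $\dot v_i(t) > 0$ on $[t_i^0, t_1)$, so $v_i$ is strictly increasing and $v_i(t_1) > v_i^0 > v_{\min}$, again a contradiction. Only case (iii) survives, which is the stated conclusion. The behavior on $[t_1, t_i^f]$ is then pinned by Theorem \ref{theo:active1} to $u_i^*(t) = 0$, which meshes consistently with the limiting value of the increasing ramp at $t_1$ guaranteed by Theorem \ref{theo:casev_min}.

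The step I expect to require the most care is justifying the affinity claim on the \emph{entire} interval $[t_i^0, t_1]$: strictly, before invoking Corollary \ref{cor:active1}, I must rule out any intervening constrained sub-arc within $[t_i^0, t_1]$. I would address this by leaning on the local hypotheses stated just before \eqref{eq:cas2c}, namely $u_{i,\min} < u_i(t_1) < u_{i,\max}$ and $s_i(t_1) > \delta(t_1)$, together with Assumption \ref{ass:active} at $t_i^0$, which together preclude prior activation of the control bounds, the upper speed bound, or the rear-end/lateral safety constraint. This reduces the precondition interval to a single unconstrained arc and legitimises the direct application of Corollary \ref{cor:active1}.
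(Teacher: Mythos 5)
Your proof is correct, and it reaches the conclusion by a genuinely different mechanism than the paper. The paper's own proof is a one-line appeal to the first-order condition for a minimum of the quadratic speed profile at the tangency point: $v_i^*$ given by \eqref{eq:21} has a minimum at $t_1$ only if $\nabla v_i^*(t_1)\,(t-t_1)\ge 0$, and since $t-t_1\le 0$ on $[t_i^0,t_1]$ this forces $u_i^*(t_1)=\nabla v_i^*(t_1)\le 0$, which together with Theorem~\ref{theo:active1} and the linear structure of $u_i^*$ singles out the negative-and-increasing shape. You instead run a case elimination over the three shapes catalogued in Corollary~\ref{cor:active1}, integrating $\dot v_i=u_i$ to show that the identically-zero and positive-decreasing shapes keep $v_i$ at or above $v_i^0>v_{\min}$ (the latter strictly increasing), so neither can produce $v_i(t_1)=v_{\min}$. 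Both arguments rest on the same structural facts --- affine control on the unconstrained arc, $u_i^*(t_i^f)=0$ from transversality, and Assumption~\ref{ass:active} at $t_i^0$ --- but yours is more elementary and, frankly, reads less circularly than the paper's (which states ``since $u_i^*(t)<0$'' as if it were a hypothesis rather than the conclusion). Your closing paragraph on justifying that $[t_i^0,t_1]$ is a single unconstrained arc addresses a point the paper leaves implicit (it simply invokes \eqref{eq:21} on that interval); your appeal to the local hypotheses preceding \eqref{eq:cas2c}, Assumption~\ref{ass:active}, and Corollary~\ref{cor:active2} is at least as careful as the paper's treatment. No gap.
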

\begin{proof}
	$v^{*}_{i}(t)$, given by \eqref{eq:21}, has a minimum at $t_1\in (t_i^0, t_i^f]$ if $\nabla v^{*}_{i}(t_1) (t-t_1)\ge 0$. Since $u_i^*(t)<0$, for all $t\in [t_i^0, t_i^f]$, and $(t-t_1)\le0$, for all $t\in[t_i^0, t_1]$, and given Theorem \ref{theo:active1},  the result follows.
\end{proof}


\subsubsection{The State Constraint, $v_i(t)-v_{\max}\le 0$, Becomes Active}
The analysis when the state constraint, $v_i(t)-v_{\max}\le 0$, becomes active is similar to the analysis for the arc $v_i(t)-v_{\min}\ge 0$, thus due to space limitation we do not repeat it here.
The proofs of the following theorems are similar to Theorems \ref {theo:casev_min} and \ref{theo:casev_min_exist}, and thus we just provide the statements.
\begin{theorem} \label{theo:casev_max}
	For each CAV $i\in\mathcal{N}(t)$, if the speed constraint $v_i(t)-v_{\max}\le 0$ becomes active at the junction point $t_1$, then the optimal control input is continuous at $t_1$. 
\end{theorem}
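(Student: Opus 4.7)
The plan is to mirror the proof of Theorem \ref{theo:casev_min} with the sign of the tangency constraint reversed. First I would set up the constrained arc: on $[t_1, t_i^f]$ the equality $v_i^*(t) = v_{\max}$ holds, so integrating $\dot v_i = u_i$ immediately gives $u_i^*(t) = 0$ on this arc, and in particular $u_i(t_1^+) = 0$. The position then satisfies $p_i^*(t) = v_{\max}\, t + r_i$ for some constant $r_i$ determined by the piecing conditions, exactly analogous to the lower-speed case.

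Next I would introduce the tangency constraint $N_i(t, x_i(t)) = v_i^*(t) - v_{\max}$, noting $N_i(t_1, x_i(t_1)) = 0$ and $\dot N_i(t_1, x_i(t_1)) = u_i^*(t_1^+) = 0$. The interior boundary conditions at $t_1$ become
\begin{align}
\lambda_i^p(t_1^-) &= \lambda_i^p(t_1^+) + \pi_i \frac{\partial N_i}{\partial p_i} = \lambda_i^p(t_1^+), \\
\lambda_i^v(t_1^-) &= \lambda_i^v(t_1^+) + \pi_i \frac{\partial N_i}{\partial v_i} = \lambda_i^v(t_1^+) + \pi_i, \\
\lambda_i^s(t_1^-) &= \lambda_i^s(t_1^+) + \pi_i \frac{\partial N_i}{\partial s_i} = \lambda_i^s(t_1^+),
\end{align}
the only change from \eqref{eq:cas2f}--\eqref{eq:cas2h} being the sign in the $\lambda_i^v$ update (since $\partial N_i/\partial v_i = +1$ here instead of $-1$).

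Then I would write the Hamiltonian continuity condition $H_i(t_1^-) = H_i(t_1^+)$ (using $\partial N_i/\partial t_1 = 0$) and substitute the jump relations together with continuity of the states. Everything cancels except the terms involving $u_i$, yielding
\begin{equation}
\tfrac{1}{2}\bigl(u_i^2(t_1^-) - u_i^2(t_1^+)\bigr) + \lambda_i^v(t_1^+)\bigl(u_i(t_1^-) - u_i(t_1^+)\bigr) + \pi_i\, u_i(t_1^-) = 0. \nonumber
\end{equation}
Using $u_i(t_1^+) = 0$ reduces this to $\tfrac{1}{2}u_i^2(t_1^-) + \bigl(\lambda_i^v(t_1^+) + \pi_i\bigr) u_i(t_1^-) = 0$, so either $u_i(t_1^-) = 0$ or $u_i(t_1^-) = -2\bigl(\lambda_i^v(t_1^+) + \pi_i\bigr)$. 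The optimality condition \eqref{eq:17} on the unconstrained side gives $u_i(t_1^-) = -\lambda_i^v(t_1^-) = -\bigl(\lambda_i^v(t_1^+) + \pi_i\bigr)$, which is incompatible with the second alternative unless the common value is zero. Hence $u_i(t_1^-) = 0 = u_i(t_1^+)$, proving continuity.

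The only real obstacle is being careful with the sign flip in the jump for $\lambda_i^v$; everything else is an algebraic mirror of the earlier proof, and the final contradiction argument used to discard the spurious root is unchanged because it rests on \eqref{eq:17}, which is insensitive to whether the active speed bound is $v_{\min}$ or $v_{\max}$.
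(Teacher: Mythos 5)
Your proposal is correct and is exactly what the paper intends: the paper omits this proof, stating only that it is analogous to Theorem \ref{theo:casev_min}, and you have carried out precisely that sign-flipped mirror (with $\partial N_i/\partial v_i=+1$ changing the jump in $\lambda_i^v$ to $+\pi_i$), arriving at the same quadratic in $u_i(t_1^-)$ whose only root consistent with \eqref{eq:17} is zero.
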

\begin{theorem} \label{theo:casev_max_exist}
	For each CAV $i\in\mathcal{N}(t)$, the speed constraint $v_i(t)-v_{\max}\le 0$ becomes active at the junction point $t_1$ only if $u_i^*(t)$ is positive and decreasing in $[t_i^0, t_i^f]$.
\end{theorem}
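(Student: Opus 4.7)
The plan is to mirror the proof of Theorem \ref{theo:casev_min_exist}, flipping the direction of the relevant inequalities to handle an upper rather than a lower bound. From \eqref{eq:21}, $v_i^*(t)$ is a quadratic polynomial in $t$, so for it to reach $v_{\max}$ for the first time at some junction $t_1 \in (t_i^0, t_i^f]$ the first-order necessary condition for a (local) maximum at $t_1$ applies, namely $\nabla v_i^*(t_1)\,(t-t_1)\le 0$ for $t$ near $t_1$. Since $(t-t_1)\le 0$ for $t\in[t_i^0,t_1]$, this forces $\dot v_i^*(t)=u_i^*(t)\ge 0$ on the unconstrained arc leading up to $t_1$.

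Next I would combine this with the structural results already available. By Theorem \ref{theo:active1}, once the constraint becomes active, the exit can only be at $t_i^f$, hence $u_i^*(t)=0$ and $v_i^*(t)=v_{\max}$ on $[t_1,t_i^f]$. By Theorem \ref{theo:casev_max}, $u_i^*$ is continuous at $t_1$, so $u_i^*(t_1^-)=u_i^*(t_1^+)=0$. Since on $[t_i^0,t_1]$ the optimal control takes the linear form \eqref{eq:20}, Corollary \ref{cor:active1} leaves only three possibilities for the sign pattern on this arc: (i) negative and increasing, (ii) identically zero, or (iii) positive and decreasing.

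Finally I would eliminate cases (i) and (ii). Case (i) would give $u_i^*(t)<0$ on $[t_i^0,t_1)$, so $v_i^*$ would be strictly decreasing from $v_i^0<v_{\max}$ (strict by Assumption \ref{ass:active}), contradicting $v_i^*(t_1)=v_{\max}$. Case (ii) would give $v_i^*(t)\equiv v_i^0<v_{\max}$, so the constraint would never become active. Only case (iii) remains, and combined with $u_i^*\equiv 0$ on the constrained arc this yields the claimed behavior of $u_i^*$ on the entire interval $[t_i^0,t_i^f]$.

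I do not expect any serious obstacle; the argument is structurally identical to the $v_{\min}$ case and the only delicate point is the interpretation of "positive and decreasing" on $[t_i^0,t_i^f]$ — this is to be read as strictly positive and strictly decreasing on the unconstrained arc $[t_i^0,t_1)$, matched continuously to $u_i^*\equiv 0$ on the constrained arc, consistent with Proposition \ref{prop:active1} and Theorem \ref{theo:casev_max}.
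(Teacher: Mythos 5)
Your proof is correct and follows essentially the same route as the paper, which simply mirrors the proof of Theorem~\ref{theo:casev_min_exist} (first-order condition for a maximum at $t_1$, combined with Theorem~\ref{theo:active1} and the sign/monotonicity classification of the unconstrained control in Corollary~\ref{cor:active1}). Your explicit elimination of the ``negative and increasing'' and ``identically zero'' cases, and your remark on how to read ``positive and decreasing'' across the constrained arc, merely make precise what the paper leaves implicit.
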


\subsection{Interior Constraints for Left and Right Turns}
For any CAV $i\in\mathcal{N}(t)$ that makes a left, or right turn, we need to impose interior speed constraints at the entry of the merging zone. These constraints will ensure that the CAV enters the merging zone with the corresponding allowable speed, $v_{\mathrm{entry}}$, that guarantees comfort  for the passengers, hence $v_i(t_i^m)\le v_{\mathrm{entry}}$,
where $t_i^m$ is the time that  CAV $i$ enters the merging zone. The analysis is the same as in the constrained arc $v_i(t)-v_{\mathrm{max}}\le 0$.

\begin{remark} \label{implement}
	For the implementation of the analytical solution corresponding to the combination of the above cases, we first start with the unconstrained arc and derive the solution using \eqref{eq:20} - \eqref{eq:22}. If the solution violates any of the state or control constraints, then the unconstrained arc is pieced together with the arc corresponding to the violated constraint. The two arcs yield a set of algebraic equations which are solved simultaneously using the boundary conditions of \eqref{eq:decentral} and interior constraints between the arcs. If the resulting solution, which includes the determination of the optimal switching time from one arc to the next one, violates another constraint, then the last two arcs are pieced together with the arc corresponding to the new violated constraint, and we re-solve the problem with the three arcs pieced together. The three arcs will yield a new set of algebraic equations that need to be solved simultaneously using the boundary conditions of \eqref{eq:decentral} and interior constraints between the arcs. The resulting solution includes the optimal switching time from one arc to the next one. The process is repeated until the solution does not violate any other constraints.
\end{remark}

The process of piecing the arcs together to derive the optimal solution of the low-level problem can be computational intensive and might prevent real-time implementation. Next, we discuss the upper-level optimization problem in which we seek the minimum time $t_{i}^{f}$ that guarantees an optimal solution for the low-level problem without activating any of the constraint arcs.

\section{Upper-Level Optimization}\label{sec:4}
When a CAV $i\in\mathcal{N}(t)$ with a given $o_i$, enters the control zone, it accesses the crossing protocol and solves an upper-level optimization problem. The solution of this problem yields for CAV $i$ the  time trajectory $t_{p_i}(p_i)$. 
In our exposition, we seek to derive the minimum time $t_{i}^{f}$ that CAV $i$ exits the control zone without activating any of the state and control constraints of the low-level optimization Problem \ref{problem1}. Therefore, the upper-level optimization problem  should yield a $t_{i}^{f}$ such that the solution of the low-level optimization problem will result in the unconstrained case \eqref{eq:20} - \eqref{eq:22}. 

There is an obvious trade-off between the two problems. The lower the value of $t_{i}^{f}$ in the upper-level problem the higher the value of the control input in $[t_{i}^{0}, t_{i}^{f}]$ in the low-level problem. 
The low-level problem is directly related to minimizing energy for each CAV (individually optimal solution). On the other hand, the upper-level problem is related to maximizing the throughput of the intersection, thus eliminating stop-and-go driving and travel time (system optimal solution). Therefore, by seeking a solution for the upper-level problem which guarantees  that none of the state and control constraints becomes active may be considered an appropriate compromise between the two.

\subsection{The Time Trajectory}
\label{sec:4a}
For simplicity of notation, for each CAV $i\in\mathcal{N}(t)$ we write the optimal position \eqref{eq:22} of the unconstrained arc in the following form
\begin{gather}
	p^{*}_{i}(t) = \phi_{i,3} \cdot t^3 +\phi_{i,2} \cdot t^2 + \phi_{i,1} \cdot t +\phi_{i,0} , ~ t\in [t_{i}^{0}, t_{i}^{f}], \label{eq:upper_p}%
\end{gather}
where $\phi_{i,3}\neq 0, \phi_{i,2}, \phi_{i,1}, \phi_{i,0}\in\mathbb{R}$ are the constants of integration derived in the Hamiltonian analysis in Section \ref{sec:3}. 
\begin{remark} \label{rem:3}
	For each $i\in\mathcal{N}(t)$, the optimal position \eqref{eq:upper_p} is a real-valued continuous and differentiable function $\mathbb{R}_{\ge 0}\mapsto \mathbb{R}_{\ge 0}$. Based on \eqref{speed}, it is also a strictly increasing function with respect to $t\in\mathbb{R}_{\ge 0}$.
\end{remark}
The optimal speed and control are given by
\begin{alignat}{3}
	v^{*}_{i}(t) & = 3\phi_{i,3} \cdot t^2 +2\phi_{i,2} \cdot t + \phi_{i,1}, \quad && t \in [t_{i}^{0}, t_{i}^{f}], && \label{eq:upper_v} \\
	u^{*}_{i}(t) & = 6\phi_{i,3} \cdot t + 2\phi_{i,2}, && t \in [t_{i}^{0}, t_{i}^{f}]. && \label{eq:upper_u}
\end{alignat}
Next, we investigate some properties of \eqref{eq:upper_p}.
\begin{lemma} \label{lem:1}
For each $i\in\mathcal{N}(t)$, the optimal position $p_i^*(t)$ given by \eqref{eq:upper_p} is a one-to-one function for all $t\in[t_{i}^{0}, t_{i}^{f}]$.
\end{lemma}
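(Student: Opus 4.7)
My plan is to exploit the fact that the derivative of $p_i^*(t)$ is exactly the optimal speed $v_i^*(t)$, which is bounded below by a strictly positive constant. This makes $p_i^*$ strictly monotonically increasing on $[t_i^0, t_i^f]$, and strict monotonicity immediately implies injectivity (one-to-one). The statement is essentially a corollary of the strict positivity of $v_{\min}$ combined with Remark \ref{rem:3}.

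More concretely, the first step is to invoke the dynamics \eqref{eq:model2}, which give $\dot{p}_i^*(t) = v_i^*(t)$. The second step is to apply the admissible speed constraint \eqref{speed}, which guarantees $v_i^*(t) \geq v_{\min} > 0$ for all $t\in[t_i^0,t_i^f]$. Note this is consistent with the explicit form \eqref{eq:upper_v}, since on the unconstrained arc the speed trajectory stays within the admissible speed interval by construction of Problem \ref{problem1}. The third step is then to conclude that $p_i^*$ is strictly increasing, which is precisely what Remark \ref{rem:3} already asserts.

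To convert strict monotonicity into the one-to-one property, the cleanest argument is by contradiction through the mean value theorem: suppose there exist $t_a, t_b\in[t_i^0, t_i^f]$ with $t_a < t_b$ such that $p_i^*(t_a) = p_i^*(t_b)$. Since $p_i^*$ is continuous on $[t_a,t_b]$ and differentiable on $(t_a,t_b)$ (it is a cubic polynomial in $t$), Rolle's theorem yields some $\tau\in(t_a,t_b)$ with $\dot{p}_i^*(\tau)=0$. But $\dot{p}_i^*(\tau) = v_i^*(\tau) \geq v_{\min} > 0$, a contradiction.

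There is no genuine obstacle here; the lemma is essentially a restatement of the strictly-increasing property already noted in Remark \ref{rem:3}, and the only thing to verify is that the strict positivity of $v_{\min}$ in \eqref{speed} is used in an essential way (otherwise one would need to rule out stationary points of the cubic \eqref{eq:upper_p} directly using its coefficients $\phi_{i,3},\phi_{i,2},\phi_{i,1}$, which would be more cumbersome). The role of the lemma in what follows is presumably to justify inverting $p_i^*$ so as to define the time trajectory $t_{p_i}(p_i)$ of Definition \ref{def:path} as a well-defined function of position.
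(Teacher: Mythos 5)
Your proposal is correct and follows essentially the same route as the paper: the speed constraint \eqref{speed} forces $\dot{p}_i^*(t)=v_i^*(t)\geq v_{\min}>0$, so $p_i^*$ is strictly increasing, and the mean value theorem (your Rolle's-theorem contradiction is the same device) yields injectivity. The only difference is that you spell out the intermediate steps the paper leaves implicit.
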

\begin{proof}
It follows from \eqref{speed} that, for each $i\in\mathcal{N}(t)$, $p_i^*(t)$ is a strictly increasing function with respect to $t\in\mathbb{R}_{\ge 0}$. Thus, it follows from the mean value theorem that for all $t_1, t_2 \in [t_{i}^{0}, t_{i}^{f}]$ with $t_1 \neq t_2$, we have $p_i^*(t_{1}) \neq p_i^*(t_{2})$.
\end{proof}
Clearly \eqref{eq:upper_p} is a surjective function as any cubic polynomial function always has at least one real root. Therefore, \eqref{eq:upper_p} is a bijective function and its inverse exists.
We rewrite the cubic polynomial function \eqref{eq:upper_p} as
\begin{equation}\label{eqn:upper_p_modified}
	t^3 + \frac{\phi_{i,2}}{\phi_{i,3}} t^2 + \frac{\phi_{i,1}}{\phi_{i,3}} t + \left( \frac{\phi_{i,0}}{\phi_{i,3}} - \frac{p_i}{\phi_{i,3}} \right) = 0 , ~ t\in [t_{i}^{0}, t_{i}^{f}],
\end{equation}
which then can be reduced by the substitution $t = \tau - \frac{\phi_{i,2}}{3\phi_{i,3}}$ to the normal form
\begin{equation}\label{eqn:depressed_cubic}
	\tau^3 + \omega_{i,0} \tau + \left( \omega_{i,1} + \omega_{i,2} p_i \right) = 0,
\end{equation}
where
\begin{gather}\label{eqn:omega1}
	\omega_{i,0} = \frac{\phi_{i,1}}{\phi_{i,3}} - \frac{1}{3}\left(\frac{\phi_{i,2}}{\phi_{i,3}}\right)^2, \\
	\omega_{i,1} = \frac{1}{27}\left[2\left(\frac{\phi_{i,2}}{\phi_{i,3}}\right)^3 - \frac{9 \phi_{i,2} \cdot \phi_{i,1}}{(\phi_{i,3}) ^ 2} \right] + \frac{\phi_{i,0}}{\phi_{i,3}} \label{eqn:omega2}, \\
	\omega_{i,2} = - \frac{1}{\phi_{i,3}}.
\end{gather}
We are interested in deriving the expression for the inverse function of \eqref{eq:upper_p} which can be accomplished by finding the root of \eqref{eqn:depressed_cubic}.
\begin{corollary}\label{cor:1}
	Since, for each $i \in \mathcal{N}(t)$, \eqref{eq:upper_p} is a bijective function, there exists an inverse function $p_i^*(t)^{-1}$. From Definition \ref{def:path}, the inverse function, $p_i^*(t)^{-1}$ is the time trajectory $t_{p_i}(p_i)$ that yields the time that CAV $i$ is at the position $p_i$ inside the control zone, i.e.,
	\begin{multline}\label{eq:upper_inversep}
	t_{p_i} ^ * (p_i)= \\
 \sqrt[3]{ - \frac{1}{2} \left(\omega_{i,1} + \omega_{i,2}~ p_i \right) + \sqrt{\frac{1}{4} \left(\omega_{i,1} + \omega_{i,2}~ p_i \right) ^ 2 + \frac{1}{27}\omega_{i,0} ^ 3}} +\\
\sqrt[3]{ - \frac{1}{2} \left(\omega_{i,1} + \omega_{i,2}~ p_i \right) - \sqrt{\frac{1}{4} \left(\omega_{i,1} + \omega_{i,2}~ p_i \right) ^ 2 + \frac{1}{27}\omega_{i,0} ^ 3}} \\
+ \omega_{i,3},
	\end{multline}	
	where $\omega_{i,3}, \omega_{i,2}, \omega_{i,1}$, and $\omega_{i,0}\in\mathbb{R}$
	such that we have $\omega_{i,3} = - \frac{\phi_{i,2}}{3 \phi_{i,3}}$ and $\frac{1}{4}(\omega_{i,1} + \omega_{i,2}~ p_i) ^ 2 + \frac{1}{27}\omega_{i,0} ^ 3 > 0$.
\end{corollary}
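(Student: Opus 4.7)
The plan is to split the argument into two parts: first establish the existence of the inverse function, then derive its closed-form expression via Cardano's formula applied to the depressed cubic \eqref{eqn:depressed_cubic}.

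For existence, I would observe that Lemma \ref{lem:1} already establishes that $p_i^*(t)$ is injective on $[t_i^0, t_i^f]$, and the discussion immediately preceding the corollary notes that \eqref{eq:upper_p} is surjective since any cubic with $\phi_{i,3}\neq 0$ attains every real value. Hence $p_i^*(t)$ is bijective, and so an inverse function $(p_i^*)^{-1}$ exists as a well-defined map. By Definition \ref{def:path}, this inverse is precisely the time trajectory $t_{p_i}(p_i)$, giving the first part of the claim for free.

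For the explicit formula, I would start from the normal form \eqref{eqn:depressed_cubic},
\begin{equation*}
\tau^3 + \omega_{i,0}\,\tau + (\omega_{i,1} + \omega_{i,2}\,p_i) = 0,
\end{equation*}
obtained from \eqref{eq:upper_p} by the Tschirnhaus substitution $t = \tau - \phi_{i,2}/(3\phi_{i,3})$. This is a depressed cubic of the form $\tau^3 + P\tau + Q = 0$ with $P = \omega_{i,0}$ and $Q = \omega_{i,1} + \omega_{i,2}\, p_i$. The key step is to invoke Cardano's formula: when the discriminant $\Delta := Q^2/4 + P^3/27$ is strictly positive, the cubic has exactly one real root, given by
\begin{equation*}
\tau = \sqrt[3]{-Q/2 + \sqrt{\Delta}} + \sqrt[3]{-Q/2 - \sqrt{\Delta}},
\end{equation*}
with real cube roots. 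Substituting $P$, $Q$, and $\Delta$ and then undoing the substitution via $t = \tau + \omega_{i,3}$ with $\omega_{i,3} = -\phi_{i,2}/(3\phi_{i,3})$ reproduces \eqref{eq:upper_inversep} exactly, which is the statement we want.

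The main subtlety, and the step I expect to require the most care, is the discriminant condition $\tfrac{1}{4}(\omega_{i,1} + \omega_{i,2}\, p_i)^2 + \tfrac{1}{27}\omega_{i,0}^{3} > 0$. Strict positivity of the discriminant is precisely the regime in which Cardano's formula yields a single real root expressible through real radicals (the casus irreducibilis with three real roots, which corresponds to negative discriminant, would require trigonometric substitution and would conflict with bijectivity on all of $\mathbb{R}$). Since the bijectivity established in Lemma \ref{lem:1} guarantees that exactly one real preimage exists for each $p_i$, consistency demands that $\Delta \ge 0$; the corollary states $\Delta > 0$ as a hypothesis ensuring the formula is well-defined and real-valued. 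I would therefore present the derivation under this stated hypothesis without attempting to prove it in full generality, and simply note that it holds automatically whenever $\omega_{i,0} \ge 0$, i.e., whenever $\phi_{i,1}/\phi_{i,3} \ge \tfrac{1}{3}(\phi_{i,2}/\phi_{i,3})^2$, which is the typical regime encountered by the boundary conditions \eqref{eq:bound}.
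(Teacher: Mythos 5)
Your proposal is correct and follows essentially the same route as the paper: the paper's proof simply invokes Cardano's method on the depressed cubic \eqref{eqn:depressed_cubic} and omits the algebra, treating the discriminant positivity as a stated hypothesis exactly as you do. Your additional remarks on bijectivity (via Lemma \ref{lem:1}) and on when the discriminant condition holds are sound elaborations of the same argument, not a different approach.
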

\begin{proof}
	Using the Cardano method for cubic polynomials, we can derive the algebraic solution of the cubic equation. This yields the inverse function for bijective cubic polynomial function for each $i\in\mathcal{N}(t)$ defined in the closed interval $[t_i^0, t_i^f]$. The algebra is tedious but standard, and thus, we omit the derivation.
\end{proof}
\begin{lemma} \label{lem:2}
	Let $t_{p_i}(p_i^*) = p^{*}_{i}(t)^{-1}$ be the time trajectory for each $i\in\mathcal{N}(t)$. Then the constants $\phi_{i,3}$, $ \phi_{i,2}$, $ \phi_{i,1}$, $ \phi_{i,0}\in\Phi_i$, $\Phi_i\subset\mathbb{R}$, with $\phi_{i,3} \neq 0,$ can be derived by $\omega_{i,3}, $ $\omega_{i,2}$, $ \omega_{i,1}$, $ \omega_{i,0}\in\Omega_i$, $\Omega_i\subset\mathbb{R},$ from the following equations:	$\phi_{i,0}  = \frac{-\omega_{i,1}+\omega_{i,0}~\omega_{i,3}-\omega_{i,3}^3}{\omega_{i,2} ^2}$,	$\phi_{i,1}  = - \frac{\omega_{i,0} + \omega_{i,3} ^ 2}{\omega_{i,2}}, $ $\phi_{i,2} = \frac{3\omega_{i,3}}{\omega_{i,2}}$, and	$\phi_{i,3}  = - \frac{1}{\omega_{i,2}}$.
\end{lemma}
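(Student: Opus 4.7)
The plan is to invert the four algebraic relations immediately preceding the lemma statement, namely $\omega_{i,2}=-1/\phi_{i,3}$, $\omega_{i,3}=-\phi_{i,2}/(3\phi_{i,3})$, together with the displayed formulas \eqref{eqn:omega1} for $\omega_{i,0}$ and \eqref{eqn:omega2} for $\omega_{i,1}$. These four equations express the map $(\phi_{i,3},\phi_{i,2},\phi_{i,1},\phi_{i,0})\mapsto(\omega_{i,3},\omega_{i,2},\omega_{i,1},\omega_{i,0})$ and, crucially, are triangular when read in the order $\omega_{i,2}\to\omega_{i,3}\to\omega_{i,0}\to\omega_{i,1}$: each subsequent relation introduces exactly one new $\phi$-variable. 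This triangular structure is what makes the whole lemma a matter of back-substitution rather than a genuine polynomial system solve.

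First I would recover $\phi_{i,3}$ directly from $\omega_{i,2}=-1/\phi_{i,3}$, yielding $\phi_{i,3}=-1/\omega_{i,2}$; this is well-defined because $\phi_{i,3}\neq 0$ (the cubic in \eqref{eq:upper_p} is genuine), which forces $\omega_{i,2}\neq 0$. Next, from $\omega_{i,3}=-\phi_{i,2}/(3\phi_{i,3})$ I would substitute the expression just obtained to get $\phi_{i,2}=3\omega_{i,3}/\omega_{i,2}$, along with the auxiliary identity $\phi_{i,2}/\phi_{i,3}=-3\omega_{i,3}$ that feeds into the remaining steps.

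For $\phi_{i,1}$, I would invert \eqref{eqn:omega1} by substituting $\phi_{i,2}/\phi_{i,3}=-3\omega_{i,3}$ into $\omega_{i,0}=\phi_{i,1}/\phi_{i,3}-\tfrac{1}{3}(\phi_{i,2}/\phi_{i,3})^2$, solving for $\phi_{i,1}/\phi_{i,3}$, and finally multiplying by $\phi_{i,3}=-1/\omega_{i,2}$ to read off the claimed closed form. For $\phi_{i,0}$, I would invert \eqref{eqn:omega2} by isolating the $\phi_{i,0}/\phi_{i,3}$ term, substituting the already-recovered values of $\phi_{i,3},\phi_{i,2},\phi_{i,1}$ into the remaining two cubic/mixed terms $(\phi_{i,2}/\phi_{i,3})^3$ and $\phi_{i,2}\phi_{i,1}/\phi_{i,3}^2$, and collecting. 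Multiplying through by $\phi_{i,3}$ and using $\phi_{i,3}=-1/\omega_{i,2}$ twice then produces the $\omega_{i,2}^2$ in the denominator claimed for $\phi_{i,0}$.

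There is no conceptual obstacle here: the proof is essentially a chain of substitutions, and one could even invoke the implicit function theorem or just directly observe that the Jacobian of the map $\phi\mapsto\omega$ is triangular with nonzero diagonal. The only delicate step, and the one I would carry out most carefully, is the simplification of the $\omega_{i,1}$ identity when recovering $\phi_{i,0}$, because the interplay of the $1/27$ and the $9$ coefficients together with the cubic $(\phi_{i,2}/\phi_{i,3})^3=-27\omega_{i,3}^3$ must cancel cleanly against $(\phi_{i,2}\phi_{i,1})/(3\phi_{i,3}^2)$; a lost sign or misplaced factor at that point is the most likely source of error, so I would verify the coefficients by an independent symbolic check before stating the final formula.
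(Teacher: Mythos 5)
Your proposal is correct and takes essentially the same route as the paper, whose proof merely lists the four defining relations \eqref{eq_omega0}--\eqref{eq_omega3} and declares the inversion ``tedious but straightforward'' algebra that it omits; your back-substitution in the triangular order $\omega_{i,2}\to\omega_{i,3}\to\omega_{i,0}\to\omega_{i,1}$ is precisely that omitted computation. Your closing caveat about checking coefficients is well placed: carrying the substitution through actually yields $\phi_{i,1}=-\frac{\omega_{i,0}+3\omega_{i,3}^2}{\omega_{i,2}}$ and $\phi_{i,0}=\frac{-\omega_{i,1}+\omega_{i,0}\,\omega_{i,3}+\omega_{i,3}^3}{\omega_{i,2}}$, so the missing factor of $3$, the sign of $\omega_{i,3}^3$, and the exponent on $\omega_{i,2}$ in the lemma as printed appear to be typographical slips rather than something your method would reproduce.
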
 
\begin{proof}
	We have
	\begin{gather}
	\omega_{i,0} = \frac{\phi_{i,1}}{\phi_{i,3}} - \frac{1}{3}\left(\frac{\phi_{i,2}}{\phi_{i,3}}\right)^2, \label{eq_omega0} \\
	\omega_{i,1} = \frac{1}{27}\left[2\left(\frac{\phi_{i,2}}{\phi_{i,3}}\right)^3 - \frac{9 \phi_{i,2} \cdot \phi_{i,1}}{(\phi_{i,3}) ^ 2} \right] + \frac{\phi_{i,0}}{\phi_{i,3}}, \label{eq_omega1} \\
	\omega_{i,2} = - \frac{1}{\phi_{i,3}}, \label{eq_omega2} \\ 
	\omega_{i,3} =  - \frac{\phi_{i,2}}{3 \phi_{i,3}}. \label{eq_omega3}
	\end{gather}
	After some algebraic manipulations and rearrangements, the result follows. The algebra is tedious but quite straightforward, and thus, we omit the derivation.
\end{proof}
\begin{corollary} \label{cor:ff}
	For each $i\in\mathcal{N}(t)$, the time trajectory $t_{p_i}(p_i^*)$ is a function of  $\phi_{i,3}$, $ \phi_{i,2}$, $ \phi_{i,1}$, $ \phi_{i,0}\in\Phi_i$, $\Phi_i\subset\mathbb{R}$, with $\phi_{i,3} \neq 0$.
\end{corollary}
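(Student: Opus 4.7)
The plan is to obtain the statement as a direct composition of the two preceding results, Corollary \ref{cor:1} and Lemma \ref{lem:2}, without any fresh analytic work. First I would observe that Corollary \ref{cor:1} supplies an explicit closed-form expression of the time trajectory $t_{p_i}^*(p_i)$ as a function of the four real parameters $\omega_{i,0},\omega_{i,1},\omega_{i,2},\omega_{i,3}$, valid on the interval $p_i\in p_i^*([t_i^0,t_i^f])$ under the discriminant positivity assumed there. In symbols, there is a map $F:\Omega_i\times\mathcal{P}_i\to[t_i^0,t_i^f]$ with $t_{p_i}^*(p_i)=F(\omega_{i,0},\omega_{i,1},\omega_{i,2},\omega_{i,3},p_i)$.

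Next I would invoke Lemma \ref{lem:2}, which gives four algebraic identities expressing each $\omega_{i,j}$ uniquely in terms of $\phi_{i,0},\phi_{i,1},\phi_{i,2},\phi_{i,3}$; equivalently, there is a map $G:\Phi_i\to\Omega_i$, $G(\phi_{i,0},\phi_{i,1},\phi_{i,2},\phi_{i,3})=(\omega_{i,0},\omega_{i,1},\omega_{i,2},\omega_{i,3})$, whose component formulas are precisely \eqref{eq_omega0}--\eqref{eq_omega3}. The hypothesis $\phi_{i,3}\neq 0$ is exactly what is needed for $G$ to be well-defined, since $\omega_{i,0},\omega_{i,1},\omega_{i,2},\omega_{i,3}$ all involve division by $\phi_{i,3}$ (and $\omega_{i,2}=-1/\phi_{i,3}$ in particular requires $\phi_{i,3}\neq 0$).

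The conclusion then follows by composition: define $\tilde F:\Phi_i\times\mathcal{P}_i\to[t_i^0,t_i^f]$ by $\tilde F(\phi_{i,0},\phi_{i,1},\phi_{i,2},\phi_{i,3},p_i):=F\bigl(G(\phi_{i,0},\phi_{i,1},\phi_{i,2},\phi_{i,3}),p_i\bigr)$. Substituting the $\omega$-to-$\phi$ identities of Lemma \ref{lem:2} into the Cardano expression \eqref{eq:upper_inversep} from Corollary \ref{cor:1} gives $t_{p_i}(p_i^*)=\tilde F(\phi_{i,0},\phi_{i,1},\phi_{i,2},\phi_{i,3},p_i)$, which is the claim.

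There is essentially no obstacle here beyond making the composition explicit; the only issue worth flagging is that the domain of the composed map has to respect both the cubic-root discriminant condition $\tfrac14(\omega_{i,1}+\omega_{i,2}p_i)^2+\tfrac{1}{27}\omega_{i,0}^3>0$ from Corollary \ref{cor:1} and the invertibility constraint $\phi_{i,3}\neq 0$ from Lemma \ref{lem:2}. I would therefore state the result for $(\phi_{i,0},\phi_{i,1},\phi_{i,2},\phi_{i,3})\in\Phi_i$ with the tacit understanding that $\Phi_i$ is the subset on which the induced $\omega$-tuple lies in the admissible region of $F$; this is consistent with the fact that in the Hamiltonian derivation of Section \ref{sec:3} the constants of integration are determined by the boundary data \eqref{eq:bound}, so $\Phi_i$ is automatically nonempty and the compatibility is built in.
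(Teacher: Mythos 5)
Your proposal is correct and matches the paper's (implicit) justification: the paper states Corollary \ref{cor:ff} without a separate proof precisely because it follows by composing the Cardano expression \eqref{eq:upper_inversep} of Corollary \ref{cor:1} with the $\omega$-to-$\phi$ relations \eqref{eq_omega0}--\eqref{eq_omega3} from Lemma \ref{lem:2}, with $\phi_{i,3}\neq 0$ guaranteeing the substitution is well-defined. Your additional remark about the discriminant condition restricting the effective domain $\Phi_i$ is a reasonable clarification but does not change the argument.
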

\begin{remark} \label{rem:5}
	The time trajectory $t_{p_i}(p_i^*) \in[t_{i}^{0}, t_{i}^{f}]$, yields the time that CAV $i\in\mathcal{N}(t)$ is at the position $p^{*}_{i}(t)$ inside the control zone. 
\end{remark}
\begin{lemma} \label{lem3}
	For each $i\in\mathcal{N}(t)$, the domain of $t_{p_i}(p_i^*)$ is the closed interval $[p_i(t_{i}^{0}),p_i(t_{i}^{f})]$.
\end{lemma}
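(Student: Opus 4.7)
The plan is to establish the domain of $t_{p_i}$ by exploiting the fact that it is defined as the inverse of $p_i^*(t)$ restricted to $[t_i^0, t_i^f]$, so its domain must coincide with the image of this restriction. The key inputs are already in hand: Remark 3 tells us that $p_i^*(t)$ is a real-valued continuous function of $t$, and the speed constraint \eqref{speed} guarantees $\dot p_i^*(t)=v_i^*(t) > 0$, so $p_i^*$ is strictly increasing on $[t_i^0, t_i^f]$. Lemma 1 then confirms that the restriction of $p_i^*$ to this interval is one-to-one, and Corollary 1 defines $t_{p_i}^*$ as its inverse.

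First, I would invoke continuity and strict monotonicity of $p_i^*(t)$ on the compact interval $[t_i^0, t_i^f]$. Second, I would apply the intermediate value theorem (together with strict monotonicity) to conclude that the image $p_i^*\!\left([t_i^0, t_i^f]\right)$ is exactly the closed interval $[p_i^*(t_i^0), p_i^*(t_i^f)] = [p_i(t_i^0), p_i(t_i^f)]$; no value outside this interval is attained, and every value inside it is attained exactly once. Third, I would use the elementary fact that the domain of the inverse of a bijection equals the codomain (i.e., the image) of the bijection, which immediately gives $\mathrm{dom}(t_{p_i}) = [p_i(t_i^0), p_i(t_i^f)]$.

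There is essentially no technical obstacle here; the lemma is a direct bookkeeping consequence of the bijection already established in Lemma 1 and Corollary 1, combined with the monotonicity implied by \eqref{speed}. The only mild subtlety worth being explicit about is handling the closed endpoints carefully: because $p_i^*$ is continuous on the closed interval $[t_i^0, t_i^f]$, the image includes both $p_i(t_i^0)$ and $p_i(t_i^f)$, so the domain of the inverse is genuinely the closed interval rather than an open or half-open one. The argument is therefore at most a few lines and requires no additional constructions beyond citing the prior results.
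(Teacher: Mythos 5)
Your proposal is correct and follows essentially the same route as the paper: strict monotonicity of $p_i^*(t)$ (from the speed constraint) together with the Intermediate Value Theorem gives that the image of $[t_i^0,t_i^f]$ is exactly the closed interval $[p_i(t_i^0),p_i(t_i^f)]$, which is then the domain of the inverse $t_{p_i}$. Your version is slightly more explicit than the paper's in noting that continuity (not just monotonicity) is what the IVT requires and in spelling out that the domain of the inverse equals the image of the bijection, but these are refinements of the same argument, not a different one.
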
 
\begin{proof}
	Since, for each $i\in\mathcal{N}(t)$, $p_i^*(t)$ is a strictly increasing function in $[t_{i}^{0}, t_{i}^{f}]$, then by the Intermediate Value Theorem, $p_i^*(t)$ takes values on the closed interval $[p_i(t_{i}^{0}),p_i(t_{i}^{f})]$. 
\end{proof}
\begin{corollary} \label{cor:3}
	For each $i\in\mathcal{N}(t)$, $\dot{p}\big(p^{*}_{i}(t)^{-1} \big)\neq 0$ for all $p\in [p_i(t_{i}^{0}),p_i(t_{i}^{f})]$. Hence, $t_{p_i}(p_i^*) $ is differentiable in $[p_i(t_{i}^{0}),p_i(t_{i}^{f})]$.
\end{corollary}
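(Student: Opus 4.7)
The plan is to deduce this corollary as a direct application of the inverse function theorem, using as input two facts already established: (i) the bijectivity and smoothness of $p_i^*(t)$ on $[t_i^0,t_i^f]$ noted in Remark~\ref{rem:3} and Lemma~\ref{lem:1}, and (ii) the lower speed bound coming from constraint~\eqref{speed}. The first step is to observe that from the CAV dynamics~\eqref{eq:model2} we have $\dot{p}_i^*(t)=v_i^*(t)$, and from~\eqref{speed} we have $v_i^*(t)\ge v_{\min}>0$ for all $t\in[t_i^0,t_i^f]$. Therefore $\dot{p}_i^*(t)$ is bounded away from zero on the entire interval, hence in particular never vanishes.

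Next, I would translate this into the statement of the corollary. By Lemma~\ref{lem3}, for any $p\in[p_i(t_i^0),p_i(t_i^f)]$ there exists a unique $t=t_{p_i}(p^*)\in[t_i^0,t_i^f]$ with $p_i^*(t)=p$, i.e., $t=p_i^*(t)^{-1}(p)$ in the notation of the corollary. Evaluating the derivative at that preimage gives
\begin{equation*}
\dot{p}\bigl(p_i^*(t)^{-1}(p)\bigr)=v_i^*\bigl(t_{p_i}(p^*)\bigr)\ge v_{\min}>0,
\end{equation*}
which establishes the first conclusion.

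For the differentiability claim, I would invoke the inverse function theorem. The position $p_i^*(t)$ is polynomial in $t$ by~\eqref{eq:upper_p}, so it is $C^\infty$; combined with the nonvanishing of $\dot{p}_i^*$ just proved, the theorem guarantees that the inverse $t_{p_i}(p^*)=p_i^*(t)^{-1}$ is differentiable on $[p_i(t_i^0),p_i(t_i^f)]$, with
\begin{equation*}
\frac{d\,t_{p_i}(p^*)}{dp}=\frac{1}{\dot{p}_i^*\bigl(t_{p_i}(p^*)\bigr)}=\frac{1}{v_i^*\bigl(t_{p_i}(p^*)\bigr)}.
\end{equation*}
This finishes the argument. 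There is really no major obstacle here: the whole statement reduces to the observation that the speed is strictly positive (a standing constraint of the problem), so the derivative of the position map never vanishes and the inverse function theorem applies verbatim. The only small care to take is notational, namely to interpret $\dot{p}(p_i^*(t)^{-1})$ as the time-derivative of position evaluated at the time $t_{p_i}(p^*)$ and to verify endpoint behavior, both of which follow from the compactness of $[t_i^0,t_i^f]$ and the continuity of $v_i^*$.
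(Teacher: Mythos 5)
Your argument is correct and is essentially the one the paper intends: the corollary follows from the speed constraint \eqref{speed} forcing $\dot{p}_i^*(t)=v_i^*(t)\ge v_{\min}>0$ (the same fact used in Remark~\ref{rem:3} and Lemma~\ref{lem:1}), after which the inverse function theorem gives differentiability of $t_{p_i}(p_i^*)$. The paper states the corollary without an explicit proof, and your write-up supplies exactly the intended reasoning, including the correct reading of $\dot{p}\bigl(p_i^*(t)^{-1}\bigr)$ as the time-derivative evaluated at the preimage time.
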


\begin{corollary} \label{cor:4}
	For each $i\in\mathcal{N}(t)$, $t_{p_i}(p_i^*) $ is a strictly increasing function in $[p_i(t_{i}^{0}),p_i(t_{i}^{f})]$.	
\end{corollary}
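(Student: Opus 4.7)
The plan is to exploit the fact that $t_{p_i}(p_i^*)$ is, by construction in Corollary \ref{cor:1}, the inverse of the optimal position $p_i^*(t)$, and then invoke the elementary principle that the inverse of a strictly increasing bijection is strictly increasing.

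First, I would recall from Remark \ref{rem:3} that $p_i^*(t)$ is strictly increasing on $[t_i^0, t_i^f]$; this is a direct consequence of the lower speed constraint \eqref{speed}, since $\dot{p}_i^*(t) = v_i^*(t) \geq v_{\min} > 0$ for every $t \in [t_i^0, t_i^f]$. By Lemma \ref{lem3}, the inverse $t_{p_i}(p_i^*)$ is defined on $[p_i(t_i^0), p_i(t_i^f)]$.

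Second, I would establish strict monotonicity of the inverse by a short contradiction argument: pick any $p_1, p_2 \in [p_i(t_i^0), p_i(t_i^f)]$ with $p_1 < p_2$, assume for contradiction that $t_{p_i}(p_1) \geq t_{p_i}(p_2)$, and apply the strictly increasing function $p_i^*(\cdot)$ to both sides to obtain $p_1 \geq p_2$, contradicting the choice of $p_1, p_2$. An equally clean alternative, closer in spirit to Corollary \ref{cor:3}, is to invoke the inverse function theorem directly:
\begin{equation*}
\frac{d}{dp_i}\, t_{p_i}(p_i^*) \;=\; \frac{1}{\dot{p}_i^*\bigl(t_{p_i}(p_i^*)\bigr)} \;=\; \frac{1}{v_i^*\bigl(t_{p_i}(p_i^*)\bigr)} \;>\; 0,
\end{equation*}
since $v_i^*(t) \geq v_{\min} > 0$ by \eqref{speed}, which immediately gives strict monotonicity on the closed interval.

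There is no real obstacle here — the corollary is a direct consequence of the results already established (Remark \ref{rem:3}, Corollary \ref{cor:1}, Lemma \ref{lem3}, Corollary \ref{cor:3}) together with the positivity of $v_{\min}$. The only thing worth being careful about is to state the contradiction (or the inverse function theorem application) on the closed interval $[p_i(t_i^0), p_i(t_i^f)]$, matching the domain identified in Lemma \ref{lem3}, rather than on the open interior.
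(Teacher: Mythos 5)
Your proof is correct and matches the argument the paper implicitly relies on: the paper states Corollary \ref{cor:4} without proof, treating it as an immediate consequence of $p_i^*(t)$ being strictly increasing (Remark \ref{rem:3}, via $v_i^*(t)\ge v_{\min}>0$) and $t_{p_i}(p_i^*)$ being its inverse on the domain given in Lemma \ref{lem3}. Both your contradiction argument and your inverse-function-theorem variant are valid routes to the same conclusion.
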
 

\subsection{Optimization Framework}
\label{sec:4b}
In what follows, for each CAV $i\in\mathcal{N}(t)$,  we formulate a constrained optimization problem to yield its optimal path in $[t_i^0, t_i^f]$. We start our exposition with the introduction of the cost function and proceed with the equality and inequality constraints.

\paragraph{Cost Function.}
We seek to derive the minimum time $t_{i}^{f^*}$ that a CAV $i\in\mathcal{N}(t)$ exits the control zone without activating any of the state and control constraints of the low-level optimization Problem \ref{problem1}, i.e., $t_{i}^{f^*}$ should yield \eqref{eq:20} - \eqref{eq:22}.
For each CAV $i$, the minimum time $t_{i}^{f^*}$ can be derived by minimizing the time trajectory $t_{p_i}(p_i^*)$, given by \eqref{eq:upper_inversep} and evaluated at $p_i^f$.  

For any fixed $p_i\in[p_i^0, p_i^f]$ of $i\in\mathcal{N}(t)$, since the time trajectory $t_{p_i}(p_i^*)$ is a function of $\phi_i$ (Corollary \ref{cor:ff}), if we vary the constants  $\phi_i$ the time that  $i$ is at the position $p_i$ changes.
Hence, in our analysis, we construct the function $f_i:\Phi_i\to[t_i^0, t_i^f]$, which evaluates the time trajectory at $p_i^f$ and yields that time that each CAV $i$ is located at $p_i^f$ with respect to the variables $\phi_i$, i.e.,
	\begin{gather}\label{eq:path_f}
	f_i(\phi_i) = t_{p_i}(p_i^{f}).
	\end{gather}
Therefore, to derive the minimum time $t_{i}^{f^*}$ for a CAV $i$, we seek to minimize $f_i(\phi_i)$, with respect to $\phi_i= (\phi_{i,3}, \phi_{i,2}, \phi_{i,1}, \phi_{i,0})$.

\begin{proposition}\label{pro:cost}
	The function $f_i(\phi_i)$ is convex.
\end{proposition}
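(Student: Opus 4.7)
The plan is to establish convexity of $f_i$ by treating it as an implicitly defined function and analyzing its Hessian. The key observation is that $f_i(\phi_i)$ is characterized by the relation
\begin{equation*}
\phi_{i,3}\,f_i^3 + \phi_{i,2}\,f_i^2 + \phi_{i,1}\,f_i + \phi_{i,0} = p_i^f,
\end{equation*}
i.e., $p_i^*(f_i(\phi_i);\phi_i)=p_i^f$. Because $\partial_t p_i^*(t;\phi_i)=v_i^*(t)>0$ by \eqref{speed} and Remark \ref{rem:3}, the implicit function theorem applies on the feasible domain, so that $f_i$ is smooth in $\phi_i$. Lemma \ref{lem:1} and Corollary \ref{cor:3} guarantee that $f_i$ is uniquely defined and differentiable.

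First, I would differentiate the defining relation once with respect to each $\phi_{i,k}$ to obtain the first-order partials in the compact form
\begin{equation*}
\frac{\partial f_i}{\partial \phi_{i,k}} = -\,\frac{f_i^{\,k}}{v_i^*(f_i)},\qquad k=0,1,2,3.
\end{equation*}
Second, I would differentiate a second time and, after using $\partial_{\phi_{i,l}} v_i^*(f_i;\phi_i) = l f_i^{l-1} + u_i^*(f_i)\,\partial_{\phi_{i,l}} f_i$, collect terms into the closed form
\begin{equation*}
\frac{\partial^2 f_i}{\partial\phi_{i,l}\,\partial\phi_{i,k}} \;=\; \frac{f_i^{\,k+l-1}\bigl[(k+l)\,v_i^*(f_i)-u_i^*(f_i)\,f_i\bigr]}{\bigl(v_i^*(f_i)\bigr)^3}.
\end{equation*}
Contracting the Hessian $H$ with an arbitrary $x\in\mathbb{R}^4$ and introducing $y(s):=\sum_{k=0}^{3}x_k s^k$ so that $y(f_i)=\sum_k x_k f_i^k$ and $y'(f_i)=\sum_k k x_k f_i^{k-1}$ reduces the quadratic form to
\begin{equation*}
x^{\top} H x \;=\; \frac{2\,v_i^*(f_i)\,y(f_i)\,y'(f_i)\;-\;u_i^*(f_i)\,y(f_i)^2}{\bigl(v_i^*(f_i)\bigr)^3}.
\end{equation*}

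The hard part is precisely the sign of this scalar: the denominator is positive by \eqref{speed}, but the numerator is not obviously nonnegative for generic $x$. I expect the argument to exploit the boundary-condition constraints \eqref{eq:bound}, which tie $\phi_{i,0},\phi_{i,1}$ to $(p_i^0,v_i^0,t_i^0)$ and thereby restrict the admissible variations in $\phi_i$ to an affine subspace on which $y$ is forced to vanish at $t_i^0$. This in turn couples $y(f_i)$ to $y'(f_i)$ in a way that can be used to absorb the $u_i^*\,y^2$ term, using that $u_i^*(t)=\dot v_i^*(t)$ on the unconstrained arc.

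A complementary route, which I would keep in reserve, is to work directly with the Cardano representation \eqref{eq:upper_inversep}: the argument $\omega_{i,1}+\omega_{i,2}\,p_i^f$ is a rational function of $\phi_i$ (Lemma \ref{lem:2}), and the outer map $A\mapsto \sqrt[3]{A+\sqrt{A^2+c}}+\sqrt[3]{A-\sqrt{A^2+c}}$ is a concave/convex function of $A$ on the branch selected in Corollary \ref{cor:1}. Establishing convexity via the composition rule would avoid the Hessian computation, but it requires a careful justification of the convex composition on the relevant branch; I view this as a fallback if the Hessian-based route becomes intractable.
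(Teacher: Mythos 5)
Your calculus is correct as far as it goes: with $f_i$ defined implicitly by $p_i^*(f_i;\phi_i)=p_i^f$, one does get $\partial f_i/\partial\phi_{i,k}=-f_i^{\,k}/v_i^*(f_i)$, the stated second derivatives, and the contraction $x^{\top}Hx=\bigl(2v_i^*(f_i)\,y(f_i)\,y'(f_i)-u_i^*(f_i)\,y(f_i)^2\bigr)/\bigl(v_i^*(f_i)\bigr)^3$. But the proposal stops exactly at the step that would constitute the proof, and that step cannot be completed as you envision. From your own Hessian formula, the $2\times 2$ principal minor in the $(\phi_{i,0},\phi_{i,1})$ coordinates is $H_{00}H_{11}-H_{01}^2=\bigl[(-u_i^*)\,f_i(2v_i^*-u_i^*f_i)-(v_i^*-u_i^*f_i)^2\bigr]/(v_i^*)^6=-1/(v_i^*)^4<0$ at every point, for every value of $u_i^*$; hence $H$ is indefinite everywhere and $f_i$ is not convex on any open subset of $\mathbb{R}^4$. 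The rescue you sketch, restricting variations to the subspace fixed by the initial conditions so that $y(t_i^0)=y'(t_i^0)=0$, i.e.\ $y(s)=(s-t_i^0)^2(\alpha s+\beta)$, does not repair the sign: choosing $\alpha<0$ and $\beta$ with $\alpha f_i+\beta=\varepsilon>0$ small gives $y(f_i)=O(\varepsilon)>0$ and $y'(f_i)<0$ bounded away from zero times $\varepsilon^0$ in the bracket, so the $O(\varepsilon)$ term $2v_i^*yy'<0$ dominates the $O(\varepsilon^2)$ term $u_i^*y^2$ and the quadratic form is negative. The Cardano fallback is also not viable as a convex-composition argument, because by Lemma \ref{lem:2} both $\omega_{i,0}$ and $\omega_{i,1}+\omega_{i,2}p_i^f$ are non-affine rational functions of $\phi_i$, so the standard composition rules have no foothold on \eqref{eq:upper_inversep}.

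For comparison, the paper's own proof takes a completely different and much shorter route: it fixes $t$, observes that \eqref{eq:upper_p} is then affine (hence convex) in $\phi_i$, and concludes because ``$f_i(\phi_i)$ is the inverse of $p_i(\phi_i)$.'' That inversion is in the time variable, not in $\phi_i$, and convexity is not preserved under it; your Hessian computation is precisely the honest version of that step and shows the conclusion does not follow without further restrictions on $\Phi_i$. So your approach is more rigorous than the paper's, but as written it is a plan whose decisive step fails rather than a proof; if you pursue it, the useful outcome is to identify the restricted set of admissible $\phi_i$ (if any) on which the quadratic form is nonnegative, not to assert convexity on $\Phi_i$ at large.
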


\begin{proof}
    If we fix the time in \eqref{eq:upper_p} and vary $\phi_i= (\phi_{i,3}, \phi_{i,2}, \phi_{i,1}, \phi_{i,0})$, then \eqref{eq:upper_p} is an affine function denoted as $p_i(\phi_i)$. The variables $\phi_i$ take values from a closed subset $\Phi_i$ of $\mathbb{R}^4$,
	Similarly, the image of $p_i(\phi_i)$ is a closed subset of $\mathbb{R}$.
	For any $\kappa\in[0,1]$, for a fixed time $\tau\in [t_{i}^{0}, t_{i}^{f}]$, and for any $\phi_i, \phi_i'\in \Phi_i$, we have
	\begin{align}
	& \kappa \big(\phi_{i,3} \cdot \tau^3 +\phi_{i,2} \cdot \tau^2 + \phi_{i,1} \cdot \tau +\phi_{i,0}\big) +(1-\kappa) \big(\phi_{i,3}' \cdot t^3 \nonumber\\
	&+\phi_{i,2}' \cdot t^2 + \phi_{i,1}' \cdot t +\phi_{i,0}'\big) =
	\kappa \phi_{i,3} \tau^3 +(1-\kappa) \phi_{i,3}' \tau^3 \nonumber\\
	&+\kappa \phi_{i,2} \tau^2 +(1-\kappa) \phi_{i,2}' \tau^2 + \kappa \phi_{i,1} \tau +(1-\kappa) \phi_{i,1}' \tau + \kappa \phi_{i,0} \nonumber\\
	&+(1-\kappa) \phi_{i,0}'. \label{pro:cost_proof}
	\end{align}
	Hence, $p_i(\phi_i)$ is a convex function. Since $f_i(\phi_i)$ is the inverse of $p_i(\phi_i)$ (Lemma \ref{lem:2}), the result follows.
\end{proof}

\paragraph{Equality Constraints.}
The initial and final conditions \eqref{eq:bound} at the entry and exit of the control zone respectively along with the interior constraint $p_(t_i ^ m) = p_i^m$, at the time $t_i^m$ that CAV $i$ enters the merging zone (in case of left or right turns), designate the equality constraints. Thus,
\begin{align} 
h_i^{(1)}(\phi_i) &= \phi_{i,3} \cdot (t_i^0)^3 +\phi_{i,2} \cdot(t_i^0)^2 + \phi_{i,1} \cdot t_i^0+\phi_{i,0} =0, \nonumber\\
h_i^{(2)}(\phi_i) &= \phi_{i,3} \cdot (t_i^f)^3 +\phi_{i,2} \cdot (t_i^f)^2 + \phi_{i,1} \cdot t^f_i +\phi_{i,0} -p_i^f \nonumber\\
& = 0,\nonumber \\
h_i^{(3)}(\phi_i) &=  3\cdot \phi_{i,3} \cdot (t_i^0)^2 +2\cdot \phi_{i,2} \cdot t_i^0 + \phi_{i,1}  -v_i^0=0, \nonumber\\
h_i^{(4)}(\phi_i) &= 6\cdot \phi_{i,3} \cdot t_i^f +2\cdot \phi_{i,2}=0, \nonumber\\
h_i^{(5)}(\phi_i) &= \phi_{i,3} \cdot (t_i^m)^3 +\phi_{i,2} \cdot (t_i^m)^2 + \phi_{i,1} \cdot t_i^m +\phi_{i,0} -p_i^m\nonumber\\&=0.\label{eq:con6}
\end{align}

\begin{proposition} \label{pro:eq}
	The functions $h_i^{(r)}(\phi_i), r=1,\dots, 5$ are convex.
\end{proposition}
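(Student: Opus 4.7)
The plan is to observe that, from the standpoint of CAV $i$ when it enters the control zone, all of the quantities $t_i^0$, $t_i^f$, $t_i^m$, $v_i^0$, $p_i^f$, $p_i^m$ are fixed scalars, while the decision variable is only the quadruple $\phi_i=(\phi_{i,3},\phi_{i,2},\phi_{i,1},\phi_{i,0})$. Under that viewpoint each of the expressions defining $h_i^{(1)},\dots,h_i^{(5)}$ is a linear combination of the coordinates of $\phi_i$ plus a constant, i.e., an affine function on $\Phi_i\subset\mathbb{R}^4$. Since every affine function is simultaneously convex and concave, the proposition follows immediately.

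To make this rigorous, I would first define, for each $r\in\{1,\dots,5\}$, a row vector $\boldsymbol{a}_i^{(r)}\in\mathbb{R}^4$ and a scalar $b_i^{(r)}\in\mathbb{R}$ so that $h_i^{(r)}(\phi_i)=\boldsymbol{a}_i^{(r)}\phi_i+b_i^{(r)}$. For instance, reading off \eqref{eq:con6},
\begin{equation*}
\boldsymbol{a}_i^{(1)}=\bigl((t_i^0)^3,\;(t_i^0)^2,\;t_i^0,\;1\bigr),\qquad b_i^{(1)}=0,
\end{equation*}
and analogously for $r=2,\dots,5$, with the shifts $-p_i^f$, $-v_i^0$, $0$, $-p_i^m$ supplying $b_i^{(r)}$ and the coefficient vectors $\boldsymbol{a}_i^{(r)}$ being determined by the appropriate powers (or derivatives) of $t_i^0$, $t_i^f$, or $t_i^m$. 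No macro beyond standard algebra is needed here, and the identification is immediate by inspection.

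Next I would verify convexity directly from the definition. Fix any $\kappa\in[0,1]$ and any $\phi_i,\phi_i'\in\Phi_i$. Then, for each $r$,
\begin{align*}
h_i^{(r)}\bigl(\kappa\phi_i+(1-\kappa)\phi_i'\bigr)
&=\boldsymbol{a}_i^{(r)}\bigl(\kappa\phi_i+(1-\kappa)\phi_i'\bigr)+b_i^{(r)}\\
&=\kappa\bigl(\boldsymbol{a}_i^{(r)}\phi_i+b_i^{(r)}\bigr)
+(1-\kappa)\bigl(\boldsymbol{a}_i^{(r)}\phi_i'+b_i^{(r)}\bigr)\\
&=\kappa\, h_i^{(r)}(\phi_i)+(1-\kappa)\, h_i^{(r)}(\phi_i'),
\end{align*}
which satisfies the convexity inequality with equality. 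Hence $h_i^{(r)}$ is convex (and, incidentally, concave), completing the proof.

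There is essentially no hard step here: the whole content of the proposition is the observation that, once $t_i^0$, $t_i^f$, $t_i^m$, $v_i^0$, $p_i^f$, $p_i^m$ are treated as data rather than as decision variables, the constraints \eqref{eq:con6} depend linearly on $\phi_i$. The only place where care is required is to be explicit about what the independent variable is; this parallels the argument in Proposition \ref{pro:cost}, where the same affine-in-$\phi_i$ structure of \eqref{eq:upper_p} at fixed time was used. No case analysis on the different $r$ is necessary beyond exhibiting the coefficient vectors $\boldsymbol{a}_i^{(r)}$.
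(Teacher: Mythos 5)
Your proof is correct and follows essentially the same route as the paper's: both arguments observe that, with $t_i^0$, $t_i^f$, $t_i^m$, $v_i^0$, $p_i^f$, $p_i^m$ treated as fixed data, each $h_i^{(r)}$ is affine in $\phi_i$ and therefore satisfies the convexity inequality with equality. Your version is slightly more explicit in exhibiting the coefficient vectors, but the content is identical.
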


\begin{proof}
	If we fix the time in \eqref{eq:con6} and vary $\phi_i= (\phi_{i,3}$, $ \phi_{i,2}, \phi_{i,1}, \phi_{i,0})$, then $h_i^{(r)}(\phi_i),~ r = 1, \dots, 5$, are affine functions. The variables $\phi_i$ take values from a closed subset $\Phi_i$ of $\mathbb{R}^4$, so $\Phi_i$ is a convex set. Similarly, the image of $p_i(\phi_i)$ is a closed subset of $\mathbb{R}$. For any $\kappa \in[0,1]$, for a fixed time $\tau\in [t_{i}^{0}, t_{i}^{f}]$, and for any $\phi_i, \phi_i'\in \Phi_i$, 
	$h_i^{(r)}\big(\delta\phi_i+(1-\kappa)\phi_i'\big) = \kappa h_i^{(r)}(\phi_i) + (1-\kappa) h_i^{(r)}(\phi_i')$, $ r=1,\dots, 5$.
\end{proof}

\paragraph{Inequality Constraints.}
To avoid the speed $v_i(t)$ constraints \eqref{speed} becoming active, for each $i\in\mathcal{N}(t)$, and for all $t\in[t_i^0, t_i^f]$, 
\begin{gather} 
 v_{\min} \le 3\cdot \phi_{i,3} \cdot t^2 +2\cdot \phi_{i,2} \cdot t + \phi_{i,1}  \le v_{\max}. \label{eq:con7}
\end{gather}	
It suffices to check the last equation at its extremum. The first derivative of \eqref{eq:con7} yields the time $\tau_v\in[t_i^0, t_i^f]$ that such extremum exist.

To avoid the control input $u_i(t)$ constraint \eqref{speed_accel constraints} becoming active, for each $i\in\mathcal{N}(t)$, and for all $t\in[t_i^0, t_i^f]$, 
\begin{gather} 
	u_{i,\min} \le 6\cdot \phi_{i,3} \cdot t +2\cdot \phi_{i,2} \le u_{i,\max} \label{eq:con9}.
\end{gather}
From Corollary \ref{cor:active2}, given that  none of the safety constraints \eqref{eq:rearend} and \eqref{eq:lateral} are activated, as discussed next, the extremum of $u(t)$ is at $t_i^0$.
 Hence
\begin{gather} 
	u_{i,\min} \le 6\cdot \phi_{i,3} \cdot t_i^0 +2\cdot \phi_{i,2} \le u_{i,\max} \label{eq:con9a}.
\end{gather}

Next, we impose a condition to avoid the state constraint \eqref{eq:rearend} becoming active within the control zone. This implies that the distance between the path trajectories of CAV $i$ and the preceding CAV $k\in\mathcal{N}(t)$, $\mathcal{C}_{o_i}\cap \mathcal{C}_{o_k} \neq 0$,  on lane $\theta\in\mathcal{L}$ at each $p_i(t)$ should be  greater than $\delta_i(t)$, hence
\begin{gather} 
\xi_i \cdot (p_{k}(t)-p_{i}(t)) > \bar{\delta} + \rho_i \cdot v_i(t), ~t\in[t_i^0, t_i^f]. \label{eq:con10}
\end{gather}
By substituting $p_{k}(t)$, $p_{i}(t)$, and $v_i(t)$, from \eqref{eq:upper_p} and \eqref{eq:upper_v}, we have
\begin{multline}\label{eq:con10a}
    t^3 (\phi_{i,3} - \phi_{k,3}) + t^2 (\phi_{i,2} -\phi_{k,2} + 3 \rho_i \cdot \phi_{i,3}/\xi_i ) \\
    + t (\phi_{i,1} - \phi_{k,1} + 2 \rho_i \cdot \phi_{i,2}/\xi_i) + \rho_i \phi_{i,1}/\xi_i \\
    +\phi_{i,0} -\phi_{k,0} + \bar{\delta}/\xi_i <0.
\end{multline}
It suffices to check the last equation at its extremum. The first derivative of \eqref{eq:con10a} yields the time $\tau_s\in[t_i^0, t_i^f]$ that such extremum exist.

Similarly, the constraint \eqref{eq:lateral} may become active when the path trajectories of $i\in\mathcal{N}(t)$ and a CAV $j\in\mathcal{N}(t)$, $\mathcal{C}_{o_i}\cap \mathcal{C}_{o_j} \neq \emptyset$, cruising on another road, are crossed inside the merging zone which could lead to a lateral collision. Thus, we impose the following condition
\begin{multline}\label{eq:con10b}
    - p_{k,i}(t) + \big[ \phi_{i,3} t^3 + t^2 (\phi_{i,2} + 3 \rho_i \phi_{i,3}/\xi_i ) \\ 
    + t (\phi_{i,1} + 2 \rho_i \phi_{i,2}/\xi_i )\big]+\rho_i \phi_{i,1} /\xi_i +\phi_{i,0} + \bar{\delta} /\xi_i <0,
\end{multline}
where $p_{k,i}(t)$ is the constant distance of CAV $k$ from the entry point that CAV $i$ entered the control zone.
It suffices to check the last equation at its extremum. The first derivative of \eqref{eq:con10b} yields the time $\tau_l\in[t_i^0, t_i^e]$, where $t_i^e$ is the time that CAV $i$ exits the merging zone, that such extremum exist.

Finally, when a CAV $i$ needs to make either a  left or right turn, the speed at the entry of the merging zone needs to be less than or equal to the corresponding allowable speed, $v_{\mathrm{entry}}$, that guarantees comfort for the passengers.
Hence
\begin{gather} 
3\cdot \phi_{i,3} \cdot (t_i^m)^2 +2\cdot \phi_{i,2} \cdot t_i^m+ \phi_{i,1}  \le v_{\mathrm{entry}}. \label{eq:con11a}
\end{gather}

Since we seek to derive the minimum time without activating any of the state, control, and safety constraints, we add a very small $\varepsilon > 0$, in each inequality constraint that will prevent any of these to become active. Without loss of generality, to simplify the exposition, we also consider $\xi_i=1$.
Therefore, the set of inequality constraints in the upper-level optimization is
\begin{align} 
g_i^{(1)}(\phi_i) &= 3\cdot \phi_{i,3} \cdot \tau_v^2 +2\cdot \phi_{i,2} \cdot \tau_v + \phi_{i,1}  - v_{\max} +\varepsilon \le 0, 
\nonumber\\
g_i^{(2)}(\phi_i) &= v_{\min} - 3\cdot \phi_{i,3} \cdot \tau_v^2 - 2\cdot \phi_{i,2} \cdot \tau_v - \phi_{i,1}  +\varepsilon \le 0, 
\nonumber\\
g_i^{(3)}(\phi_i) &= 6\cdot \phi_{i,3} \cdot t_i^0 +2\cdot \phi_{i,2} - u_{i,\max}+\varepsilon \le 0, 
\nonumber\\
g_i^{(4)}(\phi_i) &= u_{i,\min} - 6\cdot \phi_{i,3} \cdot t_i^0 - 2\cdot \phi_{i,2} +\varepsilon \le 0, 
\nonumber\\
g_i^{(5)}(\phi_i) &=\tau_s^3 (\phi_{i,3} - \phi_{k,3}) + \tau_s^2 (\phi_{i,2} -\phi_{k,2} + 3 \rho_i \phi_{i,3}) \nonumber\\
&+ \tau_s (\phi_{i,1} - \phi_{k,1} + 2 \rho_i \cdot \phi_{i,2}) +\rho_i \phi_{i,1} +\phi_{i,0} -\phi_{k,0} \nonumber\\
&+ \bar{\delta} +\varepsilon \le 0,\nonumber\\
g_i^{(6)}(\phi_i) &= - p_{k,i}(t) + \big[ \phi_{i,3} \tau_l^3 + \tau_l^2 (\phi_{i,2} + 3 \rho_i \cdot \phi_{i,3} ) \nonumber\\
&+ \tau_l (\phi_{i,1} + 2 \rho_i \cdot \phi_{i,2})\big] 
+\rho_i \phi_{i,1} +\phi_{i,0} +\bar{\delta} +\varepsilon \le 0,\nonumber\\
g_i^{(7)}(\phi_i) &=  3\cdot \phi_{i,3} \cdot (t_i^m)^2 +2\cdot \phi_{i,2} \cdot t_i^m+ \phi_{i,1}  -v_{\mathrm{entry}} \le 0.  \label{eq:con11}
\end{align}

\begin{proposition}\label{pro:ineq}
	The functions $g_i^{(m)}(\phi_i), m=1,\dots, 7$, are convex.
\end{proposition}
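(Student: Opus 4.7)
The plan is to follow the same strategy as in Proposition \ref{pro:eq}: view each $g_i^{(m)}$ as an affine map of the decision vector $\phi_i=(\phi_{i,3},\phi_{i,2},\phi_{i,1},\phi_{i,0})\in\Phi_i\subset\mathbb{R}^4$, after treating all the evaluation times ($\tau_v$, $\tau_s$, $\tau_l$, $t_i^0$, $t_i^m$) and all coefficients of neighbouring CAVs ($\phi_{k,\cdot}$, $p_{k,i}$, $\bar\delta$, $\varepsilon$, etc.) as fixed parameters. Since affine functions on a convex set are convex, this will settle the claim for every $m=1,\dots,7$ in one stroke.

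First I would note that $\Phi_i$ is a closed subset of $\mathbb{R}^4$ which, as in the proof of Proposition \ref{pro:eq}, we regard as a convex set. Then I would inspect each $g_i^{(m)}$ in \eqref{eq:con11} and observe that with the evaluation time held fixed, the expression is a linear combination of the components of $\phi_i$ plus a constant: $g_i^{(1)},g_i^{(2)}$ and $g_i^{(7)}$ are affine combinations of $\phi_{i,3},\phi_{i,2},\phi_{i,1}$; $g_i^{(3)},g_i^{(4)}$ depend affinely on $\phi_{i,3},\phi_{i,2}$; and $g_i^{(5)},g_i^{(6)}$ depend affinely on all four components of $\phi_i$, with the terms $\phi_{k,3},\phi_{k,2},\phi_{k,1},\phi_{k,0}$ and $p_{k,i}$ entering only as additive constants not involving $\phi_i$.

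The final step would be a direct verification using the convex combination: for any $\kappa\in[0,1]$ and any $\phi_i,\phi_i'\in\Phi_i$, the affine structure yields
\begin{equation*}
g_i^{(m)}\bigl(\kappa\phi_i+(1-\kappa)\phi_i'\bigr)=\kappa\, g_i^{(m)}(\phi_i)+(1-\kappa)\, g_i^{(m)}(\phi_i'),\; m=1,\dots,7,
\end{equation*}
which implies convexity (in fact both convexity and concavity). Hence the inequality $g_i^{(m)}\bigl(\kappa\phi_i+(1-\kappa)\phi_i'\bigr)\le \kappa g_i^{(m)}(\phi_i)+(1-\kappa)g_i^{(m)}(\phi_i')$ required in the definition of convexity holds with equality.

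The only subtle point, and the main potential obstacle, is that the times $\tau_v,\tau_s,\tau_l$ at which the extrema of $v_i^*(t)$, the rear-end gap, and the lateral-collision gap are attained are themselves implicit functions of $\phi_i$ (they are the roots of the first-derivative equations described just before \eqref{eq:con7}, \eqref{eq:con10a}, \eqref{eq:con10b}). To avoid composing an affine function with a nonlinear map, I would adopt the same viewpoint the authors already used in Proposition \ref{pro:eq}, namely to treat each $\tau_{(\cdot)}$ as a fixed parameter of the constraint (the worst-case time at which the constraint is enforced) rather than as an additional decision variable. Under that reading, the argument above is complete and the proposition follows.
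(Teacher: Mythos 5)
Your proof is correct and follows essentially the same route as the paper, whose own proof simply defers to Proposition \ref{pro:eq}: fix the evaluation times, observe that each $g_i^{(m)}$ is affine in $\phi_i$ on the convex set $\Phi_i$, and verify the convex-combination identity. Your explicit handling of the dependence of $\tau_v,\tau_s,\tau_l$ on $\phi_i$ (by treating them as fixed parameters) is more careful than the paper's, but it is the same reading the authors implicitly adopt.
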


\begin{proof}
The proof is similar to the proof of Proposition \ref{pro:eq}.
\end{proof}

\paragraph{Problem Formulation.}
For each CAV $i\in\mathcal{N}(t)$, we consider the following problem

\begin{problem} \label{problem2}
	\begin{align}\label{eq:primal}
	\min_{\phi_i} &~ f_i (\phi_i) \nonumber\\
	\text{subject to}\quad  \phi_i\in\Phi_i, \quad & h_i^{(r)}(\phi_i)=0,~ r=1,\dots, 5,\nonumber\\ & g_i^{(m)}(\phi_i)\le 0, ~m=1,\dots, 7.
	\end{align}
\end{problem}
Note that the set $\Phi_i$ is determined by the occupancy sets of the lanes, i.e.,

	 \begin{align}
\Phi_i=\Big\{\phi_i~|~ f_i(\phi_i)~\notin\bigcup_{\theta\in C_{o_i}} O_{\theta} \Big\},
\label{ptl:optlane}	 	 	 
		\end{align} 
and can be formed by each $i\in\mathcal{N}(t)$ at $t_i^0$ by accessing the crossing protocol $\mathcal{I}(t)$.

The cost function, $f_i (\phi_i)$, of Problem \ref{problem2} is bounded below (Remark \ref{rem:5}). The Lagrangian function, $L_i(\phi_i, \gamma_i, \nu_i): \mathbb{R}^{r+m+1}\to\mathbb{R}$, is

\begin{align}\label{lagrange}
L_i(\phi_i, \gamma_i, \nu_i) = f_i(\phi_i) + \gamma_i^T h_i(\phi_i) + \nu_i^T g_i(\phi_i),
\end{align}
where $h_i(\phi_i)=[h_i^{(1)}(\phi_i) \dots  h_i^{(5)}(\phi_i)]^T$, $g_i(\phi_i)=[g_i^{(1)}(\phi_i) \dots  g_i^{(7)}(\phi_i)]^T$, $\gamma_i=[\gamma_i^{(1)} \dots \gamma_i^{(5)}]^T$, $\gamma_i\in\mathbb{R}^5$, and  $\nu_i=[\nu_i^{(1)} \dots \nu_i^{(7)}]^T$,
$\nu_i\in\mathbb{R}^7_{\ge 0}$.

Next, we investigate some properties of the optimal solution in Problem \ref{problem2} using a geometric duality framework.

\subsection{Geometric Duality Framework}
\label{sec:4c}

A geometric duality framework can admit insightful visualization through the use of hyperplanes along with their set support and separation properties. Before we proceed, and for easy reference, we provide some standard definitions that we use in our exposition.


\begin{definition}\label{def:aff}
	Let $\Lambda$ be a subset of $\mathbb{R}^n$, $n\in\mathbb{N}$. The affine hull of $\Lambda$, denoted \textit{aff}($\Lambda)$, is the intersection of all affine sets containing $\Lambda$.
\end{definition}

\begin{definition}\label{def:ri}
	Let $\Lambda$ be a subset of $\mathbb{R}^n$, $n\in\mathbb{N}$. We say that $z$ is a \textit{relative interior} point of the set $\Lambda$, if $z\in\Lambda$ and there exists an open sphere $R$ centered at $z$ such that $R ~\cap ~$\text{aff}($\Lambda)\subset\Lambda$. The set of all relative interior points of $\Lambda$ is called the relative interior of $\Lambda$, and is denoted by \text{ri}$(\Lambda)$. 
\end{definition}

\begin{definition}\label{def:clos}
	Let $\Lambda$ be a subset of $\mathbb{R}^n$, $n\in\mathbb{N}$. We say that $z$ is a \textit{closure} point of the set $\Lambda$, if there exists a sequence $\{z_k\}\subset\Lambda$ that convergences to $z$. The \textit{closure} of $\Lambda$, denoted \text{cl}($\Lambda$), is the set of all closure points of $\Lambda$.
\end{definition}

Given a nonempty set $\Lambda\subset\mathbb{R}^n, n\in\mathbb{N}$, let $\Lambda_L$ be the set of all limit points of $\Lambda$. The closure of $\Lambda$ is \text{cl}($\Lambda)=\Lambda\cup\Lambda_L$.

\begin{definition}\label{def:rece}
	 Given a nonempty set $\Lambda\subset\mathbb{R}^n, n\in\mathbb{N}$, we say that a vector $z'$ is a \textit{direction of recession} of $\Lambda$ if $z+\kappa z'\in\Lambda$ for all $z\in\Lambda$ and $\kappa\ge 0$.
\end{definition}

Thus, $z'$ is a direction of recession of $\Lambda$ if starting at any $z\in\Lambda$ and going indefinitely along $z'$, we never cross the relative boundary of $\Lambda$ to points outside $\Lambda$. The set of all directions of recession is a cone containing the origin, and it is called the \textit{recession cone} of $\Lambda$. 

The proofs of the following three lemmas can be found in \citet{Bertsekas2003}.

\begin{lemma}  \label{lem:res_con}
	Let $\Lambda$ be a nonempty closed convex set of $\mathbb{R}^{m+r+1}$, $m, r\in\mathbb{N}$. Then the recession cones of $\Lambda$ and \text{ri}$(\Lambda)$ are equal.
\end{lemma}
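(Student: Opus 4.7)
The plan is to prove $R_\Lambda = R_{\text{ri}(\Lambda)}$ by establishing the two inclusions separately, relying on two classical facts for a nonempty closed convex set $\Lambda$: (i) the \emph{line segment principle}, namely if $z_1 \in \text{ri}(\Lambda)$ and $z_2 \in \Lambda$, then every point on the half-open segment $[z_1, z_2)$ belongs to $\text{ri}(\Lambda)$, and (ii) the \emph{closure identity}, $\Lambda = \text{cl}(\text{ri}(\Lambda))$, so that every point of $\Lambda$ is the limit of a sequence in $\text{ri}(\Lambda)$. Both facts are standard for finite-dimensional convex sets and will be invoked without reproof.

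For the inclusion $R_\Lambda \subseteq R_{\text{ri}(\Lambda)}$, I would fix a direction $z' \in R_\Lambda$, an arbitrary point $z \in \text{ri}(\Lambda)$, and a scalar $\kappa \ge 0$, and show $z + \kappa z' \in \text{ri}(\Lambda)$. Since $z \in \Lambda$, the definition of $R_\Lambda$ gives $z + (\kappa+1) z' \in \Lambda$. Writing
\begin{equation*}
 z + \kappa z' = \tfrac{1}{\kappa+1}\, z \;+\; \tfrac{\kappa}{\kappa+1}\, \bigl(z + (\kappa+1) z'\bigr),
\end{equation*}
the point $z + \kappa z'$ lies strictly between $z \in \text{ri}(\Lambda)$ and $z + (\kappa+1)z' \in \Lambda$, so fact (i) places it in $\text{ri}(\Lambda)$. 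Hence $z' \in R_{\text{ri}(\Lambda)}$.

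For the reverse inclusion $R_{\text{ri}(\Lambda)} \subseteq R_\Lambda$, fix $z' \in R_{\text{ri}(\Lambda)}$, an arbitrary $z \in \Lambda$, and $\kappa \ge 0$. By fact (ii) pick a sequence $\{z_n\} \subset \text{ri}(\Lambda)$ with $z_n \to z$. Since $z_n \in \text{ri}(\Lambda)$ and $z' \in R_{\text{ri}(\Lambda)}$, we have $z_n + \kappa z' \in \text{ri}(\Lambda) \subseteq \Lambda$ for every $n$. Passing to the limit and using that $\Lambda$ is closed yields $z + \kappa z' \in \Lambda$, so $z' \in R_\Lambda$.

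The argument is short and largely bookkeeping; the only real obstacle is ensuring the two supporting facts about $\text{ri}(\Lambda)$ are cleanly available in the finite-dimensional setting $\mathbb{R}^{m+r+1}$. If the paper prefers to be self-contained, I would insert a brief reminder before the proof citing \citet{Bertsekas2003} for both the line segment principle and the identity $\Lambda = \text{cl}(\text{ri}(\Lambda))$, since neither has been stated in the excerpt.
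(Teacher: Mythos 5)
Your proof is correct and is essentially the standard argument that the paper itself defers to: the paper offers no proof of this lemma, stating only that it can be found in \citet{Bertsekas2003}, and your two-inclusion argument via the line segment principle and the identity $\Lambda=\text{cl}(\text{ri}(\Lambda))$ is precisely the proof given there. Note that your supporting fact (ii) is already available internally as the paper's Lemma~\ref{lem:cl_ri} combined with the closedness of $\Lambda$, so only the line segment principle genuinely requires the external citation.
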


\begin{lemma}  \label{lem:cl_ri}
	Let $\Lambda$ be a nonempty closed convex set of $\mathbb{R}^{m+r+1}$, $m, r\in\mathbb{N}$. Then \text{cl}($\Lambda$)= \text{cl}(\text{ri}$(\Lambda))$.
\end{lemma}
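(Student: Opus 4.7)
The plan is to establish the two inclusions of the set equality separately. The easy direction $\operatorname{cl}(\operatorname{ri}(\Lambda))\subseteq\operatorname{cl}(\Lambda)$ follows immediately from monotonicity of closure: since $\operatorname{ri}(\Lambda)\subseteq\Lambda$ by Definition \ref{def:ri}, any closure point of $\operatorname{ri}(\Lambda)$ is also a closure point of $\Lambda$. (In fact, since $\Lambda$ is already closed, we have $\operatorname{cl}(\Lambda)=\Lambda$, but this simplification is not needed for the argument.)

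For the reverse inclusion $\operatorname{cl}(\Lambda)\subseteq\operatorname{cl}(\operatorname{ri}(\Lambda))$, I would invoke the so-called line segment principle for convex sets: if $y\in\operatorname{ri}(\Lambda)$ and $x\in\operatorname{cl}(\Lambda)$, then every point of the half-open segment $\{\alpha y+(1-\alpha)x : \alpha\in(0,1]\}$ lies in $\operatorname{ri}(\Lambda)$. Because $\Lambda$ is nonempty and convex, $\operatorname{ri}(\Lambda)$ is nonempty, so one can fix a reference point $y\in\operatorname{ri}(\Lambda)$. Then, given any $x\in\operatorname{cl}(\Lambda)$, define the sequence
\begin{equation}
z_k = \tfrac{1}{k}\, y + \left(1-\tfrac{1}{k}\right) x,\qquad k\in\mathbb{N}.
\end{equation}
By the line segment principle each $z_k\in\operatorname{ri}(\Lambda)$, and $z_k\to x$ as $k\to\infty$. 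By Definition \ref{def:clos} this shows $x\in\operatorname{cl}(\operatorname{ri}(\Lambda))$, completing the inclusion.

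The main technical obstacle is justifying the line segment principle itself. My approach would be: since $y\in\operatorname{ri}(\Lambda)$, by Definition \ref{def:ri} there is an open ball $R$ centered at $y$ with $R\cap\operatorname{aff}(\Lambda)\subseteq\Lambda$. For any $\alpha\in(0,1]$, set $z_\alpha=\alpha y+(1-\alpha)x$. I would construct an open ball around $z_\alpha$ of radius proportional to $\alpha$ and show, using convexity of $\Lambda$ applied to a convex combination of the ball around $y$ and approximations of $x$ drawn from $\Lambda$ (such approximations exist because $x\in\operatorname{cl}(\Lambda)$), that this ball intersected with $\operatorname{aff}(\Lambda)$ sits inside $\Lambda$. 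Hence $z_\alpha\in\operatorname{ri}(\Lambda)$. This geometric ball-scaling argument is the crux; once it is in place, the rest of the proof is a short density argument.
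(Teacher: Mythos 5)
Your argument is correct and is, in substance, the standard textbook proof: the paper itself does not prove this lemma but defers to \citet{Bertsekas2003}, and the proof given there is exactly the one you outline --- the trivial inclusion $\operatorname{cl}(\operatorname{ri}(\Lambda))\subseteq\operatorname{cl}(\Lambda)$ by monotonicity, and the reverse inclusion by fixing $y\in\operatorname{ri}(\Lambda)$ and sliding along the segment from $x\in\operatorname{cl}(\Lambda)$ toward $y$ via the line segment principle. Two ingredients in your write-up are asserted rather than proved, and you should be aware of their status. First, the nonemptiness of $\operatorname{ri}(\Lambda)$ for a nonempty convex set is itself a nontrivial theorem (it requires extracting a maximal affinely independent subset of $\Lambda$ and observing that the relative interior of the simplex it spans is open in $\operatorname{aff}(\Lambda)$); you use it without comment, and without it the reference point $y$ need not exist. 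Second, your sketch of the line segment principle is the right construction --- take the ball $R$ at $y$ with $R\cap\operatorname{aff}(\Lambda)\subseteq\Lambda$, approximate $x$ by points of $\Lambda$, and show that the convex combination $\alpha R+(1-\alpha)x$ traps a ball of radius comparable to $\alpha$ around $z_\alpha$ inside $\Lambda$ relative to $\operatorname{aff}(\Lambda)$ --- but it is a sketch, and the $\varepsilon$-bookkeeping (choosing the approximating point of $\Lambda$ close enough to $x$ relative to the shrunken radius) is where the actual work lives. Neither point is a wrong turn; both are standard and are proved in the cited reference, so your proposal faithfully reconstructs the proof the paper is relying on.
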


\begin{lemma}  \label{lem:hyper}
	Let $\Lambda$ be a nonempty closed convex set of $\mathbb{R}^{m+r+1}$, $m, r\in\mathbb{N}$, that contains no vertical lines. Let $(z, y, w)$ be a vector in $\Lambda$, where $z\in\mathbb{R}^m$, $y\in\mathbb{R}^r$, and $w\in\mathbb{R}$. Then, $\Lambda$ is contained in a closed halfspace corresponding to a nonvertical hyperplane, i.e., there exist a vector $\nu\in\mathbb{R}^m_{\ge 0}$, $\gamma\in\mathbb{R}^r$, $\delta\neq 0$, and a scalar $\eta$ such that
	\begin{align}\label{hyper}
		\nu^T z + \gamma^T y +\delta w \ge \eta, \quad\forall (z,y,w)\in\Lambda.
	\end{align}
Furthermore, if $(z', y', w')\notin\Lambda$, then there exist a nonvertical hyperplane strictly separating $(z', y', w')$ and $\Lambda$.
\end{lemma}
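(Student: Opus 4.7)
The plan is to derive this nonvertical supporting/separating hyperplane statement by combining the classical separating hyperplane theorem for closed convex sets with the recession cone machinery already established in Lemmas \ref{lem:res_con} and \ref{lem:cl_ri}. The key insight is that the ``no vertical lines'' hypothesis is precisely what prevents every supporting hyperplane from being vertical, and this has to be translated into a statement about the coefficient $\delta$ attached to the $w$-component of the normal vector.

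First, I would invoke the classical supporting/strict-separation theorem for closed convex sets. Since $\Lambda$ is a nonempty closed convex subset of $\mathbb{R}^{m+r+1}$ and the halfspace statement is really a statement about some supporting hyperplane of $\Lambda$, one obtains a nonzero triple $(\nu,\gamma,\delta)\in\mathbb{R}^m\times\mathbb{R}^r\times\mathbb{R}$ and a scalar $\eta$ such that $\nu^T z+\gamma^T y+\delta w\ge \eta$ for every $(z,y,w)\in\Lambda$. For the furthermore part, given $(z',y',w')\notin\Lambda$, closedness of $\Lambda$ together with Lemma \ref{lem:cl_ri} provides an open ball around $(z',y',w')$ disjoint from $\Lambda$, and the strict separating hyperplane theorem then yields a triple $(\nu,\gamma,\delta)$ and $\eta$ with $\nu^T z+\gamma^T y+\delta w\ge\eta$ on $\Lambda$ and $\nu^T z'+\gamma^T y'+\delta w'<\eta$.

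Second, I would upgrade the hyperplane to a nonvertical one, i.e.\ arrange $\delta\neq 0$. Suppose for contradiction that $\delta=0$. Then $\nu^T z+\gamma^T y\ge\eta$ for every $(z,y,w)\in\Lambda$, so the inequality is insensitive to translation along the $w$-axis. Picking any $(z,y,w)\in \mathrm{ri}(\Lambda)$, both vertical directions $(0,0,\pm 1)$ would belong to the recession cone of $\mathrm{ri}(\Lambda)$, since moving along them keeps one inside the supporting halfspace and inside $\mathrm{ri}(\Lambda)$ near that point. By Lemma \ref{lem:res_con}, the recession cones of $\Lambda$ and $\mathrm{ri}(\Lambda)$ coincide, so $(0,0,\pm 1)$ are recession directions of $\Lambda$ itself, meaning $\Lambda$ contains a full vertical line through each of its points. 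This contradicts the hypothesis, so the supporting hyperplane can be chosen with $\delta\neq 0$; a continuous perturbation argument (adding a small nonvertical component to a vertical separator and rescaling) preserves the halfspace inequality by closedness of $\Lambda$.

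Third, to secure $\nu\in\mathbb{R}^m_{\ge 0}$, I would appeal to the additional monotonicity structure that $\Lambda$ carries in the Min Common/Max Crossing duality framework in which this lemma is applied: namely, if $(z,y,w)\in\Lambda$ and $z''\ge z$ componentwise then $(z'',y,w)\in\Lambda$. If some $\nu_j<0$, sending the $j$-th coordinate of $z$ to $+\infty$ inside $\Lambda$ would drive $\nu^T z+\gamma^T y+\delta w$ to $-\infty$, contradicting $\ge\eta$. The main obstacle I expect is precisely the nonverticality step: bare separation gives only a nonzero normal, and one has to translate ``no vertical lines in $\Lambda$'' into ``some supporting hyperplane has $\delta\neq 0$'', which is where Lemmas \ref{lem:res_con} and \ref{lem:cl_ri} do the essential work by linking vertical directions in the normal space to vertical lines in $\Lambda$.
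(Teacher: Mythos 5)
The paper does not actually prove this lemma — it defers to \citet{Bertsekas2003} — so the benchmark is the standard proof of the Nonvertical Hyperplane Theorem there. Your overall strategy (separation plus the recession-cone Lemmas \ref{lem:res_con} and \ref{lem:cl_ri} to exploit the no-vertical-lines hypothesis) is the right one, but the central step is broken. You take a \emph{single} closed halfspace containing $\Lambda$, suppose its normal has $\delta=0$, and try to derive a contradiction. No contradiction is available there: a set with no vertical lines can perfectly well be contained in a vertical halfspace (the nonnegative orthant of $\mathbb{R}^{m+r+1}$ contains no vertical lines yet lies in the vertical halfspace where its first coordinate is nonnegative). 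What must be shown is that not \emph{every} closed halfspace containing $\Lambda$ is vertical. The standard argument writes the closed convex set $\Lambda$ as the intersection of all closed halfspaces containing it, notes that a vertical halfspace contains the entire vertical line through each of its points, and concludes that if all containing halfspaces were vertical then $\Lambda$ itself would contain a vertical line through each of its points — the desired contradiction. Moreover, your specific recession-cone claim is false as stated: moving vertically from a point of $\mathrm{ri}(\Lambda)$ while remaining in the containing halfspace does not keep you in $\mathrm{ri}(\Lambda)$ (take $\Lambda=\{(z,y,w):w\ge0\}$ and move downward), so $(0,0,\pm1)$ need not be recession directions of $\mathrm{ri}(\Lambda)$, and Lemma \ref{lem:res_con} cannot be applied the way you apply it.

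Two further issues. For the ``furthermore'' part, the perturbation that repairs a vertical strict separator must add a small positive multiple of the \emph{specific} nonvertical normal $(\nu,\gamma,\delta)$ produced in the first part; adding an arbitrary small nonvertical component and appealing to ``closedness of $\Lambda$'' does not preserve the containing-halfspace inequality when $\Lambda$ is unbounded in the $w$-direction. Finally, your observation about $\nu\in\mathbb{R}^m_{\ge0}$ is correct and worth keeping: the sign condition cannot be extracted from the stated hypotheses (the set $\{(z,y,w):w\ge z_1\}$ has no vertical lines yet admits no nonvertical containing halfspace with $\nu\ge0$), and it genuinely requires the upward-extension property of $\Lambda_E$ in the inequality coordinates. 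That is a legitimate criticism of the lemma as stated, but it also means this part of your proof, like the statement itself, leans on structure that is not among the hypotheses.
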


In our analysis, we consider hyperplanes in the space of constraint-cost pairs $(h_i(\phi_i)^{(r)}, g_i(\phi_i)^{(m)}, f_i(\phi_i))$ of  Problem \ref{problem2} viewed as vectors in $\mathbb{R}^{m+r+1}$, where $m=7,~ r=5$, in our case. A hyperplane $P_H$ of this type is specified by a linear  equation involving a nonzero normal vector $(\nu, \gamma, \delta)$, where $\nu\in\mathbb{R}^m_{\ge 0}$, $\gamma\in\mathbb{R}^r$, $\delta\neq 0$, and a scalar $\eta$
	\begin{multline}
		P_H = \Big\{ (z,y,w)| z\in\mathbb{R}^m, y\in\mathbb{R}^r, w\in\mathbb{R}, \\
		\nu^T z + \gamma^T y +\delta w = \eta \Big\}.
	\end{multline}
A hyperplane with normal $(\nu, \gamma, \delta)$, $\nu\in\mathbb{R}^m_{\ge 0}$, $\gamma\in\mathbb{R}^r$, $\delta\neq 0$, is referred to as nonvertical. By dividing the normal vector of such a hyperplane by $\delta$, we can restrict attention to the case where $\delta=1$.

\begin{proposition}\label{prop:lambda}
	The subset $\Lambda$ of $\mathbb{R}^{m+r+1}$, where $m=7,~ r=5$, given by the space of constraint-cost pairs $(h_i(\phi_i)^{(r)}, g_i(\phi_i)^{(m)}, f_i(\phi_i))$ of Problem \ref{problem2}, i.e.,
		\begin{align}
	\Lambda = \Big\{ h_i(\phi_i)^{(r)}, g_i(\phi_i)^{(m)}, f_i(\phi_i)~ | ~\phi_i\in\mathbb{R}^4 \Big\},
	\end{align}
	is convex.
\end{proposition}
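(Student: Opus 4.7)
The plan is to prove convexity by direct verification: for any two points $z, z' \in \Lambda$ with preimages $\phi_i, \phi_i' \in \mathbb{R}^4$, and any $\kappa \in [0,1]$, I would construct a parameter $\phi_i^{\star} \in \mathbb{R}^4$ whose image under $(h_i, g_i, f_i)$ equals the convex combination $\kappa z + (1-\kappa) z'$. The natural first candidate is $\phi_i^{\kappa} := \kappa \phi_i + (1-\kappa) \phi_i'$, which lies in $\mathbb{R}^4$ since the domain is unrestricted.

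The first technical step is to strengthen what is already known about the constraint functions. Although Propositions \ref{pro:eq} and \ref{pro:ineq} merely claim convexity, an inspection of their proofs together with the explicit expressions \eqref{eq:con6} and \eqref{eq:con11} reveals that each $h_i^{(r)}$ and each $g_i^{(m)}$ is in fact affine in $\phi_i$: with the times $t_i^0$, $t_i^f$, $t_i^m$, $\tau_v$, $\tau_s$, $\tau_l$ held fixed, every constraint is a linear combination of the coordinates $\phi_{i,3}, \phi_{i,2}, \phi_{i,1}, \phi_{i,0}$ plus a constant. Exploiting this affine-ness, the candidate $\phi_i^{\kappa}$ automatically yields the exact identities
\begin{align*}
h_i(\phi_i^{\kappa}) &= \kappa\, h_i(\phi_i) + (1-\kappa)\, h_i(\phi_i'), \\
g_i(\phi_i^{\kappa}) &= \kappa\, g_i(\phi_i) + (1-\kappa)\, g_i(\phi_i'),
\end{align*}
so the first $r+m$ coordinates of $\kappa z + (1-\kappa) z'$ are realized by $\phi_i^{\kappa}$ with no slack.

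The third step addresses the cost coordinate, and this is precisely where I expect the main obstacle. Proposition \ref{pro:cost} supplies only the convex inequality $f_i(\phi_i^{\kappa}) \le \kappa f_i(\phi_i) + (1-\kappa) f_i(\phi_i')$, whereas membership in the image set $\Lambda$ (as literally defined) demands equality. To close this gap, I would examine the fiber $F := \{\phi \in \mathbb{R}^4 : h_i(\phi) = h_i(\phi_i^{\kappa}),~ g_i(\phi) = g_i(\phi_i^{\kappa})\}$, which by affine-ness of $(h_i, g_i)$ is itself an affine subspace of $\mathbb{R}^4$, and search for a $\phi_i^{\star} \in F$ with $f_i(\phi_i^{\star}) = \kappa f_i(\phi_i) + (1-\kappa) f_i(\phi_i')$. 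Because $f_i$ is convex and continuous on the unrestricted domain (Corollary \ref{cor:1} and Proposition \ref{pro:cost}), and because $f_i$ grows unboundedly along any direction in $F$ on which it is non-constant, an intermediate-value argument along $F$ starting from the fiberwise minimizer should produce such a $\phi_i^{\star}$ whenever $F$ has positive dimension.

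The hardest case is when $F$ collapses to the singleton $\{\phi_i^{\kappa}\}$, which occurs precisely when the affine map $(h_i, g_i): \mathbb{R}^4 \to \mathbb{R}^{r+m}$ is injective on the relevant neighborhood. In that event no fiberwise freedom is available, and I would need to fall back on the specific cubic structure of \eqref{eq:upper_p} together with the explicit inverse formula \eqref{eq:upper_inversep}: because $\phi_i$ has only four degrees of freedom while $(h_i, g_i)$ consists of twelve affine expressions, the image of $(h_i, g_i)$ lies on a low-dimensional affine manifold, and I would argue that any convex combination of two image points necessarily admits a preimage realizing the exact cost value through the bijective correspondence in Lemma \ref{lem:2} between $\phi_i$ and the time-trajectory coefficients $\omega_i$. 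Verifying this last point rigorously, and handling degeneracies of the Cardano inversion, is where the bulk of the technical work would concentrate.
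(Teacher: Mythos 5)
There is a genuine gap, and it sits exactly where you predicted it would. You are right that the affine structure of $h_i$ and $g_i$ makes the first $r+m$ coordinates come out exactly under $\phi_i^{\kappa}=\kappa\phi_i+(1-\kappa)\phi_i'$, and right that the cost coordinate only delivers the inequality $f_i(\phi_i^{\kappa})\le \kappa w+(1-\kappa)w'$. But neither of your proposed repairs can close this. The fiber $F$ of $(h_i,g_i)$ is \emph{generically a singleton}, not an exceptional case: you have twelve affine expressions in only four unknowns, and already $h_i^{(1)},h_i^{(3)},h_i^{(4)},h_i^{(5)}$ in \eqref{eq:con6} generically pin down $\phi_i$ uniquely. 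With a singleton fiber, $\Lambda$ is the graph of $f_i$ over a four-dimensional affine subspace of $\mathbb{R}^{12}$, and the graph of a convex function is a convex \emph{set} only when the function is affine; $f_i$, given by the Cardano expression \eqref{eq:upper_inversep}, is not affine. So no preimage $\phi_i^{\star}$ realizing the exact convex combination exists in general, and the fallback to ``the specific cubic structure'' cannot manufacture one.

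For comparison, the paper's own proof is a one-line expansion of $\kappa(\nu^Tz+\gamma^Ty+w)+(1-\kappa)(\nu^Tz'+\gamma^Ty'+w')$ followed by an appeal to Propositions \ref{pro:cost}, \ref{pro:eq}, and \ref{pro:ineq}; it never confronts the equality-versus-inequality issue you isolated, so your attempt is in that sense the more honest one. The object that is genuinely convex, and the one the rest of the development actually relies on (Lemma \ref{lem:vertical} and the existence theorem are stated for $\Lambda_E$ and for closure points), is the upward extension $\Lambda_E$ of Corollary \ref{cor:lambda_e}: there the clause ``$\exists\, w'\le w$'' absorbs precisely the slack $f_i(\phi_i^{\kappa})\le\kappa w+(1-\kappa)w'$, and convexity follows immediately from affineness of $h_i,g_i$ and convexity of $f_i$ with the witness $\phi_i^{\kappa}$ you already constructed. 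The productive way to finish your argument is therefore not to hunt for an exact preimage, but to restate the proposition for $\Lambda_E$ (or for the analogous epigraph-type set in the standard geometric duality framework) and run your first two steps verbatim.
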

\begin{proof}
	Let $(z, y, w)$ and $(z', y', w')$ be two elements in $\Lambda$. For any $\kappa\in[0,1]$, $\kappa (\nu^T z + \gamma^T y + w ) + (1-\kappa) (\nu^T z' + \gamma^T y' + w')  = \kappa \nu^T z + (1-\kappa) \nu^T z' +	
	 \kappa \gamma^T y + (1-\kappa) \gamma^T y' + \kappa w +(1-\kappa) w'$. Since $\Lambda$ is defined in the space of constraint-cost pairs $(h_i(\phi_i)^{(r)}, g_i(\phi_i)^{(m)}, f_i(\phi_i))$, which are convex (by Propositions \ref{pro:cost}, \ref{pro:eq}, and \ref{pro:ineq}), the result follows.
\end{proof}

\begin{corollary}\label{cor:lambda_e}
	The set
	\begin{align}
	\Lambda_E = \Big\{ (z,y,w) ~| ~\exists ~w'\le w~ \text{and}~ (z,y,w')\in \Lambda	
	\Big\},
	\end{align}
	is convex.
\end{corollary}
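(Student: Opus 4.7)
The plan is to reduce convexity of $\Lambda_E$ directly to the convexity of $\Lambda$, which is already established in Proposition \ref{prop:lambda}. The set $\Lambda_E$ is essentially the "downward extension" of $\Lambda$ in the last coordinate (analogous to the epigraph construction), so convexity should pass through in a routine way.

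Concretely, I would take two arbitrary points $(z_1,y_1,w_1),(z_2,y_2,w_2)\in\Lambda_E$ and a scalar $\kappa\in[0,1]$. By the defining property of $\Lambda_E$, there exist $w_1'\le w_1$ and $w_2'\le w_2$ such that $(z_1,y_1,w_1')$ and $(z_2,y_2,w_2')$ both belong to $\Lambda$. By Proposition \ref{prop:lambda}, $\Lambda$ is convex, hence
\begin{equation*}
\bigl(\kappa z_1+(1-\kappa)z_2,\; \kappa y_1+(1-\kappa)y_2,\; \kappa w_1'+(1-\kappa)w_2'\bigr)\in\Lambda.
\end{equation*}
Setting $w':=\kappa w_1'+(1-\kappa)w_2'$ and $w:=\kappa w_1+(1-\kappa)w_2$, the monotonicity of convex combinations with nonnegative coefficients gives $w'\le w$. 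Thus $w'$ is a valid witness showing that $(\kappa z_1+(1-\kappa)z_2,\kappa y_1+(1-\kappa)y_2,w)\in\Lambda_E$, which is exactly the convex combination of the two chosen points.

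There is no real obstacle here; the only thing to verify carefully is the choice of witness $w'$ for the combined point, and that this witness indeed satisfies $w'\le w$, which is immediate from $\kappa,1-\kappa\ge 0$ together with $w_i'\le w_i$ for $i=1,2$. The argument does not require any additional structure beyond the convexity of $\Lambda$, so the corollary follows essentially for free from Proposition \ref{prop:lambda}.
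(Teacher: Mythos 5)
Your proof is correct and is precisely the routine verification the paper leaves implicit: the authors state this as a corollary of Proposition \ref{prop:lambda} with no written proof, and your argument (witnesses $w_1'\le w_1$, $w_2'\le w_2$ in $\Lambda$, convexity of $\Lambda$, and $\kappa w_1'+(1-\kappa)w_2'\le \kappa w_1+(1-\kappa)w_2$ since $\kappa,1-\kappa\ge 0$) is exactly the intended one. One cosmetic remark: $\Lambda_E$ is the \emph{upward} extension of $\Lambda$ in the last coordinate (the set of points lying on or above $\Lambda$), not the downward one, though this does not affect the validity of your argument.
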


\begin{lemma}  \label{lem:vertical}
	If for every sequence $\{(z_k, y_k, w_k)\subset \Lambda\}$ with $(z_k, y_k)\to (0,0)$, we have  $w^*\le \liminf_{k\to\infty} w_k$, where $w^*=\inf_{(0, 0, w)\in\Lambda} w$, then the set $\Lambda_E$ does not contain any vertical lines.
\end{lemma}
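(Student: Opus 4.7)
The plan is to argue by contradiction: assume $\Lambda_E$ contains a vertical line and manufacture a sequence in $\Lambda$ that violates the hypothesis. Unpacking the definition, a vertical line in $\Lambda_E$ means that there exists $(z_0,y_0)\in\mathbb{R}^m\times\mathbb{R}^r$ such that $(z_0,y_0,w)\in\Lambda_E$ for every $w\in\mathbb{R}$; in particular, $\Lambda_E$ contains points over $(z_0,y_0)$ with arbitrarily negative $w$-coordinate. My goal is to leverage this vertical line, together with a reference point lying above $(0,0)$, to produce a sequence $\{(\tilde z_k,\tilde y_k,\tilde w_k)\}\subset\Lambda$ with $(\tilde z_k,\tilde y_k)\to(0,0)$ while $\liminf_k \tilde w_k<w^*$.

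The first step is to anchor a reference point over the origin of the $(z,y)$-space. Taking any $\bar w>w^*$ (treating $w^*$ as finite; see the remark below), the definition of $w^*$ furnishes a $w'\le \bar w$ with $(0,0,w')\in\Lambda$, and hence $(0,0,\bar w)\in\Lambda_E$. The second step is the main construction: using the convexity of $\Lambda_E$ established in Corollary \ref{cor:lambda_e}, I form, for each integer $k\ge 1$, the convex combination
\begin{equation*}
\bigl(1-\tfrac{1}{k}\bigr)(0,0,\bar w)+\tfrac{1}{k}(z_0,y_0,-k^2)=\bigl(\tfrac{z_0}{k},\tfrac{y_0}{k},\bigl(1-\tfrac{1}{k}\bigr)\bar w-k\bigr)\in\Lambda_E.
\end{equation*}
The $(z,y)$-part collapses to $(0,0)$ while the $w$-part diverges to $-\infty$, exactly the behavior needed.

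The third step descends from $\Lambda_E$ back into $\Lambda$: by the very definition of $\Lambda_E$, each such point admits some $\tilde w_k\le\bigl(1-\tfrac{1}{k}\bigr)\bar w-k$ with $\bigl(\tfrac{z_0}{k},\tfrac{y_0}{k},\tilde w_k\bigr)\in\Lambda$. Then $\tilde w_k\to-\infty$, so $\liminf_k \tilde w_k=-\infty<w^*$, contradicting the hypothesis. Hence $\Lambda_E$ contains no vertical lines.

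The main obstacle I foresee is interpretive rather than technical: the degenerate case $w^*=-\infty$, where the hypothesis is vacuous and the origin itself already supports a vertical line in $\Lambda_E$, so the conclusion can only be meaningful under $w^*>-\infty$. I would flag at the top of the proof that $w^*$ is taken to be finite (as is standard in the Bertsekas-style geometric duality setup) and then carry the scaling-convexity argument through as above; the convexity of $\Lambda_E$ from Corollary \ref{cor:lambda_e} does all the heavy lifting, so no further technicalities arise.
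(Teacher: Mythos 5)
Your proof is correct, and it reaches the contradiction by a genuinely different route than the paper. The paper also argues by contradiction, but it runs the argument through the recession-cone machinery: a vertical line makes $(0,0,-1)$ a direction of recession of $\text{cl}(\Lambda_E)$, hence of $\text{ri}(\Lambda_E)$ by Lemma~\ref{lem:res_con}; it then takes a sequence in $\text{ri}(\Lambda_E)$ converging to the closure point $(0,0,w^*)$ (using Lemma~\ref{lem:cl_ri}), shifts each term down by $1$ along the recession direction, and descends to a sequence in $\Lambda$ with $\liminf_{k\to\infty} w_k' \le w^*-1$. You instead bypass Lemmas~\ref{lem:res_con} and~\ref{lem:cl_ri} entirely and use only the convexity of $\Lambda_E$ from Corollary~\ref{cor:lambda_e}: anchoring a point $(0,0,\bar w)\in\Lambda_E$ over the origin and taking convex combinations with points $(z_0,y_0,-k^2)$ on the vertical line drives the $(z,y)$-component to $(0,0)$ while the $w$-component diverges to $-\infty$, which is an even stronger violation of the hypothesis. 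Your construction is more elementary and self-contained, at the price of needing the explicit anchoring step over the origin (which requires $w^*<+\infty$, i.e., that $\Lambda$ has points over $(0,0)$ — the same implicit assumption the paper makes when it asserts $(0,0,w^*)$ is a closure point of $\Lambda_E$). Your flag that $w^*$ must be taken finite is apt: when $w^*=-\infty$ the hypothesis is vacuously satisfied while $\Lambda_E$ does contain a vertical line over the origin, so both your proof and the paper's tacitly assume $w^*>-\infty$; you are the more explicit about it.
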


\begin{proof}
	Suppose that $\Lambda_E$ contains a vertical line. Then, since $\Lambda_E$ is convex, the direction $(0, 0, -1)$ would be a direction of recession of \text{cl}($\Lambda_E$), and hence from Lemma \ref{lem:res_con}, a direction of recession of \text{ri}($\Lambda_E$). Since $(0, 0, w^*)$ is a closure point of $\Lambda_E$, it is also a closure point of \text{ri}($\Lambda_E$) (Lemma \ref{lem:cl_ri}), and therefore, there exists a sequence $\{(z_k, y_k, w_k)\subset$ \text{ri}($\Lambda_E$)\} converging to $(0, 0, w^*)$. Since $(0, 0, -1)$  is a direction of recession of \text{ri}($\Lambda_E$), $\{(z_k, y_k, w_k-1)\subset$ \text{ri}($\Lambda_E$)\}, and consequently, $\{(z_k, y_k, w_k-1)\subset$ $\Lambda_E$\}. Hence, in view of the definition of $\Lambda_E$, there is a sequence $\{(z_k, y_k, w_k')\in \Lambda\}$, with $w_k'\le w_k-1$, for all $k$, such that
	$\liminf_{k\to\infty} w_k'\le w^*-1$.
	However, this contradicts that $w^*\le \liminf_{k\to\infty} w_k$ for every sequence $\{(z_k, y_k, w_k)\in \Lambda\}$ with $(z_k, y_k)\to (0,0)$. 
\end{proof}

\begin{remark}\label{rem:hyper1}
	The hyperplane in $P_H$ with normal $(\nu, \gamma, 1)$ that passes through a vector $(h_i(\phi_i)^{(r)}, g_i(\phi_i)^{(m)}, f_i(\phi_i))$ in $\Lambda$ intercepts the vertical axis $\big\{(0, 0, w)~|~ w\in\mathbb{R}\}$ at the level of the Lagrangian function, $L_i(\phi_i, \gamma_i, \nu_i)$, in \eqref{lagrange}. 
\end{remark}

\begin{remark}\label{rem:hyper2}
	A hyperplane in $P_H$ with normal $(\nu, \gamma, 1)$  crosses the $(m+r+1)-$st axis at $(0, 0, \eta)$, $\eta\ge 0$. Furthermore, it contains the set $\Lambda$ in its upper closed halfplane if and only if, for all $(z, y, w)\in\Lambda$, 
		\begin{align}
			\nu^T z + \gamma^T y + w \ge \eta.
		\end{align}
\end{remark}

\begin{remark}\label{rem:hyper3}
	Among all hyperplanes in $P_H$ with a normal $(\nu, \gamma, 1)$ that contain in their positive, closed halfspace set $\Lambda$, the highest attained level of interception of the vertical axis is
	\begin{align}
		\inf_{\phi_i\in\Phi_i} L_i(\phi_i, \gamma_i, \nu_i).
	\end{align}
\end{remark}

\begin{theorem}\label{theo:main}
	There is no duality gap in Problem \ref{problem2}.
\end{theorem}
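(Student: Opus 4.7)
The plan is to apply the geometric duality framework of Section~\ref{sec:4c} to establish strong duality between Problem~\ref{problem2} and its Lagrangian dual. Let $w^* = \inf\{f_i(\phi_i) : \phi_i \in \Phi_i,\, h_i(\phi_i)=0,\, g_i(\phi_i)\le 0\}$ denote the primal optimal value, and let $q^* = \sup_{\nu_i \in \mathbb{R}^7_{\ge 0},\, \gamma_i \in \mathbb{R}^5} \inf_{\phi_i \in \Phi_i} L_i(\phi_i, \gamma_i, \nu_i)$ denote the dual optimal value. Weak duality gives $q^* \le w^*$ immediately, so the task reduces to showing $q^* \ge w^*$.

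First, I would observe from the definition of $\Lambda$ that $w^* = \inf\{w : (0,0,w)\in\Lambda\}$, and the same infimum is attained on the enlarged set $\Lambda_E$ of Corollary~\ref{cor:lambda_e}. Next, I would verify the hypothesis of Lemma~\ref{lem:vertical}: for every sequence $\{(z_k,y_k,w_k)\} \subset \Lambda$ with $(z_k,y_k)\to(0,0)$, one has $w^* \le \liminf_{k\to\infty} w_k$. This follows from the continuity of $f_i,h_i,g_i$ in $\phi_i$ together with the definition of $w^*$: any generating sequence $\{\phi_{i,k}\}$ asymptotically satisfies the constraints, so any limit point is primal feasible and $\liminf w_k \ge w^*$. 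By Lemma~\ref{lem:vertical}, $\Lambda_E$ then contains no vertical lines.

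Second, since $\Lambda_E$ is convex (Corollary~\ref{cor:lambda_e}) and contains no vertical lines, I would apply Lemma~\ref{lem:hyper} to $\mathrm{cl}(\Lambda_E)$. For any $\varepsilon>0$, the definition of $w^*$ places $(0,0,w^*-\varepsilon)$ outside $\mathrm{cl}(\Lambda_E)$, and Lemma~\ref{lem:hyper} produces a nonvertical hyperplane with normal $(\nu_i,\gamma_i,1)$, $\nu_i\in\mathbb{R}^7_{\ge 0}$, that strictly separates this point from $\Lambda_E$. By Remarks~\ref{rem:hyper1}--\ref{rem:hyper3}, the vertical-axis intercept of this hyperplane equals $\inf_{\phi_i\in\Phi_i} L_i(\phi_i,\gamma_i,\nu_i)$, which must therefore lie at or above $w^*-\varepsilon$. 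Taking the supremum over $(\nu_i,\gamma_i)$ yields $q^* \ge w^* - \varepsilon$; letting $\varepsilon\downarrow 0$ gives $q^*\ge w^*$, and combined with weak duality this yields $q^* = w^*$, i.e., no duality gap.

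The main obstacle is the rigorous verification of the liminf hypothesis of Lemma~\ref{lem:vertical}, which is essentially a closedness property of the constraint-cost image $\Lambda$ along the ``feasibility axis.'' This is delicate if $\Phi_i$ fails to be compact, since a sequence $\{\phi_{i,k}\}$ could in principle escape to infinity while $(h_i(\phi_{i,k}), g_i(\phi_{i,k}))\to(0,0)$; ruling this out requires leveraging the boundary conditions (which pin down $t_i^0, t_i^f, v_i^0, p_i^f$ and thereby restrict the admissible $\phi_i$) together with the recession-cone structure established in Lemma~\ref{lem:res_con} and Lemma~\ref{lem:cl_ri}. Once this closedness step is secured, the remainder of the argument is a direct translation of the hyperplane-separation machinery already assembled in the paper.
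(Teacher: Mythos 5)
Your argument takes a genuinely different route from the paper's. The paper disposes of this theorem in one line: since $f_i$ is convex and the constraints $h_i^{(r)}$ and $g_i^{(m)}$ are affine in $\phi_i$, the refined (weaker) Slater condition applies and strong duality follows immediately. You instead run a full min-common/max-crossing argument through the hyperplane machinery of Section~\ref{sec:4c} (Lemmas~\ref{lem:vertical} and~\ref{lem:hyper}, Remarks~\ref{rem:hyper1}--\ref{rem:hyper3}), which the paper reserves for the subsequent existence theorem rather than for the duality-gap statement.

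The difficulty is that your route has a genuine gap exactly where you flag ``the main obstacle.'' Everything hinges on verifying the hypothesis of Lemma~\ref{lem:vertical}, namely that $w^*\le\liminf_{k\to\infty} w_k$ for every sequence $\{(z_k,y_k,w_k)\}\subset\Lambda$ with $(z_k,y_k)\to(0,0)$. You assert this ``follows from continuity,'' but your own justification breaks down when the generating sequence $\{\phi_{i,k}\}$ is unbounded --- there is then no limit point to which feasibility and continuity can be applied --- and you never close that loophole; you only gesture at ``leveraging the boundary conditions'' and the recession-cone lemmas without carrying the step out. This is not a cosmetic omission: in the min-common/max-crossing framework that liminf condition is essentially equivalent to the absence of a duality gap (given convexity of $\Lambda_E$), and it is precisely the condition the paper's next theorem isolates as necessary and sufficient for the \emph{existence} of an optimal solution of Problem~\ref{problem2}. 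So, as written, your proof presupposes a condition at least as strong as the conclusion, whereas the paper's Slater-based argument needs only convexity of $f_i$, affineness of the constraints, and primal feasibility. To repair your proof you would have to actually establish the liminf condition from the structure of \eqref{eq:con6} and \eqref{eq:con11} (for instance, by showing that the affine equality constraints force boundedness of the relevant $\phi_{i,k}$), or else abandon the geometric route and invoke Slater's condition directly, as the paper does.
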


\begin{proof}
Since $f_i(\cdot), h_i^{(r)}(\cdot), g_i^{(m)}(\cdot); r = 1,\ldots, 5; m = 1, \ldots, 7$, are convex functions, and $h_i^{(r)}(\cdot), g_i^{(m)}(\cdot)$ are affine, the result immediately follows by applying the Slater's weaker condition; see \citet{Slater1950}.
\end{proof}

\begin{corollary}\label{cor:q_w}
	If there is no duality gap in Problem \ref{problem2}, then
		\begin{align}
	        \max  \inf_{(z, y)\in\Lambda} \{\nu^T z + \gamma^T y + w\}= w^*,
	    \end{align}
	 where $w^*=\inf_{(0, 0, w)\in\Lambda} w$.
\end{corollary}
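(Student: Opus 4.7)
The plan is to unpack the corollary as a translation between the geometric picture of hyperplanes and the Lagrangian duality of Problem \ref{problem2}, and then apply Theorem \ref{theo:main} directly. First, I would identify the inner infimum with the Lagrangian dual function. By Remark \ref{rem:hyper1}, for any point $(z,y,w) \in \Lambda$ of the form $(h_i(\phi_i), g_i(\phi_i), f_i(\phi_i))$, the expression $\nu^T z + \gamma^T y + w$ equals the Lagrangian $L_i(\phi_i, \gamma_i, \nu_i)$ defined in \eqref{lagrange}. Taking the infimum over $(z,y,w) \in \Lambda$ is therefore equivalent to taking the infimum of $L_i(\phi_i, \gamma_i, \nu_i)$ over $\phi_i \in \Phi_i$, which by Remark \ref{rem:hyper3} is precisely the dual function $q_i(\gamma_i, \nu_i) := \inf_{\phi_i \in \Phi_i} L_i(\phi_i, \gamma_i, \nu_i)$.

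Second, I would interpret the outer maximization over the nonvertical hyperplane normals $(\nu, \gamma, 1)$ (with $\nu \ge 0$, which is the restriction identified after Lemma \ref{lem:hyper}) as the dual optimal value, $q_i^\star := \sup_{\nu_i \ge 0,\, \gamma_i} q_i(\gamma_i, \nu_i)$. This identifies the left-hand side of the claim with the Lagrangian dual optimum of Problem \ref{problem2}.

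Third, I would recall that, by Theorem \ref{theo:main}, Problem \ref{problem2} has no duality gap. By the very definition of ``no duality gap,'' the dual optimum $q_i^\star$ equals the primal optimum. In the geometric language of this section, the primal optimum is the min-common-point value along the vertical axis, namely $w^* = \inf_{(0,0,w) \in \Lambda} w$, which is well-defined by virtue of Proposition \ref{prop:lambda} (convexity of $\Lambda$), Corollary \ref{cor:lambda_e} (convexity of $\Lambda_E$), and Lemma \ref{lem:vertical} (absence of vertical lines). Combining these, $\sup \inf_{(z,y,w) \in \Lambda} \{\nu^T z + \gamma^T y + w\} = q_i^\star = w^*$, which also justifies replacing $\sup$ by $\max$, since strong duality guarantees the supremum is attained.

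The main obstacle I would anticipate is not the Lagrangian/dual translation itself, which is essentially bookkeeping powered by Remarks \ref{rem:hyper1}--\ref{rem:hyper3}, but rather the careful matching between the geometric primal value $\inf_{(0,0,w) \in \Lambda} w$ and the analytic primal optimum of Problem \ref{problem2}. This step is what relies on the hypotheses already established in Lemmas \ref{lem:res_con}--\ref{lem:hyper} (to ensure that a supporting nonvertical hyperplane through $(0,0,w^*)$ exists) and on the convexity structure of the constraint-cost set $\Lambda$. Once this identification is secured, the remainder is a direct rewriting of Theorem \ref{theo:main}'s conclusion in the geometric notation of the corollary.
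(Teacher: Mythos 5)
Your proposal is correct and matches what the paper intends: the corollary is stated without proof precisely because, once Remarks \ref{rem:hyper1}--\ref{rem:hyper3} identify $\inf_{(z,y,w)\in\Lambda}\{\nu^{T}z+\gamma^{T}y+w\}$ with the dual function $\inf_{\phi_i\in\Phi_i}L_i(\phi_i,\gamma_i,\nu_i)$ and $w^{*}$ with the primal optimal value, the stated equality is just the definition of zero duality gap in the min-common/max-crossing geometry. The one point to be careful about is that writing $\max$ rather than $\sup$ is not a consequence of the absence of a duality gap alone but of the Slater condition used to establish it, which additionally guarantees that a dual optimal solution (a supporting nonvertical hyperplane through $(0,0,w^{*})$) is attained.
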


Next, we provide the condition under which an optimal solution in Problem \ref{problem2} exists.


\begin{theorem}\label{theo:main}
    An optimal solution of Problem \ref{problem2} exists if and only if for every sequence $\{(z_k, y_k, w_k)\subset \Lambda\}$ with $(z_k, y_k)\to (0,0)$, there holds  $w^*\le \liminf_{k\to\infty} w_k$, where $w^*=\inf_{(0, 0, w)\in\Lambda} w$.
\end{theorem}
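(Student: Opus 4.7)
The plan is to prove the two implications separately, using the no-duality-gap result together with the geometric structure of $\Lambda_E$ developed in Lemmas \ref{lem:res_con}--\ref{lem:vertical}.

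For the necessity direction, I would assume an optimal $\phi_i^* \in \Phi_i$ exists, so that $w^* = f_i(\phi_i^*)$ is finite and attained. Since Slater's condition yields no duality gap (previous Theorem \ref{theo:main}), there exist optimal dual multipliers $(\gamma_i^*, \nu_i^*)$ with $\nu_i^* \ge 0$ satisfying $w^* = \inf_{\phi_i} L_i(\phi_i, \gamma_i^*, \nu_i^*)$. For any sequence $\{(z_k, y_k, w_k)\} \subset \Lambda$ with $(z_k, y_k) \to (0,0)$, one can write $z_k = h_i(\phi_i^k)$, $y_k = g_i(\phi_i^k)$, $w_k = f_i(\phi_i^k)$ for some $\phi_i^k$. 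From the Lagrangian identity
\begin{equation*}
w_k = L_i(\phi_i^k, \gamma_i^*, \nu_i^*) - (\gamma_i^*)^T z_k - (\nu_i^*)^T y_k \ge w^* - (\gamma_i^*)^T z_k - (\nu_i^*)^T y_k,
\end{equation*}
and then taking $\liminf$ together with $(z_k, y_k) \to (0,0)$, the desired inequality $\liminf_{k\to\infty} w_k \ge w^*$ follows immediately.

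For the sufficiency direction, I would assume the lim inf condition and invoke Lemma \ref{lem:vertical} to conclude that $\Lambda_E$ contains no vertical lines. Since $\Lambda_E$ is convex (Corollary \ref{cor:lambda_e}), Lemmas \ref{lem:res_con} and \ref{lem:cl_ri} propagate the absence of vertical lines to $\mathrm{cl}(\Lambda_E)$. By definition of $w^*$ as an infimum, the point $(0,0,w^*)$ lies in $\mathrm{cl}(\Lambda_E)$, being the limit of a minimizing sequence in $\Lambda$. The lim inf hypothesis prevents this closure point from lying strictly below the fiber of $\Lambda_E$ above $(0,0)$, so $(0,0,w^*) \in \Lambda_E$. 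Unpacking the definition of $\Lambda_E$, this yields some $\phi_i^* \in \mathbb{R}^4$ with $h_i(\phi_i^*)=0$, $g_i(\phi_i^*) \le 0$, and $f_i(\phi_i^*) = w^*$, which is the desired optimal solution of Problem \ref{problem2}.

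The main obstacle is the sufficiency step, namely promoting the closure-point statement $(0,0,w^*) \in \mathrm{cl}(\Lambda_E)$ to the attainment statement $(0,0,w^*) \in \Lambda_E$. The absence of vertical lines supplied by Lemma \ref{lem:vertical} is exactly the tool that rules out the pathology of $w^*$ being approached along the vertical fiber above $(0,0)$ without ever being realized; this is the crucial bridge between the analytic hypothesis on $\liminf_{k\to\infty} w_k$ and the geometric statement of primal attainment.
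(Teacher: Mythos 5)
Your necessity direction (an optimal solution implies the $\liminf$ condition) is sound and is essentially the paper's own first paragraph: both arguments bound $w_k$ from below by $\inf_{(z,y,w)\in\Lambda}\{\nu^T z+\gamma^T y+w\}$ plus terms that vanish as $(z_k,y_k)\to(0,0)$, and then invoke the absence of a duality gap to identify that infimum (maximized over multipliers) with $w^*$.

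The sufficiency direction, however, has a genuine gap at the final step, where you pass from $(0,0,w^*)\in\mathrm{cl}(\Lambda_E)$ to $(0,0,w^*)\in\Lambda_E$. The $\liminf$ hypothesis only yields that $(0,0,w^*-\varepsilon)\notin\mathrm{cl}(\Lambda_E)$ for every $\varepsilon>0$; it says nothing about whether the boundary point $(0,0,w^*)$ itself belongs to $\Lambda_E$, and $\Lambda_E$ is not closed in general. A convex $\Lambda_E$ whose fiber over the origin is the open ray $\{(0,0,w)\,|\,w>w^*\}$ satisfies the $\liminf$ condition yet defeats your attainment claim, so the absence of vertical lines (Lemma \ref{lem:vertical}) cannot serve as the bridge you describe: it rules out the fiber being an entire line, not the infimum over the fiber being unattained. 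The paper uses the same ingredients differently: having shown $(0,0,w^*-\varepsilon)\notin\mathrm{cl}(\Lambda_E)$ and that $\Lambda_E$ contains no vertical lines, it applies Lemma \ref{lem:hyper} to strictly separate $(0,0,w^*-\varepsilon)$ from $\Lambda_E$ by a \emph{nonvertical} hyperplane, reads off its vertical-axis intercept $\eta$ with $w^*-\varepsilon\le\eta\le w^*$, bounds $\eta$ above by $\max\inf_{(z,y)\in\Lambda}\{\nu^T z+\gamma^T y+w\}$, and lets $\varepsilon\to 0$ to conclude $\max\inf\{\cdot\}=w^*$. In other words, what is ``attained'' in this direction is the separating hyperplane (a multiplier pair achieving the primal value), not the primal infimum itself; if you insist on reading the theorem's conclusion as primal attainment, neither your argument nor the stated hypothesis delivers it, and you should instead reproduce the paper's hyperplane construction.
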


\begin{proof}
    We show sufficiency first. Let $\{(z_k, y_k, w_k)\subset \Lambda\}$ such that $(z_k, y_k)\to (0,0)$. Taking the limit as $k\to\infty$, we have
	$\inf_{(z, y)\in\Lambda} \{\nu^T z + \gamma^T y + w\} \le \liminf_{k\to\infty} w_k$,
	which implies that
	$\max  \inf_{(z, y)\in\Lambda} \{\nu^T z + \gamma^T y + w\}= w^*\le \liminf_{k\to\infty} w_k$.
    
    To show necessity, first we note that $(0, 0, w^*)$ is a closure point of $\Lambda_E$, since by the definition of $w^*$, there exist a sequence $\{0, 0, w_k\}$ that belongs to $\Lambda$, and hence also to $\Lambda_E$, and is such that $w_k\to w^*$. Next, we show by contradiction that $(0, 0, w^*-\varepsilon)\notin~ $\text{cl}($\Lambda_E$), for any $\varepsilon>0$. Suppose that $(0, 0, w^*-\varepsilon)\in~$\text{cl}($\Lambda_E$) for some $\varepsilon>0$. Hence, there exists a sequence $\{(z_k, y_k, w_k)\subset \Lambda_E\}$ such that  $(z_k, y_k, w_k)\to (0, 0, w^*-\varepsilon)$. In view of the definition of $\Lambda_E$, this implies the existence of another sequence $\{(z_k, y_k, w_k')\subset \Lambda\}$ with $(z_k, y_k)\to (0,0)$ and $w_k'\le w_k$ for all $k$, such that 
	$\liminf_{k\to\infty} w_k'\le w^*-\varepsilon$,
	which contradicts the hypothesis $w^*\le \liminf_{k\to\infty} w_k$. Since $\Lambda_E$ does not contain any vertical lines (Lemma \ref{lem:vertical}) and $(0, 0, w^*-\varepsilon)\notin~ $\text{cl}($\Lambda_E$) for any $\varepsilon>0$, it follows (Lemma \ref{lem:hyper}) that there exists a nonvertical hyperplane in $P_H$ that separates strictly $(0, 0, w^*-\varepsilon)$ and $\Lambda_E$. This hyperplane crosses the $(m+r+1)-$st axis at a unique vector $(0,0, \eta)$, which must lie between $(0, 0, w^*-\varepsilon)$ and $(0, 0, w^*)$, i.e., $w^*-\varepsilon\le\eta\le w^*$. Furthermore, $\eta$ cannot exceed the value 
	$\max \inf_{(z, y)\in\Lambda} \{\nu^T z + \gamma^T y + w\}$,
	for all $(z, y, w)\in\Lambda$, which implies 
	$w^*-\varepsilon\le \max  \inf_{(z, y)\in\Lambda} \{\nu^T z + \gamma^T y + w\}\le w^*$.
	Since $\varepsilon$ can be arbitrarily small, it follows that 
	$\max  \inf_{(z, y)\in\Lambda} \{\nu^T z + \gamma^T y + w\}= w^*$.
\end{proof}

\begin{corollary}\label{cor:framework}
	The solution of Problem \ref{problem2} yields the optimal $\phi_i= (\phi_{i,3}, \phi_{i,2}, \phi_{i,1}, \phi_{i,0})\in\Phi_i$ that minimizes \eqref{eq:upper_inversep} evaluated at $p_i^f$, and hence $t_i^f$. Recall that the constants $\omega_{i,3}$, $\omega_{i,2}$, $\omega_{i,1}$, $\omega_{i,0}\in\Omega_i$ in \eqref{eq:upper_inversep} are derived directly from $\phi_i$ (Lemma \ref{lem:2}). However, the inverse of \eqref{eq:upper_inversep} defined in $[p_i^0, p_i^f]$ yields the optimal position \eqref{eq:upper_p} for the unconstrained arc (Corollary \ref{cor:1}) in $[t_i^0, t_i^f]$, for all $i \in \mathcal{N}(t)$. Hence, for each CAV $i \in \mathcal{N}(t)$, the optimal control input of the low-level optimization can be derived directly by taking the second derivative of the inverse of \eqref{eq:upper_inversep} evaluated at $p_i^f$ using the optimal $\phi_i= (\phi_{i,3}, \phi_{i,2}, \phi_{i,1}, \phi_{i,0})\in\Phi_i$.
\end{corollary}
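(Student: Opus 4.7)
The plan is to assemble the corollary from previously established links in a chain: (i) Problem~\ref{problem2} admits an optimal $\phi_i^*$; (ii) this $\phi_i^*$ is in bijection with the coefficients $\omega_i^*$ of the inverse time trajectory; (iii) the inverse of that time trajectory is precisely the cubic position polynomial \eqref{eq:upper_p}; and (iv) the double derivative in time of that polynomial is the optimal low-level control. No new analysis is needed beyond verifying the compatibility of these pieces on the correct domains.

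First, I would invoke the previous existence theorem to guarantee that Problem~\ref{problem2}, which is convex by Propositions~\ref{pro:cost}, \ref{pro:eq}, and \ref{pro:ineq}, has a solution $\phi_i^*=(\phi_{i,3}^*,\phi_{i,2}^*,\phi_{i,1}^*,\phi_{i,0}^*)\in\Phi_i$. Since $f_i(\phi_i)=t_{p_i}(p_i^f)$ by construction \eqref{eq:path_f}, minimizing $f_i$ over $\Phi_i$ is identical to minimizing the value of \eqref{eq:upper_inversep} evaluated at $p_i^f$, which yields the minimum exit time $t_i^{f^*}$. Second, I would apply Lemma~\ref{lem:2} to map $\phi_i^*\mapsto \omega_i^*=(\omega_{i,3}^*,\omega_{i,2}^*,\omega_{i,1}^*,\omega_{i,0}^*)\in\Omega_i$ through the explicit algebraic formulas therein; since the map is a bijection between $\Phi_i$ and $\Omega_i$, the optimal $\omega_i^*$ is uniquely determined by $\phi_i^*$.

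Third, I would use Corollary~\ref{cor:1} to recall that $t_{p_i}^*(\cdot)$, defined by \eqref{eq:upper_inversep} with coefficients $\omega_i^*$, is the functional inverse of the cubic $p_i^*(t)$ of \eqref{eq:upper_p} with coefficients $\phi_i^*$. By Lemma~\ref{lem:1} this inverse is well defined on $[p_i^0,p_i^f]$ and, by Corollary~\ref{cor:3}, it is differentiable there. Hence taking the inverse of $t_{p_i}^*$ restricted to $[p_i^0,p_i^f]$ recovers $p_i^*(t)$ on $[t_i^0,t_i^{f^*}]$, which is precisely the optimal position profile of the unconstrained arc identified in Section~\ref{sec:3a}. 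The inequality constraints $g_i^{(1)}$--$g_i^{(7)}$ in \eqref{eq:con11} ensure, by construction of Problem~\ref{problem2}, that none of the state, control, or safety constraints activates, so that the unconstrained analytical solution from Section~\ref{sec:3a} is indeed the one realized.

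Finally, from the vehicle dynamics \eqref{eq:model2} we have $u_i^*(t)=\ddot{p}_i^*(t)$. Thus, differentiating $p_i^*(t)$ twice in $t$ (equivalently, differentiating twice the inverse of \eqref{eq:upper_inversep} restricted to $[p_i^0,p_i^f]$ and evaluating), I recover $u_i^*(t)=6\phi_{i,3}^*\,t+2\phi_{i,2}^*$ as in \eqref{eq:upper_u}, expressed entirely in terms of the upper-level optimizer $\phi_i^*$. The only subtle point I would be careful about is bookkeeping the roles of $\phi_i^*$ versus $\omega_i^*$ and the domain swap between $t$ and $p_i$: the ``second derivative of the inverse of \eqref{eq:upper_inversep}'' must be read as the second time derivative of the function $t\mapsto p_i^*(t)$ obtained by inverting $t_{p_i}^*$ on $[p_i^0,p_i^f]$, not as a second derivative with respect to $p_i$. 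With that reading clarified, the chain closes and the corollary follows directly from the preceding lemmas and corollaries, with no new computation required.
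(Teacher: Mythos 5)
Your proposal is correct and follows essentially the same route as the paper, which states Corollary~\ref{cor:framework} without a separate proof precisely because it is a chain of the preceding results (existence of the minimizer of $f_i$, the $\phi_i\leftrightarrow\omega_i$ correspondence of Lemma~\ref{lem:2}, the inverse-function relationship of Corollary~\ref{cor:1}, and $u_i^*=\ddot p_i^*$ from \eqref{eq:model2}); your expansion spells out exactly that chain. The one caveat worth noting is that existence of a solution to Problem~\ref{problem2} is only guaranteed under the condition of the second Theorem~\ref{theo:main} (it is not unconditional), but since the corollary is itself predicated on a solution existing, this does not affect the argument.
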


\begin{remark}\label{rem:condition}
	If the condition $w^*\le \liminf_{k\to\infty} w_k$ holds, it guarantees that the set $\Lambda_E$ does not contain any vertical lines (Lemma \ref{lem:vertical}). The physical interpretation of the condition is that there exists a time trajectory for a CAV to exit the control zone without activating any of the equality and inequality constraints.
\end{remark}

\begin{remark}\label{rem:llo_ulo}
	The decentralized framework presented here is implemented as follows. Every time a CAV $i\in\mathcal{N}(t)$ enters the control zone, it formulates and solves Problem \ref{problem2} by accessing the crossing protocol $\mathcal{I}(t)$.	The solution yields the optimal time trajectory $t_{p_i}(p_i)$ of CAV $i$, and as a result, the minimum time $t_i^f$ to exit the control zone. Problem \ref{problem2} is solved sequentially by each CAV that enters the control zone. Once the time trajectory of a CAV inside the control zone is derived, then it does not change. By inversing $t_{p_i}(p_i)$ and taking the second derivative, CAV $i$ obtains the optimal control input that corresponds to the unconstrained arc. Therefore, the solution of Problem \ref{problem2}, if it exists, guarantees that none of the state and control constraints becomes active in the low-level optimization. If, however, the solution of Problem \ref{problem2} does not exist, then CAV $i$ selects a feasible $t_i^f$ from the crossing protocol $\mathcal{I}(t)$, and follows the analysis of the low-level optimization, which includes piecing together the constrained and unconstrained arcs, to derive the optimal control input from $t_i^0$ to $t_i^f$.
\end{remark}

\section{Simulation Results}\label{sec:5}

In this section, we present simulation results to evaluate the analysis in the low-level and upper-level optimization.
First, we demonstrate the analysis of the low-level optimization with three case studies using two CAVs. We consider cases where the state and control constraints become active.
Second, we demonstrate the upper-level optimization analysis on a set of 10 and 24 CAVs at a four-way intersection. These cases include intersection-crossing, left, turns right turns, and lane changes.

\subsection{Low-Level Optimization} \label{sec:5a}
We simulate two CAVs that share a single lane within the control zone. The initial conditions of the CAVs are designed such that the state and control constraints become active on the lead CAV 1, while the rear-end safety constraint activates on the following CAV 2.
Initially, CAV 1 generates all possible trajectories which satisfy its boundary conditions.
Then, it selects the feasible trajectory which minimizes its total energy consumption.
CAV 2 applies the same process and verifies whether the resulting trajectory is feasible concerning the rear-end safety constraint. If this constraint is not satisfied, then CAV 2 must solve a boundary-value problem that satisfies the boundary, continuity, and optimality conditions. 
We provide three scenarios for this simulation case study. In the first scenario (Fig. \ref{fig:front-unconstrained}), CAV 1 follows an unconstrained trajectory. In the second scenario, the constraint $u_{\max}$ becomes active for CAV 1. In the third scenario, the constraint $v_{\max}$ is activated for CAV 1. In each case, CAV 1 starts with a much lower speed than CAV 2 to ensure the rear-end safety constraint is activated.

\begin{figure}[ht]
	\centering
	\includegraphics[width=0.8\linewidth]{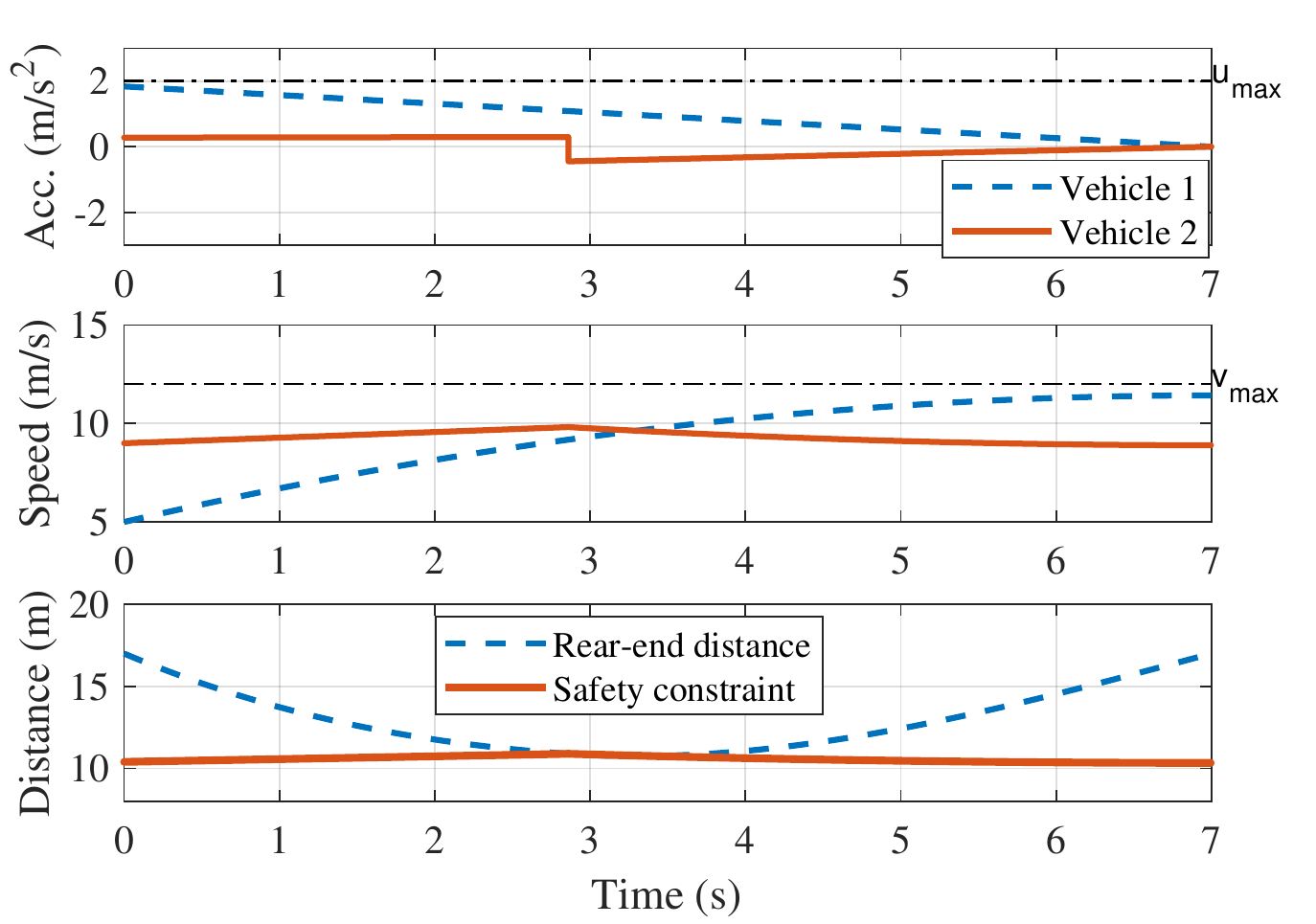}
	\caption{A state versus time graph for the case when CAV \#1 follows an unconstrained trajectory.}
	\label{fig:front-unconstrained}
\end{figure}

For the first scenario (Fig. \ref{fig:front-unconstrained}), CAV 1 follows an unconstrained control input, which is positive and decreasing. To prevent a rear-end collision, CAV 2 follows a small positive acceleration profile until the safety constraint becomes active. This constraint activation causes CAV 2 to jump into a new arc, which corresponds to a linear deceleration to zero (Fig. \ref{fig:front-unconstrained}). The jump in the control input of CAV 2 corresponds to a corner in the CAV's speed, as well as an instantaneous activation of the rear-end safety constraint.

\begin{figure}[ht]
	\centering
	\includegraphics[width=0.8\linewidth]{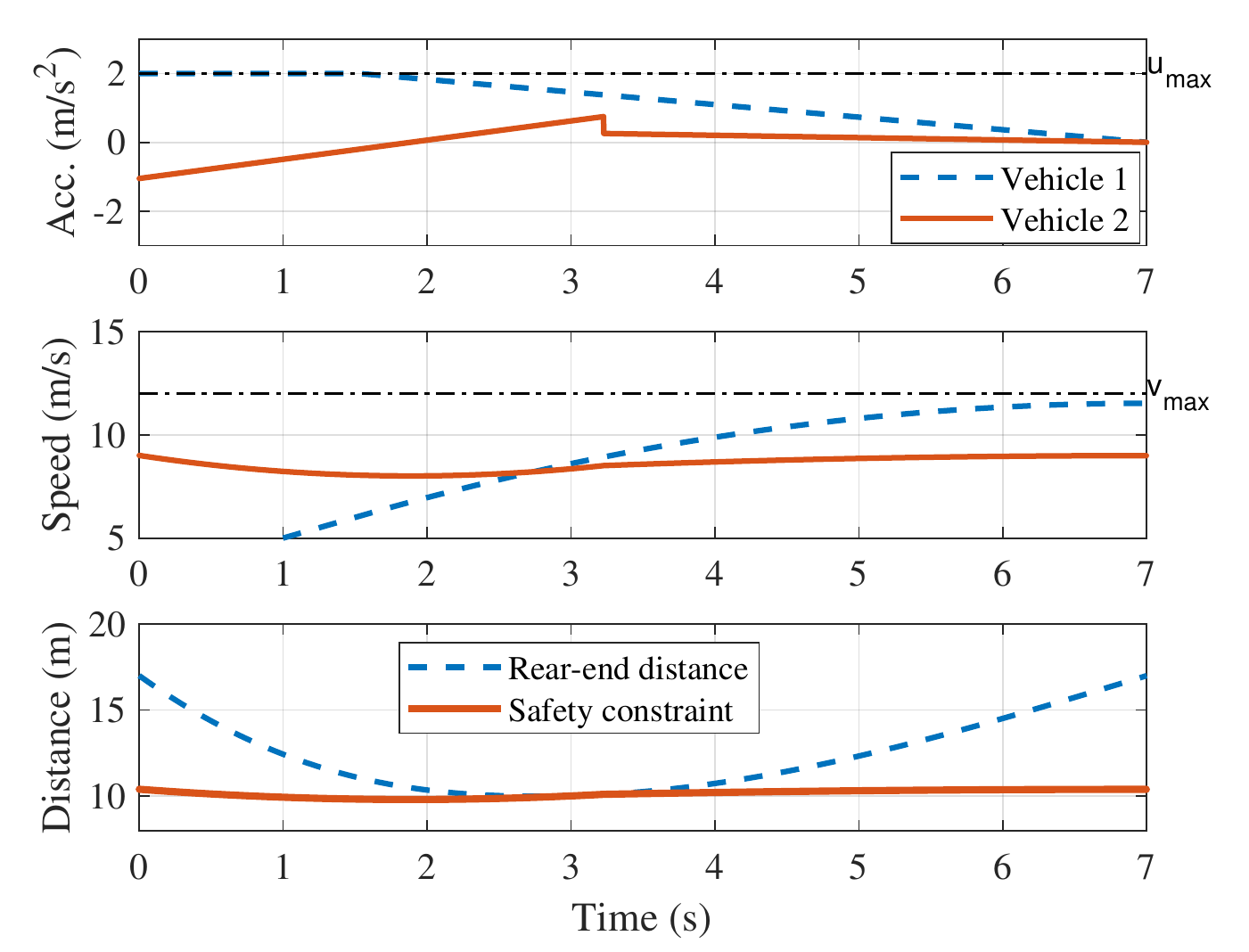}
	\caption{A state versus time graph for the case when $u_{\max}$ is active for CAV 1 over the first 1.3 s.}
	\label{fig:front-umax}
\end{figure}

In the second scenario (Fig. \ref{fig:front-umax}), CAV 1 begins with the $u_{\max}$ constraint active (we relax Assumption \ref{ass:active}) for the first 1.3 s. CAV 2 uses this time to increase its acceleration until the rear-end safety constraint becomes instantaneously active around $t = 3.2$ s. Then, CAV 2 slowly decelerates until it reaches the intersection at its prescribed time. 
In the absence of the rear-end safety constraint, CAV 2 would have followed a small linear acceleration profile, as opposed to the initial breaking behavior observed in Fig. \ref{fig:front-umax}.

\begin{figure}[ht]
	\centering
	\includegraphics[width=0.8\linewidth]{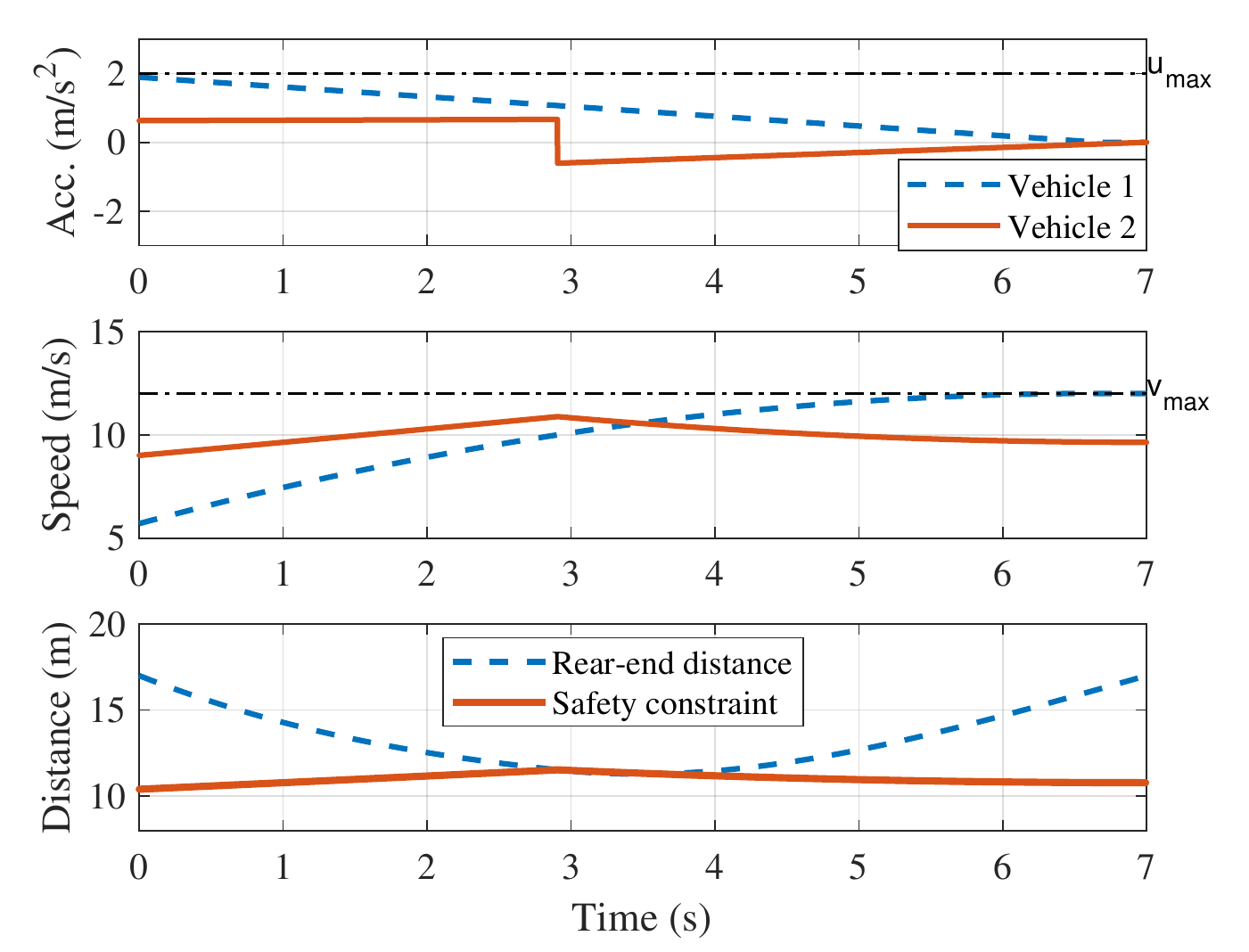}
	\caption{A state versus time graph for the case where $v_{\max}$ is active over the last 1.5 s for CAV 1.}
	\label{fig:front-vmax}
\end{figure}

Finally, in  the third scenario (Fig. \ref{fig:front-vmax}), CAV 1 activates the $v_{\max}$ constraint arc from $t = 6$ s until the terminal time. In this case, CAV 2 starts with some positive acceleration before jumping to a negative unconstrained arc. The jump occurs instantaneously when the rear-end safety constraint is activated. 
In each scenario, CAV 1 follows a trajectory with a piecewise-linear control input. The rear-end safety constraint determines the trajectory of CAV 2, which must follow two unconstrained arcs with an instantaneous jump where the safety constraint becomes active. The magnitude of this jump depends on the jump conditions of the influence functions.

\subsection{Upper-Level Optimization} \label{sec:5b}
To demonstrate the efficacy of the upper-level optimization, a simulation was run for $\mathcal{N}(t)= 24$ CAVs on an intersection shown in Fig. \ref{fig:intersection}. This scenario consists of 6 paths with 9 locations for potential lateral collisions. It also includes turning speed, state and control, and rear-end safety constraints.

\begin{figure}[ht]
	\centering
	\includegraphics[width=0.6\linewidth]{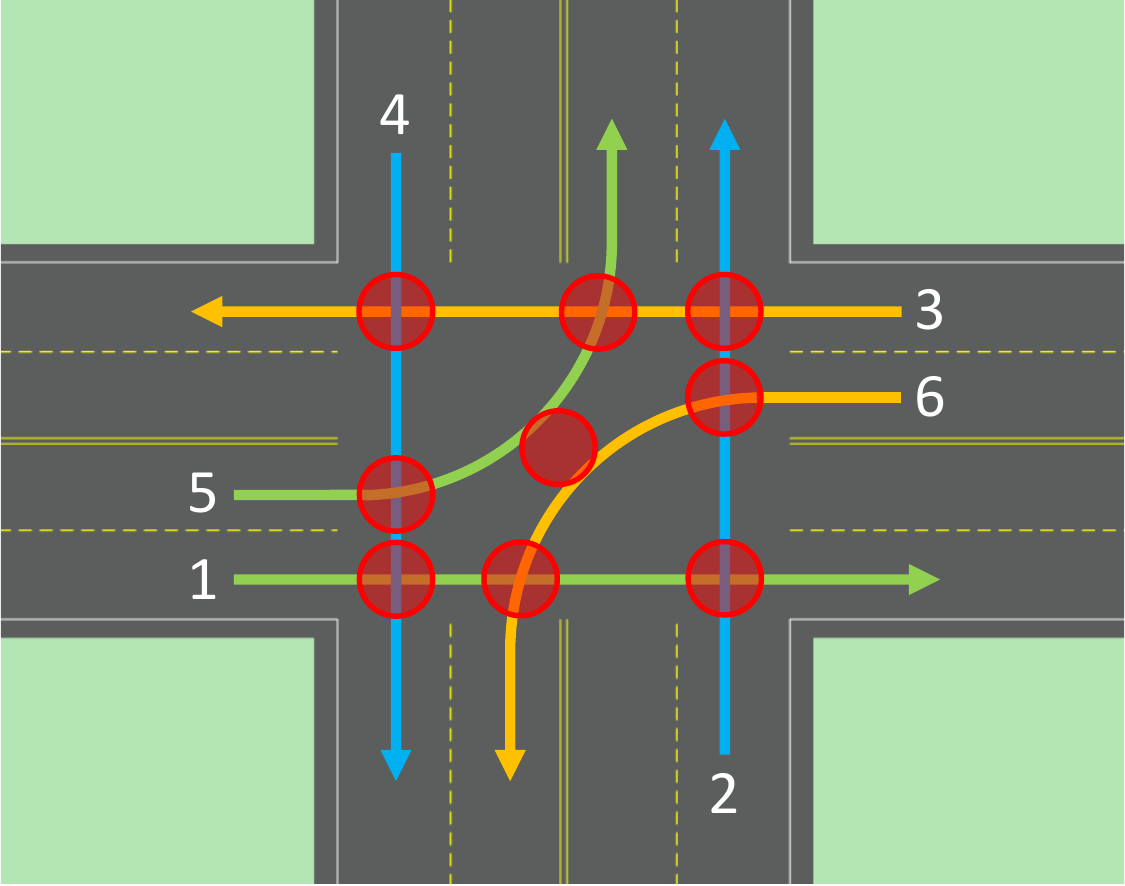}
	\caption{Diagram showing the 6 routes annotated over the intersection.}
	\label{fig:intersection}
\end{figure}

Each CAV enters at $p_i(t_i^0) = 0$ and follows a monotonically increasing trajectory to the final distance $p_i(t_i^f) = S_1$. The infeasible region caused by the rear-end safety constraint is shaded with a dashed line, while the vertical lines represent the lateral time headway constraint at each point along path 1.
A discussion about the implications of real-time implementation of the upper-level control algorithm in an experimental setting is discussed in \cite{chalaki2020experimental}.
Supplementary videos of the simulation and experimental results of the proposed framework as well as the parameters used for the simulation results can be found at: \textcolor{blue}{https://sites.google.com/view/ud-ids-lab/oppc}.


\section{Concluding Remarks and Discussion}\label{sec:6}

In this paper, we presented a decentralized theoretical framework, consisting of an upper-level and a low-level optimization, that aims at coordinating CAVs at different traffic scenarios. We provided a complete, analytical solution of the low-level optimization problem that includes the rear-end safety constraint, where the safe distance is a function of speed. We also provided a problem formulation for the upper-level optimization in which there is no duality gap, implying that the optimal time trajectory for each CAV does not activate any of the state, control, and safety constraints of the low-level optimization,  thus allowing for online implementation. Finally, we presented a geometric duality framework with hyperplanes to derive the condition under which the solution of the upper-level optimization always exists.

In our framework, we considered 100\% penetration rate of CAVs having access to perfect information (no errors or delays) which both impose limitations for real-world applications. It is expected that CAVs will gradually penetrate the market, interact with non-CAVs and contend with vehicle-to-vehicle and vehicle-to-infrastructure communication limitations, e.g., bandwidth, dropouts, errors and/or delays. Although some recent studies have explored the implications of partial CAV penetration rates, see \citet{Malikopoulos2018d, Zhao2018CTA, cassandras2019b}, no system approaches to date have reported in the literature to optimally coordinate CAVs at different penetration rates. Ongoing research focusing on addressing partial penetration rates of CAVs relying upon on-board sensing and overcoming real-world communication limitations. A direction for future research should extend the proposed framework to consider passengers' comfort in addition to energy efficiency and safety.


\bibliographystyle{abbrvnat}
\bibliography{TAC_references, IDS_Publications}

\end{document}